\documentclass[12pt, reqno]{amsart}
\usepackage{amsmath, amsthm, amscd, amsfonts, amssymb, graphicx, color, mathrsfs}
\usepackage[bookmarksnumbered, colorlinks, plainpages]{hyperref}

\usepackage{cancel}
\newtheorem{theorem}{Theorem}[section]
\newtheorem{lemma}[theorem]{Lemma}
\newtheorem{proposition}[theorem]{Proposition}
\newtheorem{corollary}[theorem]{Corollary}
\theoremstyle{definition}
\newtheorem{definition}[theorem]{Definition}

\theoremstyle{remark}
\newtheorem{remark}[theorem]{Remark}
\numberwithin{equation}{section}

\newcommand*\diff{\mathop{}\!\mathrm{d}}

\usepackage{bbm}

\newcounter{casenum}

\begin{document}
\setcounter{page}{1}

\title[Weak (1,1)-Boundedness of operators on compact Lie groups]{Kernel estimates and weak (1,1)-boundedness of pseudo-differential operators on compact Lie groups}

 \author[D. Cardona]{Duv\'an Cardona}
\address{
 Duv\'an Cardona:
  \endgraf
  Department of Mathematics: Analysis, Logic and Discrete Mathematics
  \endgraf
  Ghent University, Belgium
  \endgraf
  {\it E-mail address} {\rm duvan.cardonasanchez@ugent.be}
  }

  \author[R. Yeghoyan]{Rafik Yeghoyan}
\address{
 Rafik Yeghoyan:
  \endgraf
  Department of Mathematics: Analysis, Logic and Discrete Mathematics
  \endgraf
  Ghent University, Belgium
  \endgraf
  {\it E-mail address} {\rm rafik.yeghoyan@ugent.be}
  }

\author[M. Ruzhansky]{Michael Ruzhansky}
\address{
  Michael Ruzhansky:
  \endgraf
  Department of Mathematics: Analysis, Logic and Discrete Mathematics
  \endgraf
  Ghent University, Belgium
  \endgraf
 and
  \endgraf
  School of Mathematical Sciences
  \endgraf
  Queen Mary University of London
  \endgraf
  United Kingdom
  \endgraf
  {\it E-mail address} {\rm michael.ruzhansky@ugent.be}
  }

 \allowdisplaybreaks

\subjclass[2010]{Primary {22E30; Secondary 58J40}.}

\keywords{Pseudo-differential operator, compact Lie groups, Mapping properties, weak (1,1) inequality}

\subjclass[2010]{}

\keywords{Compact Lie groups, pseudo-differential operators, weak-type (1,1) bound}
\thanks{ The authors are supported  by the FWO  Odysseus  1  grant  G.0H94.18N:  Analysis  and  Partial Differential Equations and by the Methusalem programme of the Ghent University Special Research Fund (BOF)
(Grant number 01M01021). Michael Ruzhansky is supported by the FWO Senior Research Grant G011522N and by the EPSRC grant UKRI3645. Duv\'an Cardona was supported by the Research Foundation-Flanders
(FWO) under the postdoctoral grant No 1204824N}
\date{2025}

\begin{abstract} 
Given a compact Lie group \( G \) and its unitary dual $\widehat{G}$, we establish the weak (1,1) continuity for pseudo-differential operators in the global H\"ormander classes of order $-n(1-\rho)/2$ on $G\times \widehat{G}$. Our approach consists of proving suitable estimates for the kernel of such operators. Furthermore, we use these kernel estimates to give an alternative proof for the $H^1(G)$-$L^1(G)$-continuity of these classes now allowing the full range $0\leq\delta\leq\rho\leq1, \;\rho\neq0,\;\delta\neq1$. The conditions for the operators are formulated using the H\"ormander classes $S^m_{\rho,\delta}(G):=S^m_{\rho,\delta}(G\times \widehat{G})$ of symbols in the non-commutative phase space $G\times \widehat{G}$, which are extensions of the well-known $(\rho,\delta)$-classes in the Euclidean space. Our results are formulated in the complete range $0\leq \delta\leq \rho\leq 1,$ $\rho\neq0,\;$$\delta\neq 1$. 

As an application of this boundedness result we provide {\it end-point a-priori $L^1$-estimates} for the sub-Laplacian $\mathcal{L}_{sub}=X^2+Y^2,$ and for the heat type operator $T=Z-X^2-Y^2$  on $SU(2)\cong \mathbb{S}^3$ that cannot be obtained by application of the standard pseudo-differential calculus due to H\"ormander. More precisely, we prove that if one considers the subelliptic problem,
    \begin{equation}\label{IVP:abstract} \begin{cases}Tu=f ,& \text{ }
\\u,f\in \mathscr{D}'(SU(2)):=(C^\infty(SU(2)))', & \text{ } \end{cases}
\end{equation} then, for $f\in W^{1,-\frac{1}{4}}(SU(2)),$ one has that $u\in L^{1,\infty}(SU(2)).$
\end{abstract} \maketitle

\tableofcontents
\allowdisplaybreaks
\section{Introduction}

\subsection{Overview} The mathematical research regarding the boundedness of pseudo-differential operators is a fundamental topic of the harmonic analysis after pioneering works on the subject of, e.g., H\"ormander \cite{13,14} and Fefferman \cite{11}. Indeed, it plays an important role in applications to the well-posedness and stability properties of partial differential equations on $\mathbb{R}^n$ and other Riemannian manifolds. Focused on the $L^1$-theory,  this paper studies the weak (1,1) continuity of pseudo-differential operators on an arbitrary compact Lie group $G$. This question goes back to the work \cite{9}.  We also provide alternative proofs for the boundedness of pseudo-differential operators from the Hardy space $H^1(G)$ to $L^1(G)$, from $L^\infty(G)$ to $BMO(G)$, and from $L^p(G)$ to itself. These results were previously established in \cite{9} and for subelliptic pseudo-differential operators in \cite{CR}. Our arguments rely on kernel estimates and are therefore considerably simpler, also allowing us to obtain the weak $(1,1)$ results.

Here, we recall that the classes $S^m_{\rho,\delta}(G)$ on compact Lie groups have been introduced by the third author and Turunen in \cite{17}, with full generality on the parameters $\rho$ and $\delta$ by satisfying $0\leq \delta\leq \rho\leq 1,$ $\rho\neq 0$,\;$\delta\neq 1.$ We denote by $\Psi^m_{\rho,\delta}(G)$ the class of operators with symbols in the class $S^m_{\rho,\delta}(G)$. We would like to note that when we understand $G$ as a compact manifold without boundary, symbol classes can be defined using local coordinate systems in terms of the classical classes $S^m_{\rho,\delta}(\mathbb{R}^n\times\mathbb{R}^n)$ introduced by H\"ormander \cite{13,14}. However, on compact manifolds we have to impose the restriction $\delta<\rho$, and $\rho\geq 1-\delta$. The global quantisation of pseudo-differential operators on compact Lie groups in \cite{17} is introduced by relying on the representation theory of the group rather than on expressions in local coordinates. In Section \ref{Section:2} we give a short overview about the global notion of a symbol on compact Lie groups. 

In our approach, the continuity of pseudo-differential operators follows from suitable kernel estimates combined  with the $L^2$-estimates \cite{12} as established in the Euclidean setting.  In $\mathbb{R}^n,$ a classical boundedness result due to Charles Fefferman \cite{11} established the $L^p$-continuity of pseudo-differential operators with symbols in the H\"ormander classes $\Psi^m_{\rho,\delta}(\mathbb{R}^n)$ for  $\delta<\rho$. However, this estimate was extended e.g. by Alvarez and Hounie in \cite{1} allowing the case where $0<\rho\leq 1$, $0\leq\delta<1$, and a suitable order of the operator depending on these parameters. Their results can be  extended to general compact manifolds without boundary for $\rho\geq \max\{\delta,1-\delta\}$.  

 In compact Lie groups, Delgado and the third author addressed this problem in \cite{9}, investigating the $L^p$-boundedness, the continuity from $L^\infty(G) $ to $BMO(G)$, and from $H^1(G)$ to $L^1(G)$, of pseudo-differential operators when $\delta<\rho$. According to the analysis in  \cite{9}, the condition $\rho\geq 1-\delta$ was not required. As it was mentioned before, these results were extended in \cite{CR} for $\delta\leq \rho,$ for subelliptic pseudo-differential operators. Our article aims to prove the weak (1,1) inequality for H\"ormander classes on compact Lie groups in the range $0\leq \delta\leq \rho\leq 1,$ $\rho\neq 0$,\;$\delta\neq 1$. In other words, we want to investigate the weak (1,1) continuity for these classes.
For recent results in the case of the torus, we refer to \cite{Cardona2014,Cardona:Martinez}, with the results in \cite{Cardona:Martinez} admitting sharp criteria for the order in terms of $\rho,\delta\in [0,1]$ and of $\lambda=\max\{0,\frac{\delta-\rho}{2}\}$, where $\lambda$ represents the sharp correction to the operator’s order needed for weak (1,1)-continuity.
 
 \subsection{Methodology} In order 
to provide an idea of our approach regarding the weak (1,1) continuity property, it is interesting to recall the argument by Álvarez and Milman that obtains a weak $(1,1)$ bound for Calderón-Zygmund operators, see \cite[Theorem 4.1]{2}. Given $f\in L^1(\mathbb{R}^n)$ and $\alpha>0$ arbitrarily, using the Calderón–Zygmund decomposition, we can write $f=g+b$. Here $b=\sum_{j=1}^{\infty}b_j$, where the $b_j$'s are supported on disjoint dyadic cubes $I_j$'s such that 
\begin{equation*}
    \frac{1}{|I_j|}\int_{I_j}|b_j(x)|\diff{x}\sim\alpha,\hspace{5pt}\int_{I_j}b_j(x)\diff{x}=0.
\end{equation*}
Then we can estimate
\begin{eqnarray*}
     &|\{x\in\mathbb{R}^n:|Tf(x)|>\alpha\}|\leq|\{x\in\mathbb{R}^n:|Tg(x)|>\alpha/2\}|\\
     &+|\{x\in\mathbb{R}^n:|Tb(x)|>\alpha/2\}|.
\end{eqnarray*}
The part regarding $T(g)$ can easily be estimated using the fact that $g\in L^2(\mathbb{R}^n)$ and the extension of $T$ to $L^2(\mathbb{R}^n)$. The part involving $T(b)$ is where most of the work is done. First we further decompose $b=F'+F''$, the estimation for $TF''$ follows from the condition that one has on the kernel of such operators. For $TF'$ we can use Fefferman's argument developed in \cite{10}, where the idea is to show that
\begin{equation*}
    \|J^\beta F'\|^{2}_{L^2}\lesssim\alpha\|f\|_{L^1}.
\end{equation*}
Here $J^\beta$ denotes the Bessel potential of order $\beta$. Using the $L^2$ continuity of $T$, we see that
\begin{equation*}
    \|TF'\|^2_{L^2}=\|TJ^{-\beta}J^\beta F'\|^{2}_{L^2}\lesssim\|J^\beta F'\|^{2}_{L^2}\lesssim\alpha\|f\|_{L^1}.
\end{equation*}
The result follows from an application of Chebyshev's theorem. We will adopt a similar approach, but in the context of compact Lie groups. For extension of the methods of \'Alvarez and Hounie \cite{1} and \'Alvarez and Milman \cite{2} to the torus $\mathbb{T}^n\sim \mathbb{R}^n/\mathbb{Z}^n $ we refer to \cite{Cardona:Martinez}.
\subsection{Main results}
  Here we present our main theorems. We begin with a key kernel estimate, which will play a central role in the proof of weak (1,1) continuity. This estimate is stated in the form of the following result where the main part is the estimate for $R<1$. 
 \begin{theorem}\label{kernel_estimates}
    Let $G$ be a compact Lie group of topological dimension $n\geq 1$. Consider $T\in\Psi^{m}_{\rho,\delta}(G)$, with \[
m \le -\frac{n}{2}(1-\rho),
\qquad 0 \le \delta \le \rho \le 1,\ \rho\neq0,\;\delta \neq 1.
\]
For $R>0$, we have that the kernel of $T$ satisfies the following. For $y\in\overline{B(z, R)}$ we have
    \begin{itemize}
        \item $R\geq 1$ then
        $\int_{B(z, 2R)^c}|K(x,y)-K(x,z)|\diff{x}\leq C$;
         \item $R<1$ then $\int_{B(z, 2R^{\rho})^c}|K(x,y)-K(x,z)|\diff{x}\leq C$;
    \end{itemize}
    Here the constant $C>0,$ in the above estimates is independent of $R>0.$
\end{theorem}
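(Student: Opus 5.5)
We argue as in the Euclidean proof of Álvarez and Hounie, adapted to the global quantisation on $G$. Write $T f(x)=\int_G K(x,y)f(y)\,dy$, where $K(x,y)=k_x(y^{-1}x)$ and $k_x=\mathscr{F}^{-1}\sigma(x,\cdot)$ is the right-convolution kernel of the symbol $\sigma\in S^m_{\rho,\delta}(G)$. Fix a dyadic partition of unity $\psi_j$ on the spectrum of $\mathcal{L}_G$ (so $\psi_j(\xi)$ is supported where $\langle\xi\rangle\sim 2^j$), and decompose $\sigma=\sum_{j\ge0}\sigma_j$ with $\sigma_j(x,\xi)=\sigma(x,\xi)\psi_j(\langle\xi\rangle)$. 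Let $k_{x,j}$ be the corresponding kernels, so that $K(x,y)-K(x,z)=\sum_j\bigl(k_{x,j}(y^{-1}x)-k_{x,j}(z^{-1}x)\bigr)$. By left invariance of the Haar measure and the bi-invariance of the distance, it suffices to bound each dyadic piece in $L^1$ over the relevant complement of a ball and then sum in $j$.

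\textbf{Two estimates for each piece.} For each $j$ we prove two bounds.
\begin{enumerate}
\item[(a)] \emph{Weighted $L^2$ bound.} Use Plancherel on $G$ together with the difference operators $\Delta_q$ associated with a strongly admissible family $q$ satisfying $\sum q(x)^2\sim|x|^2$ near $e$. Multiplying $k_{x,j}$ by $|x|^{N}$ corresponds to applying $N$ differences to $\sigma_j$, and the symbol inequalities give $\|\Delta^\alpha\sigma_j\|_{op}\lesssim 2^{j(m-\rho|\alpha|)}$. Summing over the roughly $2^{jn}$ dimensions gives
\begin{equation*}
\| |\cdot|^{N} k_{x,j}\|_{L^2(G)}\lesssim 2^{j(m-\rho N+n/2)} .
\end{equation*}
By Cauchy--Schwarz on $\{|u|\ge r\}$, with $N>n/2$, this yields
\begin{equation*}
\int_{|u|\ge r}|k_{x,j}(u)|\,du\lesssim 2^{j(m+n/2-\rho N)}\,r^{n/2-N}.
\end{equation*}
\item[(b)] \emph{Mean-value bound.} Write $z^{-1}x$ and $y^{-1}x$ as being related through $w=z^{-1}y$, where $|w|\le R$. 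A Taylor expansion along left-invariant vector fields then gives a difference controlled by $R\sup|Xk_{x,j}|$. Applying $X$ multiplies the symbol by $\xi$, costing a factor $2^{j}$, so the analogue of (a) with an extra factor $R2^j$ is
\begin{equation*}
\int_{|u|\ge r}|k_{x,j}(w^{-1}u)-k_{x,j}(u)|\,du\lesssim R\,2^{j}\,2^{j(m+n/2-\rho N)}\,r^{n/2-N}.
\end{equation*}
The same bound with $N=0$ holds on all of $G$ after using compactness, $|G|<\infty$.
\end{enumerate}
The $x$-dependence is harmless. We take the integral over $x$ in the fibre variable and use $\sup_x$ bounds, since the estimates above are uniform in $x$ and $\delta<1$ does not enter at this stage (only the $\xi$-differences matter).

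\textbf{Summation.} Take $r=R^\rho$ with $R<1$ (this is the main case), and note $m+n/2\le n\rho/2$.
\begin{itemize}
\item For $2^j\le R^{-1}$, use (b) with $N=0$ on the whole group. This gives $R2^{j(1+m+n/2)}$, and the $L^2$-to-$L^1$ passage costs nothing here because $|G|<\infty$. That is not quite enough, so instead we use (b) with weights, pairing $N=0$ and $N>n/2$ by interpolating via Cauchy--Schwarz on $\{|u|\ge R^\rho\}$. The resulting terms are $R\,2^{j}\,2^{j(m+n/2)}(2^{j\rho}R^{\rho})^{n/2-N}$, whose geometric sum up to $2^j\sim R^{-1}$ is $O(1)$ precisely when $m\le -\frac n2(1-\rho)$.
\item For $2^j>R^{-1}$, use (a) for each term separately. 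This gives $2^{j(m+n/2)}(2^{j\rho}R^\rho)^{n/2-N}\lesssim 2^{j\rho n/2}(2^jR)^{\rho(n/2-N)}$. Here the factor $2^{j\rho n/2}$ is the problem, so we must split more carefully, at $2^{j}\sim R^{-1}$ versus $2^{j\rho}\sim R^{-\rho}$.
\end{itemize}
The real threshold is $2^{j\rho}R^\rho\gtrless1$, i.e.\ $2^j\gtrless R^{-1}$, and we keep the exponent bookkeeping uniform: $\sum_j 2^{j\rho n/2}(2^j R)^{\rho(n/2-N)}$ is bounded by $R^{-\rho n/2}$ times a convergent sum. This loss is exactly what must be absorbed. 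I expect this bookkeeping to be the main obstacle. The likely fix is to use the sharp $L^2$ bound $\|k_{x,j}\|_{L^2}\lesssim 2^{j(m+n/2)}$ in combination with the $L^2$-measure $|B(R^\rho)|^{1/2}\sim R^{\rho n/2}$ on the inner region. With that, the critical order $m=-\frac n2(1-\rho)$ gives the bounded sum.

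\textbf{The case $R\ge1$.} This case is easier. Since $G$ is compact, $B(z,2R)^c$ is empty once $R$ exceeds half the diameter of $G$. For $R$ in a bounded range we use the estimate at $R=1$, which follows from the $R<1$ case up to constants.
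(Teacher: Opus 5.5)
\textbf{Gap: the summation is never closed.} Your skeleton is the right one: dyadic pieces $\sigma_j$, the tail bound (a), the mean-value bound (b), and a split at $2^j\sim R^{-1}$. It parallels the paper's decomposition $K=\int_1^\infty K_t\,\frac{dt}{t}$ split at $t=1/R$. But the proposal ends with an uncompensated factor $R^{-\rho n/2}$. The suggested remedy cannot work here, because $B(z,2R^\rho)$ is excluded from the domain of integration and its measure never enters.

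The loss comes from an arithmetic slip. You rewrote $2^{-j\rho N}R^{\rho(\frac n2-N)}$ as $(2^{j\rho}R^{\rho})^{\frac n2-N}$, which drops a factor $2^{-j\rho n/2}$. Correctly, (a) with $r=R^\rho$ gives
\[
2^{j(m+\frac n2-\rho N)}R^{\rho(\frac n2-N)}=2^{j(m+\frac n2-\frac{\rho n}{2})}\,(2^jR)^{\rho(\frac n2-N)}\le(2^jR)^{\rho(\frac n2-N)},
\]
because $m+\frac n2\le\frac{\rho n}{2}$. So the range $2^j>R^{-1}$ is a convergent geometric series with no loss at all; this is exactly the paper's bound for $II$.

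In the range $2^j\le R^{-1}$, the corrected form of your weighted expression is $(2^jR)^{1+\rho(\frac n2-N)}$. This is summable only if $N<\frac n2+\frac1\rho$, which in general forces a fractional $N$. The constraint is on $N$, not ``precisely'' on $m$.

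It is cleaner to measure these pieces at their natural scale $2^{-j\rho}$ rather than $R^\rho$. Apply Cauchy--Schwarz over all of $G$ with the weights $(1+2^{2j\rho}|u|^2)^{\pm N}$, $N>\frac n2$; this is the weight the paper uses for $I$. Combined with (b), where the weights at $u$ and $w^{-1}u$ are comparable since $2^{j\rho}R\le1$, this gives
\[
\int_G|k_{x,j}(w^{-1}u)-k_{x,j}(u)|\,du\lesssim 2^{-j\rho n/2}\,R\,2^{j}\,2^{j(m+\frac n2)}\le R\,2^{j},
\]
and $\sum_{2^j\le R^{-1}}R\,2^j\lesssim1$.

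\textbf{Comparison with the paper.} This use of cancellation is where your route genuinely differs from the paper's. For $t\le1/R$ the paper bounds $K_t(x,y)$ and $K_t(x,z)$ separately, relying on the decay $\|\Delta_q^\alpha\sigma_t\|_{op}\le C_rt^r$ for every $r$ asserted in Lemma \ref{smoothing}. The symbol estimates only give $t^{m-\rho|\alpha|}$; for $\alpha=0$, $\|\sigma_t\|_{op}$ is typically of size $t^m$. With that bound the separate estimate costs $\int_1^{1/R}\frac{dt}{t}=\log\frac1R$. Your (b) is the ingredient that actually makes the low frequencies bounded.

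\textbf{The $x$-dependence.} The claim that it is harmless via $\sup_x$ bounds, with $\delta$ playing no role, does not hold as stated. In $\int|k_{x,j}(y^{-1}x)|\,dx$ the variable $x$ occupies both slots. Bounds uniform in a frozen $x$ do not by themselves control this integral, since passing the supremum inside costs $x$-derivatives, each worth $2^{j\delta}$.

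What works is the following. View $x\mapsto q^\alpha(y^{-1}x)K_j(x,y)$ as $\operatorname{Op}(\Delta_q^\alpha\sigma_j)$ applied to the left translate $\Phi_j(y^{-1}\,\cdot\,)$. Here $\widehat{\Phi_j}(\zeta)=\tilde\psi_j(\langle\zeta\rangle)I_{d_\zeta}$, with $\tilde\psi_j\equiv1$ on the slightly enlarged dyadic shell carrying $\Delta_q^\alpha\sigma_j$, so that $\|\Phi_j\|_{L^2}\sim2^{jn/2}$. Then apply Proposition \ref{L2} to $2^{-j(m-\rho|\alpha|)}\Delta_q^\alpha\sigma_j$, uniformly in $j$. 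This is where $\delta\le\rho$ and $\delta\neq1$ enter, and the same device handles (b).

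\textbf{The case $R\ge1$.} The case $R<1$ does not cover $1\le d(y,z)\le R$. Instead, bound $\int_{d(x,y)\ge1}|K(x,y)|\,dx$ and $\int_{d(x,z)\ge2}|K(x,z)|\,dx$ separately by summing (a) with $r=1$, or pointwise as the paper does.
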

As an application of Theorem \ref{kernel_estimates} we present the following weak (1,1) estimate.
\begin{theorem}\label{weak}
    Let $G$ be a compact Lie group of topological dimension $n\geq 1$. Let $T\in\Psi^{m}_{\rho,\delta}(G)$, with \[
m \le -\frac{n}{2}(1-\rho),
\qquad 0 \le \delta \le \rho \le 1,\ \rho\neq0,\;\delta \neq 1.
\]
Then, $T$ is a continuous mapping from $L^1(G)$ to $L^{1,\infty}(G)$.
\end{theorem}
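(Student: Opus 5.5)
We prove Theorem \ref{weak} by a Calder\'on--Zygmund decomposition adapted to $G$, in the spirit of the \'Alvarez--Milman/Fefferman scheme recalled in the Methodology. We use three ingredients. First, $G$ with its Riemannian distance and Haar measure is a space of homogeneous type, so a Calder\'on--Zygmund decomposition is available. Second, the kernel estimate of Theorem \ref{kernel_estimates}. Third, $L^2$-boundedness of $\Psi^{0}_{\rho,\delta}(G)$ for $0\le\delta\le\rho\le1$, $\delta\neq1$ (Calder\'on--Vaillancourt type, as in \cite{12,17}).

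Fix $f\in L^1(G)$ and $\alpha>0$. If $\alpha\lesssim \|f\|_{L^1}$ the bound $|\{|Tf|>\alpha\}|\le |G|\lesssim \|f\|_{L^1}/\alpha$ is trivial, so we assume $\alpha\gg\|f\|_{L^1}/|G|$. Decompose $f=g+b$, $b=\sum_j b_j$, where each $b_j$ is supported in a ball $B_j=B(z_j,r_j)$ with $r_j$ small, $\int b_j=0$, $\|b_j\|_{L^1}\lesssim \alpha|B_j|$, $\sum_j|B_j|\lesssim \|f\|_{L^1}/\alpha$, the balls have bounded overlap, and $\|g\|_\infty\lesssim\alpha$, $\|g\|_{L^1}\le\|f\|_{L^1}$. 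Since $m\le0$, $T$ is $L^2$-bounded, and Chebyshev gives $|\{|Tg|>\alpha/3\}|\lesssim \alpha^{-2}\|g\|_2^2\lesssim \|f\|_{L^1}/\alpha$.

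For the bad part, we split by the size of $r_j$. When $\rho=1$, set $F''=b$ and $F'=0$. Otherwise, let $F''=\sum_{r_j\ge1}b_j+\sum_{r_j<1}b_j$ with the exceptional set $E=\bigcup_j B(z_j,2\tilde r_j)$, where $\tilde r_j=r_j$ if $r_j\ge1$ and $\tilde r_j=r_j^{\rho}$ if $r_j<1$. The natural exceptional set $\bigcup_j B(z_j,2r_j^\rho)$ may have measure much larger than $\sum_j|B_j|$, because $r_j^{\rho n}\gg r_j^n$ when $r_j<1$ and $\rho<1$. This is exactly why the Fefferman piece $F'$ is needed. We follow \'Alvarez--Milman: for $r_j<1$ replace $b_j$ by $b_j=(b_j-b_j*\phi_j)+b_j*\phi_j$, where $\phi_j$ is a smooth approximate identity on $G$ at scale $r_j^{\rho}$. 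Alternatively, split $T=TJ^{-\beta}J^{\beta}$ with $J^\beta=(1-\mathcal L_G)^{\beta/2}$ and $\beta=-m$. We then aim for two estimates:
\begin{itemize}
\item[(a)] the piece $F'$ satisfies $\|J^{\beta}F'\|_{L^2}^2\lesssim \alpha\|f\|_{L^1}$ with $\beta=\tfrac n2(1-\rho)$;
\item[(b)] the piece $F''$ is handled off $E^*=\bigcup_j B(z_j,2r_j)$, whose measure is $\lesssim\|f\|_{L^1}/\alpha$, by Theorem \ref{kernel_estimates}.
\end{itemize}
For (b) we use the cancellation of $b_j$:
\[
\int_{B(z_j,2\tilde r_j)^c}|Tb_j|\le \int_{B_j}|b_j(y)|\int_{B(z_j,2\tilde r_j)^c}|K(x,y)-K(x,z_j)|\,dx\,dy\le C\|b_j\|_{L^1}.
\]
Summing over $j$ and applying Chebyshev gives the bound off the enlarged balls. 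For (a), $TJ^{\beta}\in\Psi^{m+\beta}_{\rho,\delta}\subset\Psi^0_{\rho,\delta}$ is $L^2$-bounded, so $\|TF'\|_2^2\lesssim\|J^\beta F'\|_2^2\lesssim\alpha\|f\|_1$, and Chebyshev finishes.

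The main obstacle is reconciling the enlarged balls $B(z_j,2r_j^\rho)$ with the measure budget, that is, proving (a). Our plan is Fefferman's argument transplanted via the Fourier series on $G$. For each small ball we write $J^\beta b_j$ through its spectral localisation at frequencies $\langle\xi\rangle\lesssim r_j^{-1}$, and we estimate
\[
\|J^\beta(b_j*\phi_j)\|_2^2\lesssim r_j^{-2\beta\cdot?}\|b_j\|_1^2\,|B(z_j,r_j^\rho)|^{-1}.
\]
We choose $\phi_j$ so that $\|b_j*\phi_j\|_\infty\lesssim \alpha r_j^{n(1-\rho)}$, and the $J^\beta$ cost is $r_j^{-\rho\beta}$-ish. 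The exponents should balance precisely when $\beta=\tfrac n2(1-\rho)$. The cross terms between different $j$ are controlled by almost-orthogonality of the mollified pieces together with the bounded overlap from the decomposition. The remaining part $b_j-b_j*\phi_j$ is supported essentially in $B(z_j,2r_j^\rho)$ and carries mean zero at scale $r_j^\rho$. We treat it as in (b), using the $R<1$ case of Theorem \ref{kernel_estimates}, but outside $\bigcup_j B(z_j,2r_j)$ after a further dyadic decomposition.

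If this balancing fails, the fallback is to pass through the $H^1$ route: show that the pieces at scale $r_j^\rho$ form $H^1$-molecules, and use the $H^1\to L^1$ boundedness from the kernel estimate.
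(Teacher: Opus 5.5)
There is a genuine gap, and it is the choice of mollification scale. You take $\phi_j$ at scale $r_j^{\rho}$, which for $r_j<1$ and $\rho<1$ is \emph{larger} than $r_j$. For the difference piece one writes
$T(b_j-b_j*\phi_j)(x)=\int_G\int_G (K(x,y)-K(x,z))\,\phi_j(y^{-1}z)\,dz\,b_j(y)\,dy$,
so $y$ and $z$ are at distance up to $r_j^{\rho}$. The $R<1$ case of Theorem \ref{kernel_estimates} with $R\approx r_j^{\rho}$ then controls the $x$-integral only outside $B(z,2r_j^{\rho^2})$. That ball is even larger than the $B(z_j,2r_j^{\rho})$ you yourself flagged as over budget. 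A ``further dyadic decomposition'' cannot fix this: any piece whose cancellation is at a scale $s>r_j^{1/\rho}$ is controlled by Theorem \ref{kernel_estimates} only outside $B(z,2s^{\rho})$, and $s^{\rho}>r_j$.

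The missing idea, which is Fefferman's and the one the paper uses, is to mollify at the \emph{smaller} scale $r_j^{1/\rho}$. Take $\phi_j=|B(e,c\,r_j^{1/\rho})|^{-1}1_{B(e,c\,r_j^{1/\rho})}$. The kernel estimate with $R=c\,r_j^{1/\rho}$ has $2R^{\rho}\le 2r_j$. Hence $F''=\sum_{r_j<1}(b_j-b_j*\phi_j)$ is controlled off $\Omega^*=\bigcup_j B(x_j,k r_j)$, whose measure is $\lesssim\|f\|_{L^1}/\alpha$. The cost moves to $F'=\sum_{r_j<1}b_j*\phi_j$, which is now rough at scale $r_j^{1/\rho}$, and this is exactly where the smoothing enters. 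With $k$ the kernel of $J^{-\frac n2(1-\rho)}$, one has $\|\phi_j*k\|_{L^2}^2\lesssim (r_j^{1/\rho})^{-n\rho}=r_j^{-n}\approx|I_j|^{-1}$, which is the balance you were guessing at. With your scale, by contrast, each piece satisfies $\|b_j*\phi_j\|_2^2\lesssim\alpha^2 r_j^{2n-n\rho}\le\alpha^2 r_j^n$ with no smoothing at all. That is a sign the scale is wrong: the real obstruction in your scheme is not (a) but the $F''$ piece, which cannot be closed.

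Three smaller issues. First, the inference ``$TJ^{\beta}\in\Psi^0_{\rho,\delta}$, hence $\|TF'\|_2\lesssim\|J^{\beta}F'\|_2$'' has the sign reversed. Writing $TF'=(TJ^{\beta})(J^{-\beta}F')$ gives $\|TF'\|_2\lesssim\|J^{-\beta}F'\|_2$, with $J^{-\beta}$ smoothing of order $\frac n2(1-\rho)$; this is the quantity the paper bounds by $\alpha\|f\|_{L^1}$.

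Second, the interaction between different $j$ is handled in the paper not by almost orthogonality but by splitting $J^{-\beta}F'$ into two parts:
\begin{itemize}
\item a local part $F_1$, where bounded overlap gives $\|F_1\|_2^2\lesssim\sum_j\|b_j\|_1^2\|\phi_j*k\|_2^2\lesssim\alpha^2\sum_j|I_j|$;
\item a far part $F_2$, bounded in $L^1$ by $\|f\|_{L^1}$ and in $L^\infty$ by $\alpha$, because $\phi_j*k$ is essentially constant on $I_j$ when evaluated away from it.
\end{itemize}

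Third, your $H^1$ fallback does not work. The Calder\'on--Zygmund pieces $b_j$ are controlled only in $L^1$, so they are neither atoms nor molecules. $H^1\to L^1$ boundedness therefore does not by itself give a weak $(1,1)$ bound.
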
 
As an application of our results we provide {\it end-point a-priori $L^1$-estimates} for the sub-Laplacian $\mathcal{L}_{sub}=X^2+Y^2,$ and for the heat operator $T=Z-X^2-Y^2$  on $SU(2)\cong \mathbb{S}^3,$ see Section \ref{applications}.

 We note that Theorem \ref{H1:L1} and Theorem \ref{L:infty:BMO} in Section \ref{Section:3} have already been proved in \cite{CR} for $\delta\leq \rho,$ following the approach in \cite{9} where the case $\delta< \rho,$ was considered. However, the novelty of this paper in relations to these results comes from the kernel estimates proved in this paper, from which we substantially simplify these proofs. To prove Theorem \ref{H1:L1}, we will use a method similar to that of Alvarez and Hounie \cite[Theorem 3.2]{1} in the context of $\mathbb{R}^n$. The idea of their proof is to show that for each $(1,\infty)$ atom $a$, we have $\|Ta\|_{L^1}\lesssim 1$. We do this as follows; by the definition of an atom, there exists a ball $B(z,R)$ such that
  \begin{equation*}
        \operatorname{supp}(a)\subseteq B(z,R),\hspace{5pt}\|a\|_{L^\infty}\leq|B(z,R)|^{-1},\hspace{5pt}\int_{B(z,R)}a(x)\diff{x}=0.
 \end{equation*}
Then,
\begin{equation*}
\int_{\mathbb{R}^n}|Ta(x)|\diff{x}\leq\int_{B(z,2R^\rho)}|Ta(x)|\diff{x}+\int_{\mathbb{R}^n\setminus B(z,2R^\rho)}|Ta(x)|\diff{x}\equiv I_1+I_2.
    \end{equation*} 
Now the idea is to estimate both integrals separately. To estimate $I_1$, an analogue of the following theorem is needed (see  \cite[Theorem 3.1]{1}).
\begin{theorem}
    Given $T\in\Psi^{m}_{\rho,\delta}(\mathbb{R}^n)$, $0<\rho\leq 1$, $0\leq\delta<1$, $m\leq-n\left(\frac{1-\rho}{2}+\lambda\right)$, where $\lambda=\max\{0,\frac{\delta-\rho}{2}\}$. Then $T$ is continuous from $L^2$ into $L^\frac{2}{\rho}$ and from $L^{\frac{2}{2-\rho}}$ to $L^2$.  
\end{theorem}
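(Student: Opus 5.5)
The plan is to factor $T$ through a Bessel potential and then use the Hardy--Littlewood--Sobolev inequality (fractional integration) together with $L^2$-boundedness of an operator of order $0$. Write $J^{s}=(1-\Delta)^{s/2}$ and set $s=-m\ge \frac{n}{2}(1-\rho)+n\lambda$. Then
\[
T=(TJ^{s})J^{-s}=J^{-s}(J^{s}T).
\]
By the symbolic calculus, $TJ^{s}$ and $J^{s}T$ belong to $\Psi^{m+s}_{\rho,\delta}=\Psi^{-n\lambda}_{\rho,\delta}$.

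For the first step, we need $L^2$-boundedness of these order-$(-n\lambda)$ operators.
\begin{itemize}
\item When $\delta<\rho$, we have $\lambda=0$, and the order-zero operator is $L^2$-bounded by Calder\'on--Vaillancourt.
\item When $\delta\ge\rho$, $\delta<1$, the classical $L^2$ result (Hounie; Calder\'on--Vaillancourt extended) says that $\Psi^{m'}_{\rho,\delta}$ is $L^2$-bounded for $m'\le -n(\delta-\rho)/2=-n\lambda$.
\end{itemize}
So in all cases $A:=TJ^{s}$ and $A':=J^{s}T$ are bounded on $L^2$. I take this as the key input and cite it rather than reprove it.

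For the second step, $J^{-s}$ is convolution with the Bessel kernel $G_s$, which satisfies $|G_s(x)|\lesssim |x|^{s-n}$ locally (for $0<s<n$) and decays exponentially. Hardy--Littlewood--Sobolev then shows that $J^{-s}$ maps $L^p\to L^q$ whenever $\frac1p-\frac1q=\frac{s}{n}$ with $1<p<q<\infty$. Because the decay at infinity is exponential, shrinking $s$ is harmless: for larger $s$ we still get the boundedness with the exponent pair corresponding to $s_0=\frac n2(1-\rho)$. Indeed $J^{-s}=J^{-(s-s_0)}J^{-s_0}$, and $J^{-(s-s_0)}$ is bounded on every $L^p$. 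With $s_0=\frac n2(1-\rho)$ we have $\frac{s_0}{n}=\frac{1-\rho}{2}$.
\begin{itemize}
\item For $L^{2/(2-\rho)}\to L^2$: $\frac{2-\rho}{2}-\frac12=\frac{1-\rho}{2}$, as needed.
\item For $L^2\to L^{2/\rho}$: $\frac12-\frac{\rho}{2}=\frac{1-\rho}{2}$, as needed.
\end{itemize}
The case $\rho=1$ is trivial, since then $s_0=0$ and everything reduces to $L^2$-boundedness.

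Assembling the pieces:
\[
\|Tf\|_{L^{2/\rho}}=\|J^{-s}A'f\|_{L^{2/\rho}}\lesssim \|A'f\|_{L^2}\lesssim\|f\|_{L^2},
\qquad
\|Tf\|_{L^2}=\|AJ^{-s}f\|_{L^2}\lesssim \|J^{-s}f\|_{L^2}\lesssim\|f\|_{L^{2/(2-\rho)}}.
\]
The second estimate can alternatively be obtained by duality from the first, applied to $T^*$. This needs $T^*\in\Psi^{m}_{\rho,\delta}$, which holds for $\delta<\rho$. In the case $\delta\ge\rho$ the adjoint calculus can fail, so I use the direct factorization instead.

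The main obstacle is the composition step when $\delta\ge\rho$. The asymptotic calculus for $TJ^{s}$ is unavailable in that range, but the composition with the Fourier multiplier $J^{s}$ (symbol $\langle\xi\rangle^{s}\in S^{s}_{1,0}$) can still be checked directly. For $TJ^{s}$ the symbol is simply $\sigma(x,\xi)\langle\xi\rangle^{s}$, exactly in $S^{m+s}_{\rho,\delta}$. For $J^{s}T$ I would avoid the problem altogether by proving the $L^2\to L^{2/\rho}$ bound via duality from the $L^{2/(2-\rho)}\to L^2$ bound for $T^*$. The adjoint has the form $J^{-s}A^{*}$ with $A^*$ $L^2$-bounded, which only uses the adjoint of an $L^2$-bounded operator, not a symbol class for it.
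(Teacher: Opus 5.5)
\textbf{Verdict.} Your strategy is the right one, but as written the proof has a real gap in the $L^2\to L^{2/\rho}$ estimate when $\delta\ge\rho$, plus a bookkeeping error. The strategy is to factor $T$ through a Bessel potential, use $L^2$-boundedness of the composite, and apply Hardy--Littlewood--Sobolev to $J^{-s}$. This is the same factorization the paper uses for its compact-group analogue: in the lemma preceding the $H^1$--$L^1$ theorem, $J^{n(1-\rho)/2}T$ and $TJ^{n(1-\rho)/2}$ are $L^2$-bounded and then HLS is applied. For the Euclidean statement itself the paper only cites \'Alvarez--Hounie.

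\textbf{Bookkeeping error.} With $s=-m$ the composites $TJ^{s}$ and $J^{s}T$ have order $m+s=0$, not $-n\lambda$. When $\delta>\rho$, order $0$ does not give $L^2$-boundedness; the sharp order is $-n(\delta-\rho)/2$. You must put only $s=-m-n\lambda\ge \frac n2(1-\rho)$ into the Bessel potential and leave order $-n\lambda$ in the composite. With that choice, your reduction $J^{-s}=J^{-(s-s_0)}J^{-s_0}$ and the HLS exponent check go through unchanged.

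\textbf{The gap: the duality step is circular.} From $T=AJ^{-s}$ with $A=TJ^{s}$ you get $T^*=J^{-s}A^*$. Since $A^*$ is only known to be bounded on $L^2$, this gives $T^*:L^2\to L^{2/\rho}$. Its dual is $T:L^{2/(2-\rho)}\to L^2$, which is the estimate you already have. To get $T^*:L^{2/(2-\rho)}\to L^2$, and hence $T:L^2\to L^{2/\rho}$, you would need $T^*=BJ^{-s}$ with $B=T^*J^{s}=(J^{s}T)^*$ bounded on $L^2$. That is exactly the $L^2$-boundedness of $J^{s}T$ you were trying to avoid, so duality gains nothing.

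\textbf{The fix.} There was no obstacle in the first place. Composing on the left with the $x$-independent multiplier $\langle\xi\rangle^{s}\in S^{s}_{1,0}$ requires only $\delta<1$, not $\delta<\rho$.
\begin{itemize}
\item The symbol of $J^{s}T$ is $\langle\xi+D_x\rangle^{s}\sigma(x,\xi)$.
\item Its Taylor terms $\frac{1}{\alpha!}\partial_\xi^\alpha\langle\xi\rangle^{s}\,D_x^\alpha\sigma(x,\xi)$ lie in $S^{m+s-(1-\delta)|\alpha|}_{\rho,\delta}$. Each $\xi$-derivative on $\langle\xi\rangle^{s}$ gains $\langle\xi\rangle^{-1}$, while each $x$-derivative on $\sigma$ costs only $\langle\xi\rangle^{\delta}$.
\item Split the remainder into $|\eta|\lesssim\langle\xi\rangle$ and $|\eta|\gtrsim\langle\xi\rangle$. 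In the far region, integrating by parts in $y$ gains $\langle\eta\rangle^{-2M}\langle\xi\rangle^{2M\delta}\lesssim\langle\eta\rangle^{-2M(1-\delta)}$.
\end{itemize}
Hence $J^{s}T\in\Psi^{-n\lambda}_{\rho,\delta}$ for all $0\le\delta<1$, and your original first-step claim was correct. With $s=-m-n\lambda$, both estimates follow directly from the two factorizations $T=(TJ^{s})J^{-s}=J^{-s}(J^{s}T)$, with no duality needed.
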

We will need to prove an analogous theorem in the context of compact Lie groups; for details, see Section \ref{Section:3}. The estimation for $I_2$ follows from the kernel condition. Theorem \ref{L:infty:BMO} follows from Theorem \ref{H1:L1} by the duality argument and the Fefferman-Stein duality property $(H^1(G))^{'}=BMO(G)$, which we explain in Section \ref{Section:2}. Note that we can interpolate the $L^2$-boundedness of the operator $T$ with its $L^\infty$-$BMO$ and $H^1$-$L^1$-boundedness in order to obtain the following $L^p$-boundedness property for operators of order $m=-\frac{n}{2}(1-\rho).$ The $L^{p}$-boundedness of operators in $\Psi_{\rho,\delta}^{m}(G)$ has also been
studied in \cite{9}, for the range $0 \leq \delta < \rho \leq 1$. They also provided $L^p$-boundedness for the H\"ormander classes on compact Lie groups with orders in the interval $[-\frac{n}{2}(1-\rho),0]$ only when the Lebesgue index $p$ belongs to small intervals centered at $p_0=2$  in the following way,  see \cite[Theorem 4.15]{9}. 
\begin{theorem}\label{Lp:DR}
    Let $G$ be a compact Lie group of dimension $n$ and $0<\rho\leq 1$. Let $0\leq\delta<\rho$ and consider a symbol $\sigma:=\sigma(x,\xi)$ in the H\"ormander class $S^{m}_{\rho,\delta}(G).$
    Then $\sigma(x,D)$ extends to a bounded operator from $L^p(G)$ to $L^p(G)$ for all $1<p<\infty$ such that 
    \begin{equation*}
    m\leq   -n(1-\rho)  \left|\frac{1}{p}-\frac{1}{2}\right|.
    \end{equation*}Moreover, for $m\leq -n(1-\rho)/2,$ $\sigma(x,D)$  is bounded from the Hardy space $H^1(G)$ to $L^1(G)$ and from $L^\infty(G)$ to $BMO(G).$
\end{theorem}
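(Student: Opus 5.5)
The plan is to prove the endpoint statements first and then obtain the $L^p$ range by complex interpolation. Throughout I write $\mathcal{L}_G$ for the Laplacian on $G$, $\langle\xi\rangle$ for the associated weight on $\widehat{G}$, and $J^s:=(1+\mathcal{L}_G)^{s/2}$, whose symbol $\langle\xi\rangle^{s}I_{d_\xi}$ lies in $S^{s}_{1,0}(G)$.

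\textbf{Step 1: $L^2$ and $H^1$--$L^1$ bounds.} The $L^2$-boundedness of $\Psi^0_{\rho,\delta}(G)$ for $0\le\delta<\rho\le1$ is a Calderón--Vaillancourt type result for the global calculus, which I take as known. For the $H^1(G)$--$L^1(G)$ bound with $m\le -\frac n2(1-\rho)$, I would follow the Álvarez--Hounie scheme sketched above. It suffices to show $\|Ta\|_{L^1}\lesssim1$ uniformly over $(1,\infty)$-atoms $a$ supported in $B(z,R)$.
\begin{itemize}
\item If $R\ge1$, then $|B(z,R)|\sim 1$ and $\|a\|_{L^2}\lesssim 1$ on the compact group, so $L^2$-boundedness and Cauchy--Schwarz give the bound.
\item If $R<1$, split $\int_G|Ta|=\int_{B(z,2R^\rho)}|Ta|+\int_{B(z,2R^\rho)^c}|Ta|=:I_1+I_2$.
\item For $I_2$, use the cancellation $\int a=0$ to write $Ta(x)=\int (K(x,y)-K(x,z))a(y)\,dy$. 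Fubini and Theorem \ref{kernel_estimates} then give $I_2\le C\|a\|_{L^1}\le C$.
\item For $I_1$, prove the group analogue of the Álvarez--Hounie $L^{2/(2-\rho)}\to L^2$ estimate by factoring $T=(TJ^{\frac n2(1-\rho)})J^{-\frac n2(1-\rho)}$. The first factor is of order $0$ and hence $L^2$-bounded. The second maps $L^{2/(2-\rho)}\to L^2$ by Hardy--Littlewood--Sobolev, since $\frac{2-\rho}{2}-\frac12=\frac{1-\rho}{2}$ matches the order $\frac n2(1-\rho)$ divided by $n$.
\item Then $I_1\le|B(z,2R^\rho)|^{1/2}\|Ta\|_{L^2}\lesssim R^{n\rho/2}\|a\|_{L^{2/(2-\rho)}}\lesssim R^{n\rho/2}R^{-n}R^{n(2-\rho)/2}=1$.
\end{itemize}

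\textbf{Step 2: $L^\infty$--$BMO$.} Since $\delta<\rho$, the adjoint $T^*$ again belongs to $\Psi^m_{\rho,\delta}(G)$ by the global symbolic calculus. Applying Step 1 to $T^*$ and using the duality $(H^1(G))'=BMO(G)$ gives $T:L^\infty\to BMO$.

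\textbf{Step 3: interpolation.} Fix $1<p<2$, let $\theta=2/p-1\in(0,1)$ so that $\frac1p=\frac{1-\theta}{2}+\theta$, and set $m_0=-\frac n2(1-\rho)$. Write $T=T_\theta$ inside the analytic family
\[
T_w:=e^{w^2}\,\sigma(x,D)\,J^{-m+m_0 w},\qquad 0\le \operatorname{Re}w\le1 .
\]
\begin{itemize}
\item On $\operatorname{Re}w=0$ the operators have order $0$ and are $L^2$-bounded.
\item On $\operatorname{Re}w=1$ they have order $m_0$ and are $H^1\to L^1$ bounded by Step 1.
\item Fefferman--Stein (Stein) interpolation between $H^1$ and $L^2$ then yields $L^p$-boundedness for $m\le -n(1-\rho)(\frac1p-\frac12)$.
\end{itemize}
For $2<p<\infty$ one argues either by duality via $T^*$ or by interpolating between $L^2$ and $L^\infty\to BMO$.

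\textbf{Main obstacle.} The delicate point is admissibility of the analytic family: the seminorms of the symbol of $\langle\xi\rangle^{i\tau}$ in $S^0_{1,0}(G)$ must grow at most polynomially in $|\tau|$. Consequently the $L^2$ and $H^1$--$L^1$ constants of $T_{i\tau}$ and $T_{1+i\tau}$ grow polynomially, and this growth is absorbed by the factor $e^{w^2}$. I would verify this with the difference-operator characterisation of global symbols, applied to functions of $\mathcal{L}_G$.
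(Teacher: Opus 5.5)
Your proposal is correct and follows the route the paper indicates: the atomic $H^1$--$L^1$ argument, with $I_2$ controlled by the kernel estimate of Theorem~\ref{kernel_estimates} and $I_1$ by the $L^{2/(2-\rho)}\to L^2$ bound obtained from Hardy--Littlewood--Sobolev, then $L^\infty$--$BMO$ by duality, then interpolation with $L^2$. Your analytic family is what is needed to turn the paper's one-line interpolation remark (which, applied to a fixed operator, only gives order $m=-\frac n2(1-\rho)$) into the full range $m\le -n(1-\rho)|\frac1p-\frac12|$; just build the family with the critical order $m_p=-n(1-\rho)(\frac1p-\frac12)$ in place of $m$ (using $S^m_{\rho,\delta}\subseteq S^{m_p}_{\rho,\delta}$), so that $T_\theta=e^{\theta^2}\sigma(x,D)$ exactly.
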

The index $m:=\frac{n(1-\rho)}{2}$ in Theorem \ref{Lp:DR} is sharp, see \cite[Remark 4.16]{9} for more details. 
\begin{remark} It is interesting to mention that our kernel estimates in Theorem \ref{kernel_estimates} also provide a new approach to prove Theorem \ref{Lp:DR} for the full range $0 \leq \delta \le \rho \le 1, \rho\neq0,\,\delta \neq 1$, (see Theorem \ref{H1:L1} and Theorem \ref{L:infty:BMO}).
    
\end{remark}

\section{Preliminaries}\label{Section:2}
\subsection{The group Fourier transform}
We will now recall some basic facts about the theory of pseudo-differential operators on compact Lie groups, and we refer to \cite{17} and \cite{18} for a comprehensive study. Let $G$ be a compact Lie group and $\diff{x}$ its normalized left-invariant Haar measure. The Lie algebra of $G$ is denoted by $\mathfrak{g}$. Let $\widehat{G}$ denote the unitary dual of $G$, i.e. the set of equivalence classes of irreducible unitary representations of $G$. Let $[\zeta]\in\widehat{G}$ denote the equivalence class of an irreducible unitary representation $\zeta\colon G\to\operatorname{\mathcal{U}}(\mathcal{H}_\zeta)$; the representation space $\mathcal{H}_\zeta$ is finite-dimensional since $G$ is compact, and we set $d_\zeta:=\operatorname{dim}(\zeta)=\operatorname{dim}(\mathcal{H}_\zeta)$. We define the Fourier coefficient $\widehat{f}(\zeta)\in\operatorname{End}(\mathcal{H}_\zeta)$ of $f\in L^1(G)$ by
\begin{equation*}
      \mathscr{F}f(x)\equiv  \widehat{f}(\zeta)=\int_{G}f(x)\zeta(x)^*\diff{x},
    \end{equation*}
    more precisely, 
    \begin{equation*}
        (\widehat{f}(\zeta)u,v)_{\mathcal{H}_\zeta}=\int_{G}f(x)(\zeta(x)^*u,v)_{\mathcal{H}_\zeta}\diff{x}=\int_{G}f(x)(u,\zeta(x)v)_{\mathcal{H}_\zeta}\diff{x},
    \end{equation*}
    for every $u,v\in\mathcal{H}_{\zeta}$, where $(\cdot,\cdot)_{\mathcal{H}_{\zeta}}$ denotes the inner product on $\mathcal{H}_{\zeta}$.
The Fourier inversion formula follows from the Peter Weyl theorem, we have that
\begin{equation*}
    f(x)=\mathscr{F}^{-1}(\widehat{f}\,)(x):=\sum_{[\zeta]\in\widehat{G}}d_\zeta\operatorname{Tr}(\zeta(x)\widehat{f}(\zeta)).
\end{equation*}
The Parseval identity takes the form
\begin{equation*}
    \|f\|_{L^2(G)}=\left(\sum_{[\zeta]\in\widehat{G}}d_\zeta\|\widehat{f}(\zeta)\|_{HS}^2\right)^{\frac{1}{2}},
\end{equation*}
here $\|\widehat{f}(\zeta)\|_{HS}^2=\operatorname{Tr}(\widehat{f}(\zeta)\widehat{f}(\zeta)^*)$. 

\subsection{Global pseudo-differential operators}
Let $A\colon C^{\infty}(G)\to\mathcal{D}'(G)$ be continuous and linear. We can define its matrix-valued symbol $\sigma_A(x,\zeta)\in\mathbb{C}^{d_\zeta\times d_\zeta}$ by
\begin{equation*}
    \sigma_A(x,\zeta)=\zeta(x)^*(A\zeta)(x),
\end{equation*}
 where $A\zeta(x)\in\mathbb{C}^{d_\zeta\times d_\zeta}$ is understood as $(A\zeta(x))_{i,j}=(A\zeta_{ij})(x)$. Then one has the global quantization
 \begin{equation*}
        Af(x)=\sum_{[\zeta]\in\widehat{G}}d_\zeta\operatorname{Tr}(\zeta(x)\sigma_A(x,\zeta)\widehat{f}(\zeta)),
    \end{equation*}
in the sense of distributions, and the sum is independent of the choice of a representation $\zeta$ from each equivalence class $[\zeta]\in\widehat{G}$.
\begin{definition}
     Let $\{Y_j\}_{j=1}^{n}$ be a basis for $\mathfrak{g}$. Denote by $\partial_j$ the left-invariant first-order differential operator corresponding to $Y_j$. For $\alpha\in\mathbb{N}_{0}^{n}$, we denote $\partial^\alpha=\partial^{\alpha_1}_{1}\cdots\partial^{\alpha_n}_{n}$. This is also often denoted by $\partial_x^\alpha$.
\end{definition}
 We say that $Q_\zeta$ is a difference operator of order $k$ if it is given by
 \begin{equation*}
     Q_\zeta\widehat{f}(\zeta)=\widehat{q_Qf}(\zeta),
 \end{equation*}
 for a function $q=q_Q\in C^{\infty}(G)$  vanishing of order $k$ at the identity $e\in G$, i.e. $(P_xq_Q)(e)=0$ for all left-invariant differential operators $P_x\in\operatorname{Diff}^{k-1}(G)$ of order $k-1$. The set of all difference operators of order $k$ is denoted by $\operatorname{diff}^k(G)$. For a given function $q\in C^{\infty}(G)$ we denote the associated difference operator, acting on Fourier coefficients by
 \begin{equation*}
     \triangle_q\widehat{f}(\zeta)=\widehat{qf}(\zeta).
 \end{equation*}
\begin{definition}
    A collection of $m$ first-order difference operators $\triangle_1,\dots,\triangle_m\in\operatorname{diff}^1(\widehat{G})$ is called admissible, if the corresponding functions $q_1,\dots,q_m\in C^{\infty}(G)$ satisfy $d{q}_j(e)\neq 0$, $j=1,\dots,m$ and $\operatorname{rank}(d{q}_1(e),\dots,d{q}_m(e))=n$. In particular, it follows that $e$ is an isolated common zero of the family $\{q_j\}_{j=1}^{m}$. An admissible collection is called strongly admissible if 
    \begin{equation*}
        \bigcap_{j}\{x\in G: q_j(x)=0\}=\{e\}.
    \end{equation*}
 \end{definition}
Here, we consider the inner product $$g(X,Y)=(X,Y)_{g}:=-\textnormal{Tr}[\textnormal{ad}(X)\textnormal{ad}(Y)],$$ on $\mathfrak{g}$ for non-commutative $G$, where
\(\operatorname{ad}(X) : \mathfrak{g} \to \mathfrak{g}\) is given by
\(\operatorname{ad}(X)(Z) = [X,Z]\). Note that $(\cdot,\cdot)_{g}$ is the negative of the Killing form.  Let $\mathbb{X}=\{X_1,\cdots,X_n\}$ be an orthonormal basis of $\mathfrak{g}$ with respect to $(\cdot,\cdot)_{g}$. The canonical positive Laplacian on $G$ is defined via
$
    \mathcal{L}_G=-\sum_{j=1}^nX_j^2,
$ 
and then is independent of the choice of  $\mathbb{X}.$ We also use the multi-index notation $\triangle^\alpha_\zeta=\triangle^{\alpha_1}_{1}\cdots\triangle^{\alpha_m}_m$. A matrix-valued symbol $\sigma(x,\zeta)$ belongs to $S^m_{\rho,\delta}(G)$ if it is smooth in $x$, and for all multi-indices $\alpha,\beta$ there exists a constant $C_{\alpha,\beta}>0$ such that
\begin{equation*}
    \|\triangle_{\zeta}^\alpha\partial^{\beta}_{x}\sigma(x,\zeta)\|_{op}\leq C_{\alpha,\beta}\langle\zeta\rangle^{m-\rho|\alpha|+\delta|\beta|},
\end{equation*}
holds uniformly in $x$ and $\zeta\in\operatorname{Rep}(G)$. Here $\langle\zeta\rangle=(1+\lambda_{[\zeta]})^{\frac{1}{2}}$, and $\{\lambda_{[\zeta]}\}_{[\zeta]\in\widehat{G}}$ is the positive spectrum of the Laplacian $\mathcal{L}_{G}$. We also have employed the notation $\|\cdot\|_{op}$ for the operator norm of matrices on each representation space induced by the Euclidean norm. Next, we present several theorems that will be essential for proving the main results. The first proposition will be essential in proving the kernel estimation for large radii. 
\begin{proposition}\label{kernel}
    Let $\sigma\in S^m_{\rho,\delta}$ with $0 \leq\delta\leq\rho\leq 1$, $\rho\neq0$. Then its associated kernel $(x,y)\to k_x(y)\in C^{\infty}(G\times G\setminus{e_G})$ satisfies the following kernel estimates:
    \begin{itemize}
        \item If $n+m>0$ then there exist $C$ and $a,b\in\mathbb{N}$ (independent of $\sigma$) such that
        \begin{equation*}
            |k_x(y)|\leq C\sup_{\pi\in\widehat{G}}\|\sigma(x,\pi)\|_{S^{m}_{\rho,a,b}}|y|^{-\frac{n+m}{\rho}}.
        \end{equation*}
        \item If $n+m=0$ then there exist $C$ and $a,b\in\mathbb{N}$(independent of $\sigma$) such that
        \begin{equation*}
            |k_x(y)|\leq C\sup_{\pi\in\widehat{G}}\|\sigma(x,\pi)\|_{S^{m}_{\rho,a,b}}|\ln|y||.
        \end{equation*}
        \item If $n+m<0$ then $k_x$ is continuous on $G$ and bounded
        \begin{equation*}
            |k_x(y)|\lesssim_{m}\sup_{\pi\in\widehat{G}}\|\sigma(x,\pi)\|_{S^{m}_{\rho,0,0}}.
        \end{equation*}
    \end{itemize}
\end{proposition}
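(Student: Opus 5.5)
We fix $x\in G$ and view $k_x$ as the right-convolution kernel of the Fourier multiplier $\pi\mapsto\sigma(x,\pi)$, so that
\[
k_x(y)=\sum_{[\pi]\in\widehat G}d_\pi\operatorname{Tr}(\pi(y)\sigma(x,\pi)).
\]
Since $x$ is a frozen parameter, the $x$-derivatives never enter, and all bounds will be uniform in $x$ through the seminorms $\|\sigma(x,\cdot)\|_{S^m_{\rho,a,b}}$.

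\textbf{Step 1: dyadic decomposition.} Take a Littlewood--Paley partition of unity $1=\sum_{j\ge0}\psi_j(t)$, with $\psi_j$ supported in $t\sim 2^j$ for $j\ge1$. Set $\sigma_j(x,\pi)=\sigma(x,\pi)\psi_j(\langle\pi\rangle)$ and let $k_{x,j}$ be the corresponding kernel, so that $k_x=\sum_j k_{x,j}$ in the sense of distributions.

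\textbf{Step 2: size bound.} Using $|\operatorname{Tr}(\pi(y)A)|\le d_\pi\|A\|_{op}$ and the Weyl law $\sum_{\langle\pi\rangle\le\Lambda}d_\pi^2\asymp\Lambda^n$, we obtain
\[
|k_{x,j}(y)|\lesssim \sup_\pi\|\sigma(x,\pi)\langle\pi\rangle^{-m}\|_{op}\,2^{j(n+m)}.
\]

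\textbf{Step 3: decay bound.} Choose a strongly admissible family $q_1,\dots,q_m$, for instance built from matrix coefficients $\pi(y)_{ik}-\delta_{ik}$. The rank condition at $e$ together with strong admissibility give $\sum_i q_i(y)^2\asymp|y|^2$ on all of $G$. Then, for $N\in\mathbb N$,
\[
|y|^{2N}|k_{x,j}(y)|\lesssim\sum_{|\alpha|=2N}|q^\alpha(y)k_{x,j}(y)|,
\]
and $q^\alpha k_{x,j}$ is the kernel of the multiplier $\triangle^\alpha\sigma_j(x,\cdot)$. By the finite Leibniz rule for difference operators, and because $\psi_j(\langle\pi\rangle)$ lies in $S^0_{1,0}$ with seminorms uniform in $j$ (a standard fact from the Ruzhansky--Turunen calculus), we get
\[
\|\triangle^\alpha\sigma_j(x,\pi)\|_{op}\lesssim\|\sigma(x,\cdot)\|_{S^m_{\rho,2N,0}}\,2^{j(m-\rho|\alpha|)},
\]
where we use $\rho\le1$. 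Repeating the count of Step 2 yields
\[
|k_{x,j}(y)|\lesssim|y|^{-2N}2^{j(n+m-2N\rho)}.
\]

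\textbf{Step 4: summation.} Fix $y\neq e$ and split the sum over $j$ at $2^{j_0}\sim|y|^{-1/\rho}$.
\begin{itemize}
\item For $j\le j_0$ we use Step 2. The sum is $\lesssim 2^{j_0(n+m)}\sim|y|^{-(n+m)/\rho}$ if $n+m>0$, it is $\lesssim j_0\sim|\ln|y||$ if $n+m=0$, and it is $O(1)$ if $n+m<0$.
\item For $j>j_0$ we use Step 3 with $2N\rho>n+m$. The sum is $\lesssim|y|^{-2N}2^{j_0(n+m-2N\rho)}\sim|y|^{-(n+m)/\rho}$.
\end{itemize}
This proves the first two items, with $a=2N$ for a fixed $N>(n+m)/(2\rho)$ and $b=0$. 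For $n+m<0$, Step 2 alone makes the series $\sum_j k_{x,j}$ converge absolutely and uniformly in $y$. Hence $k_x$ is continuous, with the bound depending only on the zeroth seminorm.

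\textbf{Main obstacle.} The step I expect to be delicate is Step 3. It requires establishing the uniform-in-$j$ estimates for differences of the cut-off $\psi_j(\langle\pi\rangle)$, together with the Leibniz formula for $\triangle^\alpha$ of a product, whose remainder terms involve other difference operators of the same total order. I would handle both by citing the known characterization that $\psi(\langle\pi\rangle/2^j)$ is a bounded family in $S^0_{1,0}(G)$, obtained via functional calculus of $\mathcal L_G$. I would also use the fact that the symbol class is independent of the chosen admissible family, so that the differences of $\sigma$ only need to be controlled for one fixed family.
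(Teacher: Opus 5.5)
The paper gives no argument of its own for this proposition; it simply quotes \cite[Proposition 6.7]{12}. Your proof is the standard argument behind that result and is correct: dyadic pieces $\sigma\psi_j(\langle\pi\rangle)$, the Weyl-law count for the size bound, multiplication by $q^\alpha$ to gain a factor $|y|^{-2N}2^{-2N\rho j}$, and the split at $2^{j_0}\sim|y|^{-1/\rho}$. The one step you should make explicit is that ``repeating the count of Step 2'' needs $\triangle^\alpha\sigma_j(x,\cdot)$ to be supported where $\langle\pi\rangle\sim 2^j$. This does not follow from uniform $S^0_{1,0}$ bounds on $\psi_j(\langle\pi\rangle)$ alone, but it does hold for the matrix-coefficient differences $\mathbb{D}_{\xi,i,k}$ you chose: they only couple $[\pi]$ to the constituents $[\pi']$ of $\overline{\xi}\otimes\pi$, and for these $|\langle\pi'\rangle-\langle\pi\rangle|\le C_\xi$.
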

\begin{proof}
    See \cite[Proposition 6.7]{12}. 
\end{proof}
The following theorem will be useful in the proof of the kernel estimation for small radii. 
\begin{theorem}\label{Weyl}
    Let $\alpha\in\mathbb{R}$ with $\alpha\neq-1$. Then we have that the following asymptotic properties hold as $\lambda\to\infty$
    \begin{equation*}
    \begin{aligned}
        &\sum_{\langle\zeta\rangle\leq\lambda}d_\zeta^2\langle\zeta\rangle^{\alpha n}\asymp\lambda^{(\alpha+1)n}\quad\mathrm{for}\,\,\alpha>-1,\\
        &\sum_{\langle\zeta\rangle\geq\lambda}d_\zeta^2\langle\zeta\rangle^{\alpha n}\asymp\lambda^{(\alpha+1)n}\quad\mathrm{for}\,\,\alpha<-1.
    \end{aligned}
    \end{equation*}
\end{theorem}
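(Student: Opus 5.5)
The plan is to reduce both asymptotics to the Weyl law for the Laplacian $\mathcal{L}_G$, and then sum over dyadic shells. The functions $\{\sqrt{d_\zeta}\,\zeta_{ij}\}$, for $[\zeta]\in\widehat{G}$ and $1\le i,j\le d_\zeta$, form an orthonormal basis of $L^2(G)$ consisting of eigenfunctions of $\mathcal{L}_G$. Each eigenvalue $\lambda_{[\zeta]}$ therefore appears with multiplicity $d_\zeta^2$ (summed over the classes sharing that eigenvalue). Consequently the counting function
\begin{equation*}
N(\lambda):=\sum_{\langle\zeta\rangle\le\lambda}d_\zeta^2
\end{equation*}
is exactly the eigenvalue counting function of the elliptic operator $(1+\mathcal{L}_G)^{1/2}$ on the compact $n$-dimensional manifold $G$. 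The classical Weyl law then gives $N(\lambda)=c_G\lambda^n+o(\lambda^n)$ as $\lambda\to\infty$, with $c_G=\mathrm{vol}(G)\,\omega_n/(2\pi)^n>0$. This is the case $\alpha=0$, and it is the only spectral input I intend to use.

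The second step is to pass from the counting function to shell estimates. Set $A_k:=\{[\zeta]: 2^{k}<\langle\zeta\rangle\le 2^{k+1}\}$. Because the Weyl law has an exact leading constant,
\begin{equation*}
\sum_{[\zeta]\in A_k}d_\zeta^2=N(2^{k+1})-N(2^k)=c_G(2^n-1)2^{kn}+o(2^{kn}).
\end{equation*}
Hence there are $k_0$ and constants $0<c_1\le c_2$ with $c_1 2^{kn}\le\sum_{A_k}d_\zeta^2\le c_2 2^{kn}$ for all $k\ge k_0$. The lower bound is the delicate point: it needs the true asymptotic $N(\lambda)\sim c_G\lambda^n$, not merely $N(\lambda)\asymp\lambda^n$. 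On $A_k$ one also has $\langle\zeta\rangle^{\alpha n}\asymp 2^{k\alpha n}$, with constants depending only on $\alpha$ and $n$. It follows that each shell contributes
\begin{equation*}
\sum_{A_k}d_\zeta^2\langle\zeta\rangle^{\alpha n}\asymp 2^{k(\alpha+1)n}.
\end{equation*}

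The last step is summing geometric series, with $2^{K}\le\lambda<2^{K+1}$.

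\emph{Case $\alpha>-1$.} The exponent $(\alpha+1)n$ is positive. The finitely many terms with $\langle\zeta\rangle\le 2^{k_0}$ contribute $O(1)$, and the shells $k_0\le k\le K$ contribute $\asymp\sum_{k\le K}2^{k(\alpha+1)n}\asymp 2^{K(\alpha+1)n}\asymp\lambda^{(\alpha+1)n}$. For the lower bound it suffices to keep a single full shell below $\lambda$, namely $A_{K-1}$. For the upper bound, the partial top shell is dominated by the full shell $A_K$.

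\emph{Case $\alpha<-1$.} The series $\sum_{k\ge K}2^{k(\alpha+1)n}$ converges and is $\asymp 2^{K(\alpha+1)n}\asymp\lambda^{(\alpha+1)n}$. For the upper bound I include the partial shell $A_{K}$ in full. For the lower bound I keep only the shell $A_{K+1}$, which lies entirely in $\{\langle\zeta\rangle\ge\lambda\}$. This also shows that the tail sum is finite.

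The only genuine obstacle is justifying the two-sided shell bound for large $k$, since the summation is routine. I will take it from the sharp Weyl asymptotics, which hold on any compact Riemannian manifold (Hörmander's remainder $O(\lambda^{n-1})$ is more than enough). As an alternative on $G$ itself, one could use the Weyl character and dimension formulas: there $d_\zeta$ is a polynomial of degree $\dim G-\operatorname{rank}G$ in the highest weight, which is summed over a lattice cone. The excluded value $\alpha=-1$ is exactly where the shells contribute comparably, which produces a logarithm, consistent with the statement.
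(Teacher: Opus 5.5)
Your proof is correct and follows the intended route: the paper gives no argument of its own, citing \cite[Section 5]{Weyl} and later invoking the result as ``Weyl's eigenvalue theorem,'' and your derivation (Peter--Weyl identifying $\sum_{\langle\zeta\rangle\le\lambda}d_\zeta^2$ with the eigenvalue counting function of $(1+\mathcal{L}_G)^{1/2}$, Weyl's law, then dyadic summation to handle the weight $\langle\zeta\rangle^{\alpha n}$) is the standard one behind that citation. Minor points: two-sided bounds $N(\lambda)\asymp\lambda^n$ already give the shell lower bound if you use shells $(M^k,M^{k+1}]$ with a fixed large $M$, so the sharp asymptotic is not actually needed; for $\alpha<-1$ and $\lambda=2^K$ your upper bound must also cover the values $\langle\zeta\rangle=2^K$, which lie in $A_{K-1}$; and in your aside, $d_\zeta$ has degree equal to the number of positive roots, $(\dim G-\operatorname{rank}G)/2$, so it is $d_\zeta^2$ that has degree $\dim G-\operatorname{rank}G$.
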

\begin{proof}
    See \cite[Section 5]{Weyl}. 
\end{proof}
Finally, we present two results that will help us to prove the weak (1,1)-boundedness theorem. 
\begin{proposition}\label{L2}
    Let $0 \leq\delta\leq\rho\leq 1$ with $\delta\neq 1$. If $\sigma\in S^0_{\rho,\delta}$ then $\operatorname{Op}(\sigma)$ is bounded on $L^2(G)$.
\end{proposition}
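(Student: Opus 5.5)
The statement immediately above is Proposition \ref{L2}: $L^2(G)$-boundedness of $\operatorname{Op}(\sigma)$ for $\sigma\in S^0_{\rho,\delta}(G)$ with $0\le\delta\le\rho\le1$, $\delta\neq1$. The plan is to follow the Calderón--Vaillancourt / Hörmander strategy adapted to the global calculus on $G$. The case $\delta<\rho$ is handled directly by the symbolic calculus, and the critical case $\delta=\rho<1$ needs an almost-orthogonality argument.

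\textbf{Case $\delta<\rho$.} We use Hörmander's square-root trick.
\begin{itemize}
\item Since $\|\sigma(x,\zeta)\|_{op}\le C$, choose $M>C$ and set $\tau(x,\zeta):=(M^2-\sigma(x,\zeta)^*\sigma(x,\zeta))^{1/2}$.
\item The matrix $M^2-\sigma^*\sigma$ is uniformly positive definite. By the functional calculus for such matrix symbols in $S^0_{\rho,\delta}(G)$ (Ruzhansky--Turunen), $\tau\in S^0_{\rho,\delta}(G)$.
\item The composition and adjoint formulas in the global calculus, valid for $\delta<\rho$, give
\[
\operatorname{Op}(\tau)^*\operatorname{Op}(\tau)=M^2-\operatorname{Op}(\sigma)^*\operatorname{Op}(\sigma)+R,
\qquad R\in\Psi^{-(\rho-\delta)}_{\rho,\delta}(G).
\]
\item Hence $\|\operatorname{Op}(\sigma)f\|^2\le M^2\|f\|^2+|(Rf,f)|$.
\item Iterating this with $R$ in place of $\sigma$ lowers the order by $\rho-\delta$ at each step. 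After finitely many steps we reach order $<-n$. There the kernel is bounded and continuous by Proposition \ref{kernel}, so the operator is Hilbert--Schmidt on the compact group, hence bounded.
\end{itemize}

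\textbf{Case $\delta=\rho<1$.} Here the calculus gives no gain, and I would run a Cotlar--Stein argument.
\begin{itemize}
\item Take a dyadic partition of unity $\sum_j\psi_j(\langle\zeta\rangle)=1$ with $\psi_j$ supported where $\langle\zeta\rangle\sim2^j$.
\item Split $\sigma=\sum_j\sigma_j$ with $\sigma_j=\sigma\psi_j$, and further localize in $x$ at scale $2^{-j\rho}$ using a partition of unity on $G$ adapted to balls of that radius.
\item Each localized piece $T_{j,k}$ is a symbol on a ``box'' where $x$-derivatives cost $2^{j\rho}$ and $\zeta$-differences gain $2^{-j\rho}$. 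After the natural rescaling, these behave like $S^0_{0,0}$ pieces.
\item Estimate $\|T_{j,k}^*T_{j',k'}\|$ and $\|T_{j,k}T_{j',k'}^*\|$. Orthogonality in frequency comes from Plancherel: the supports in $\langle\zeta\rangle$ are disjoint unless $|j-j'|\le1$. Orthogonality in space comes from integration by parts with difference operators, producing decay in $d(x,x')2^{j\rho}$ from the strongly admissible family $q_j$, in the same way as the proof of the kernel bounds in Proposition \ref{kernel}.
\item Summability of the Cotlar--Stein sums then follows from the Weyl asymptotics of Theorem \ref{Weyl}.
\end{itemize}

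\textbf{Main obstacle.} The hard step is this almost-orthogonality estimate when $\delta=\rho$. The noncommutativity of $\widehat{G}$ means we lack a clean ``frequency translation''. I would first try to avoid it entirely by citing the known result: the global $L^2$ theorem for $S^0_{\rho,\rho}(G)$, $\rho<1$, appears in Ruzhansky--Turunen--Wirth / Fischer, or via the subelliptic calculus in \cite{CR}, since \cite{12} is referenced as the source. In the paper I expect the proof to be a citation to \cite{12}, with the above as the underlying mechanism.
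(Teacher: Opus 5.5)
The paper gives no argument of its own here: its proof is just the citation \cite[Proposition 8.1]{12}. The fallback you state at the end is therefore exactly what the paper does.

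Your self-contained route is a separate matter. For $\delta<\rho$ it is Hörmander's argument and goes through, granted the global calculus (adjoints and compositions with gain $\rho-\delta$), but two points need care.
\begin{itemize}
\item The claim $\sqrt{M^2-\sigma^*\sigma}\in S^0_{\rho,\delta}(G)$ needs proof, since difference operators are not derivations. Use the Leibniz-type rule of the fixed collection in Remark \ref{Remark:diff:op} to get closure under products, pointwise adjoints and inverses of uniformly invertible symbols, then a Riesz--Dunford integral.
\item Re-running the square-root trick on $R$ lowers the order only because $\sqrt{M^2-r^*r}-M$ has order $2\,\mathrm{ord}(r)$. It is cleaner to treat negative orders first via $\|\operatorname{Op}(\sigma)f\|^2=(\operatorname{Op}(\sigma)^*\operatorname{Op}(\sigma)f,f)$, which doubles the order, until you reach order $<-n$, where your Hilbert--Schmidt argument applies.
\end{itemize}

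For $\delta=\rho$ your sketch has a genuine gap, and this case is precisely the content of the cited result.
\begin{itemize}
\item There is no dilation structure on $G$ or $\widehat{G}$ that rescales a shell $\langle\zeta\rangle\sim 2^j$, localized in $x$ at scale $2^{-j\rho}$, into an $S^0_{0,0}$ object. For $\rho=\delta=0$ the pieces are already of that type, so ``reduce to $S^0_{0,0}$'' is circular there.
\item More seriously, the Cotlar--Stein estimates are asserted, not proved. With $T_{j,k}=\chi_k\operatorname{Op}(\sigma_j)$ you do get $T_{j,k}T_{j',k'}^*=0$ for $|j-j'|>1$, since $\operatorname{Op}(\sigma_j)$ only sees the Fourier coefficients of its input on the support of $\psi_j$. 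But $T_{j,k}^*T_{j',k'}=\operatorname{Op}(\sigma_j)^*\chi_k\chi_{k'}\operatorname{Op}(\sigma_{j'})$ does not vanish, because the $x$-dependence of $\sigma_{j'}$ and of the cut-offs moves frequencies. Controlling it means controlling which irreducible components of $\pi\otimes\zeta$ occur, $\pi$ ranging over the $x$-frequencies, and this is where $\delta<1$ must enter.
\item For fixed $j$, the spatially off-diagonal terms need decay in operator norm, not merely decay of the kernel. Turning the kernel decay supplied by difference operators into operator bounds through Schur's test costs the $L^1$-norm of the kernel at frequency $2^j$. That norm can be as large as $2^{jn(1-\rho)/2}$, the same loss the order $-\frac{n}{2}(1-\rho)$ compensates in Theorem \ref{weak}, and then the Cotlar--Stein sums do not close. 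A genuinely $L^2$ argument exploiting oscillation is needed, and you do not supply one.
\end{itemize}

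The subcase $\delta=0$, including $\rho=\delta=0$, has an elementary proof you could add. Freeze $x=y$, apply Plancherel to the multipliers with symbols $\partial_y^\beta\sigma(y,\cdot)$, and use $H^k(G)\subset C(G)$, $k>n/2$, in $y$. This gives boundedness whenever $\sup_{x,\zeta}\|\partial_x^\beta\sigma(x,\zeta)\|_{op}<\infty$ for $|\beta|\le k$. For $0<\delta=\rho<1$ you really do have to rely on \cite{12}.
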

\begin{proof}
    See \cite[Proposition 8.1]{12}.
\end{proof}

\begin{theorem}[Calderón-Zygmund decomposition]\label{CZ}
    Let $f\in L^1(G)$ and $\gamma,\alpha>0$. Then for 
    \begin{equation*}
        \alpha\gamma>\frac{1}{|G|}\int_{G}|f(x)|\diff{x},
    \end{equation*}
    we can decompose $f=g+b=g+\sum_{j}b_j$ where 
    \begin{enumerate}
        \item $\|g\|_{L^\infty}\lesssim\gamma\alpha$ and $\|g\|_{L^1}\lesssim\|f\|_{L^1}$.
        \item $\operatorname{supp}(b_j)\subseteq I_j=B(x_j,r_j)$ where 
        \begin{equation*}
            \int_{I_j}b_j(x)\diff{x}=0.
        \end{equation*}
        \item Any component $b_j$ satisfies the $L^1$-estimate
        \begin{equation*}
            \|b_j\|_{L^1}\lesssim(\gamma\alpha)|I_j|.
        \end{equation*}
        \item The sequence $\{|I_j|\}_{j}\in l^{1}$ and 
        \begin{equation*}
            \sum_{j}|I_j|\lesssim\frac{\|f\|_{L^1}}{\gamma\alpha}.
        \end{equation*}
        \item 
        \begin{equation*}
            \|b\|_{L^1}\leq\sum_{j}\|b_j\|_{L^1}\lesssim\|f\|_{L^1}.
        \end{equation*}
        \item There exists $M\in\mathbb{N}$, such that every $x\in G$ belongs to at most to $M$ balls of the collection $\{I_j\}_j$.
    \end{enumerate}
\end{theorem}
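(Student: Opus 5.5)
The plan is to reduce the statement to the Coifman--Weiss Calderón--Zygmund decomposition on spaces of homogeneous type. First I will check that $G$ qualifies. Equip $G$ with the Riemannian distance induced by the bi-invariant metric $(\cdot,\cdot)_g$ and the normalized Haar measure. Then $|B(x,r)|\asymp r^n$ for $0<r\le \operatorname{diam}(G)$, uniformly in $x$ by left invariance, and $B(x,r)=G$ for $r>\operatorname{diam}(G)$. Hence the measure is doubling, and $(G,d,\diff x)$ is a space of homogeneous type. Two consequences follow from the standard Vitali covering argument. The centred Hardy--Littlewood maximal function $\mathcal{M}$ is of weak type $(1,1)$. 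The Lebesgue differentiation theorem also holds, so $|f|\le \mathcal{M}f$ almost everywhere.

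The decomposition itself goes as follows. Set $\Omega=\{x\in G:\mathcal{M}f(x)>\gamma\alpha\}$, which is open. By weak $(1,1)$ we have $|\Omega|\lesssim \|f\|_{L^1}/(\gamma\alpha)$. The hypothesis $\gamma\alpha>|G|^{-1}\int_G|f|$ guarantees $\Omega\neq G$: every ball of radius larger than $\operatorname{diam}(G)$ is all of $G$, so the averages over such balls are $<\gamma\alpha$. In particular $\Omega^c\neq\emptyset$, and the quantity $\operatorname{dist}(x,\Omega^c)$ is finite and positive on $\Omega$.

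Next I take a Whitney covering of $\Omega$. This is a family of balls $I_j=B(x_j,r_j)$ with the following properties:
\begin{itemize}
\item $r_j=c\operatorname{dist}(x_j,\Omega^c)$ for a small fixed $c$;
\item $\Omega=\bigcup_j I_j$;
\item the balls $\frac15 I_j$ are pairwise disjoint;
\item every point lies in at most $M$ of the balls $I_j$;
\item a fixed dilate $C_0I_j$ meets $\Omega^c$.
\end{itemize}
Such a covering is obtained from a maximal disjoint subfamily of Whitney balls via the Vitali lemma, with bounded overlap coming from doubling together with the comparability of radii of intersecting balls. Then I build a partition of unity $\{\varphi_j\}$ on $\Omega$ with $\operatorname{supp}\varphi_j\subseteq I_j$, $0\le\varphi_j\le1$, $\sum_j\varphi_j=\mathbbm{1}_\Omega$, and $\varphi_j\gtrsim 1$ on $\frac15 I_j$. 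For this I set $\varphi_j=\psi_j/\sum_k\psi_k$, where $\psi_j$ are Lipschitz bumps equal to $1$ on $\frac15 I_j$ and supported in $I_j$. The lower bound on $\frac15 I_j$ uses bounded overlap, and it gives $\int\varphi_j\gtrsim|I_j|$.

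With these pieces, define
\[
b_j=(f-c_j)\varphi_j,\qquad c_j=\frac{\int f\varphi_j}{\int\varphi_j},\qquad g=f\mathbbm{1}_{\Omega^c}+\sum_j c_j\varphi_j .
\]
Then $f=g+\sum_j b_j$, each $b_j$ is supported in $I_j$, and $\int b_j=0$. This gives (2) and (6).

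The estimates follow. By doubling and $C_0I_j\cap\Omega^c\neq\emptyset$,
\[
|c_j|\lesssim |I_j|^{-1}\int_{I_j}|f|\lesssim |C_0I_j|^{-1}\int_{C_0I_j}|f|\le\gamma\alpha .
\]
Consequently $\|b_j\|_{L^1}\le \int_{I_j}|f|+|c_j||I_j|\lesssim\gamma\alpha|I_j|$, which is (3). Bounded overlap gives $\sum_j|I_j|\le M|\Omega|\lesssim\|f\|_{L^1}/(\gamma\alpha)$, which is (4). For (5), $\sum_j\|b_j\|_{L^1}\le 2\sum_j\int|f|\varphi_j\le 2\|f\|_{L^1}$. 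For (1), on $\Omega^c$ we have $|g|=|f|\le\mathcal{M}f\le\gamma\alpha$ almost everywhere. On $\Omega$ we have $|g|\le\sum_j|c_j|\varphi_j\lesssim\gamma\alpha$. Finally $\|g\|_{L^1}\le\|f\|_{L^1}+\|b\|_{L^1}\lesssim\|f\|_{L^1}$.

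I expect the main technical step to be the Whitney covering with uniform bounded overlap and the partition of unity satisfying $\int\varphi_j\gtrsim|I_j|$. Locally this is the Euclidean argument. The uniformity of constants comes from compactness and left invariance of the metric and measure, which makes $|B(x,r)|\asymp r^n$ hold uniformly in $x$ for $r\le\operatorname{diam}(G)$. Alternatively, I can simply cite Coifman--Weiss for spaces of homogeneous type.
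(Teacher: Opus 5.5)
Your overall route is the paper's: the paper's proof is just a citation of Coifman--Weiss. Most of your sketch (Whitney balls, partition of unity, the bounds on $c_j$, $b_j$ and $g$) is fine. There is, however, a genuine gap where you claim that the hypothesis $\gamma\alpha>|G|^{-1}\int_G|f|$ forces $\Omega\neq G$.

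Your reason only shows that balls of radius $>\operatorname{diam}(G)$ have average $<\gamma\alpha$. But $\mathcal{M}f(x)$ is a supremum over all radii, and small balls around every point can have large averages. For example, take $G=\mathbb{T}$ with $|G|=1$, and let $f$ consist of $N\geq 2$ equally spaced narrow bumps of mass $1/N$. If $s/N$ ($0\le s\le \frac{1}{2}$) is the distance from $x$ to the nearest bump, the balls of radius slightly larger than $s/N$ and $(1-s)/N$ have averages at least close to $1/(2s)$ and $1/(1-s)$. Hence $\mathcal{M}f\geq \frac{3}{2}-o(1)$ everywhere, while $\int|f|=1$. With $\gamma\alpha=\frac{5}{4}$ you get $\Omega=G$ and $\Omega^c=\emptyset$. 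Then $\operatorname{dist}(x,\Omega^c)$, the Whitney radii, and the condition $C_0I_j\cap\Omega^c\neq\emptyset$ all break down, and the last is exactly what you use to get $|c_j|\lesssim\gamma\alpha$. Weak type $(1,1)$ only gives $|\Omega|\le C_{\mathcal{M}}\|f\|_{L^1}/(\gamma\alpha)$, which is $<|G|$ only when $\gamma\alpha>C_{\mathcal{M}}|G|^{-1}\|f\|_{L^1}$.

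The fix is a case distinction. If $\Omega\neq G$, your construction goes through as written. If $\Omega=G$, use the trivial decomposition $g\equiv f_G:=|G|^{-1}\int_G f$, $b_1=f-f_G$, $I_1=B(e,R)=G$ with $R>\operatorname{diam}(G)$. Then:
\begin{enumerate}
\item $\|g\|_{L^\infty}\le |G|^{-1}\|f\|_{L^1}<\gamma\alpha$; this is where the hypothesis is really needed.
\item $\|g\|_{L^1}\le\|f\|_{L^1}$ and $\int b_1=0$.
\item $\|b_1\|_{L^1}\le 2\|f\|_{L^1}<2\gamma\alpha|I_1|$, and the overlap constant is $M=1$.
\item Property (4) follows from $|I_1|=|G|=|\Omega|\lesssim \|f\|_{L^1}/(\gamma\alpha)$, by the weak type $(1,1)$ bound for $\mathcal{M}$.
\end{enumerate}

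Two smaller points. First, when you pass from $C_0I_j\cap\Omega^c\neq\emptyset$ to a bound by $\gamma\alpha$ using the centred maximal function, you must enlarge $C_0I_j$ to a ball centred at a point of $\Omega^c$. This gives $\lesssim\gamma\alpha$ rather than $\le\gamma\alpha$, which suffices. Second, the bumps $\psi_j$ should be strictly positive on all of $I_j$, not merely supported there. Otherwise $\sum_k\psi_k>0$ on $\Omega$, and hence $\sum_j\varphi_j=\mathbbm{1}_\Omega$, is not guaranteed.
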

\begin{proof}
    See \cite{7}.
\end{proof}
\begin{remark}\normalfont
We denote by $\Omega=\{x\in G: m_f(x)>\alpha\gamma\}$, where 
    \begin{equation*}
        m_f(x)=\sup_{B \ni x}\frac{1}{|B|}\int_{B}|f(y)|\diff{y}.
    \end{equation*}
Here $B$ ranges over balls that contain $x$. From the Calderón-Zygmund decomposition we have $\Omega=\bigcup_{j=1}^{\infty}B(x_j,r_j)$.
\end{remark}
\subsection{The spaces $BMO$ and $H^1$  on compact Lie groups}
Now we introduce the $BMO$ space on a compact Lie group as well as the Hardy space $H^1(G).$  We will fix the geodesic distance $|\cdot|$ on $G.$  As usual, the ball of radius $r>0,$ is defined as 
\[ 
    B(x,r)=\{y\in G:|y^{-1}x|<r\}.
\]Then the $BMO$ space on $G,$ which we denote by ${BMO}(G),$ is the space of locally integrable functions $f$ satisfying
\[ 
    \Vert f\Vert_{{BMO}(G)}:=\sup_{\mathbb{B}}\frac{1}{|\mathbb{B}|}\int\limits_{\mathbb{B}}|f(x)-f_{\mathbb{B}}|dx<\infty,\textnormal{ where  } f_{\mathbb{B}}:=\frac{1}{|\mathbb{B}|}\int\limits_{\mathbb{B}}f(x)dx,
\]
and $\mathbb{B}$ ranges over all balls $B(x_{0},r),$ with $(x_0,r)\in G\times (0,\infty).$ 

On the other hand, the Hardy space ${H}^{1}(G)$ will be defined via the atomic decomposition. Thus, $f\in {H}^{1}(G)$ if and only if $f$ can be expressed as $$f=\sum_{j=1}^\infty c_{j}a_{j},$$ where $\{c_j\}_{j=1}^\infty$ is a sequence in $\ell^1(\mathbb{N}),$ and every function $a_j$ is an atom, i.e., $a_j$ is supported in some ball $B_j,$ $a_j$ satisfies the cancellation property $$\int\limits_{B_j}a_{j}(x)dx=0,$$ and 
\[ 
    \Vert a_j\Vert_{L^\infty(G)}\leqslant \frac{1}{|B_j|}.
\] The norm $\Vert f\Vert_{{H}^{1}(G)}$ is the infimum over  all possible series $\sum_{j=1}^\infty|c_j|.$ Furthermore ${BMO}(G)$ is the dual space of ${H}^{1}(G)$. This can be understood in the following sense:
\begin{itemize}
    \item[(a)] if $\phi\in {BMO}(G), $ then $$\Phi: f\mapsto \int\limits_{G}f(x)\phi(x)dx,$$ admits a bounded extension on ${H}^{1}(G).$
    \item[(b)] Conversely, every continuous linear functional $\Phi$ on ${H}^{1}(G)$ arises as in $\textnormal{(a)}$ with a unique element $\phi\in {BMO}(G).$
\end{itemize} The norm of $\phi$ as a linear functional on ${H}^{1}(G)$ is equivalent with the ${BMO}(G)$-norm. Important properties of the ${BMO}(G)$ and the ${H}^{1}(G)$ norms are the following,
\begin{equation}\label{BMOnormduality}
 \Vert f \Vert_{{BMO}(G)}  =\sup_{\Vert g\Vert_{{H}^{1}(G)}=1} 
\left| \int\limits_{G}f(x)g(x)dx\right|,\end{equation}
\begin{equation}\label{BMOnormduality'}\Vert g \Vert_{{H}^{1}(G)}  =\sup_{\Vert f\Vert_{{BMO}(G)}=1} 
\left| \int\limits_{G}f(x)g(x)dx\right|.
\end{equation} 
 The Fefferman-Stein interpolation theorem in this case can be stated as follows (see e.g. Carbonaro, Mauceri and Meda \cite{CMM2009}). 
\begin{theorem}
Let $G$ be a compact Lie group.  For every $\theta\in (0,1),$ we have,
\begin{itemize}
    \item If $p_\theta=\frac{2}{1-\theta},$ then $(L^2(G),{BMO}(G))_{\theta}=L^{p_\theta}(G).$ 
    \item If $p_\theta=\frac{2}{2-\theta},$ then $({H}^{1}(G),L^2(G))_{\theta}=L^{p_\theta}(G).$ 
\end{itemize}
\end{theorem}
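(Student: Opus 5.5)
\emph{Proposal.} Since this statement is the Fefferman--Stein interpolation theorem on $G$, we would prove it by the classical route: viewing $G$ with its geodesic distance and normalized Haar measure as a space of homogeneous type. The measure is doubling because $|B(x,r)|\asymp r^n$ for $r\le \operatorname{diam}(G)$, and $|B(x,r)|=|G|$ for larger $r$. To account for the finite measure, we equip $BMO(G)$ with the norm $\|f\|_{BMO}+|f_G|$, where $f_G=\int_G f$.

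For the first item, one inclusion is immediate. Since $L^\infty(G)\hookrightarrow BMO(G)$ continuously, the Riesz--Thorin identification $(L^2,L^\infty)_\theta=L^{p_\theta}$ gives $L^{p_\theta}\hookrightarrow (L^2,BMO)_\theta$. For the reverse inclusion, we would linearize the sharp maximal function
\[
f^\#(x)=\sup_{B\ni x}\frac{1}{|B|}\int_B|f-f_B|.
\]
Choose measurably, for each $x$, a ball $B(x)\ni x$ and a function $\eta(x,\cdot)$ with $|\eta|\le1$, and set
\[
\mathcal{T}f(x)=\frac{1}{|B(x)|}\int_{B(x)}(f(y)-f_{B(x)})\eta(x,y)\,dy .
\]
This operator satisfies $\|\mathcal{T}f\|_{L^2}\lesssim\|f\|_{L^2}$ (it is dominated by $2Mf$, where $M$ is the Hardy--Littlewood maximal operator), and $\|\mathcal{T}f\|_{L^\infty}\le\|f\|_{BMO}$. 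Now take $f=F(\theta)$ with $F$ admissible for the complex method. We pair $\mathcal{T}F(z)$ plus $F(z)_G$ with simple functions $g_z$ depending analytically on $z$, and apply the three-lines lemma. This yields
\[
\|f^\#\|_{L^{p_\theta}}+|f_G|\lesssim\|f\|_{(L^2,BMO)_\theta},
\]
with constants independent of the choice of $B(x)$ and $\eta$, so the bound holds for $f^\#$ itself by taking the supremum over linearizations.

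The key step, which we expect to be the main obstacle, is the Fefferman--Stein inequality on $G$:
\[
\|f\|_{L^p}\lesssim\|f^\#\|_{L^p}+|f_G| ,\qquad 2\le p<\infty .
\]
We would first prove it for $f\in L^2$, so that all quantities are finite. The proof is a good-$\lambda$ argument, using the decomposition of Theorem \ref{CZ} applied to $Mf$ at heights $\lambda>\lambda_0\sim\|f\|_{L^1}$; this restriction on $\lambda$ is exactly where the condition $\alpha\gamma>\|f\|_{L^1}/|G|$ enters. The argument gives
\[
|\{Mf>2\lambda,\ f^\#\le\epsilon\lambda\}|\le C\epsilon\,|\{Mf>\lambda\}| .
\]
Low heights $\lambda\le\lambda_0$ contribute at most $C\|f\|_{L^1}^p$. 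On a finite-measure space we cannot absorb $\|f\|_{L^1}$ directly, so we bound it by $\|f-f_G\|_{L^1}+|f_G|$. Then $\|f-f_G\|_{L^1}$ is controlled by $\|f^\#\|_{L^1}\le\|f^\#\|_{L^p}$, using the single ball $B=G$. A density argument then extends the inequality from $L^2$ to the whole interpolation space, which completes the first item.

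For the second item we would use duality. $L^2\cap H^1$ is dense in both $H^1(G)$ and $L^2(G)$, and $L^2$ is reflexive. Hence the duality theorem for complex interpolation, together with the identification $(H^1)'=BMO$ recalled above, gives
\[
(H^1,L^2)_\theta{}'=(BMO,L^2)_{\theta}=(L^2,BMO)_{1-\theta}=L^{2/\theta}.
\]
By the first item this identifies $(H^1,L^2)_\theta$ with $L^{(2/\theta)'}=L^{2/(2-\theta)}=L^{p_\theta}$ up to equivalent norms, and the inclusion is checked using $H^1\hookrightarrow L^1$.
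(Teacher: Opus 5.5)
The paper does not prove this statement; it quotes it from Carbonaro--Mauceri--Meda \cite{CMM2009}. Your route for the first item is the classical one and is sound in outline: you linearize $f^\#$, interpolate the linearized operator between $L^2\to L^2$ and $BMO\to L^\infty$, and then prove a good-$\lambda$ Fefferman--Stein inequality on the space of homogeneous type $G$.

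\textbf{The gap is in the duality step for the second item, and it concerns constants.} With $H^1(G)$ as defined in the paper, every atom has integral zero, even one supported on a ball containing all of $G$. So every element of $H^1(G)$ has mean zero, and $H^1\cap L^2\subseteq\{g\in L^2(G):\int_G g=0\}$, which is a closed hyperplane of $L^2(G)$.
\begin{itemize}
\item Your claim that $H^1\cap L^2$ is dense in $L^2(G)$ is therefore false, and Calder\'on's duality theorem does not apply.
\item The dual of this $H^1$ is $BMO$ modulo constants, not $BMO$ with the norm $\|f\|_{BMO}+|f_G|$ that you fixed for the first item. So the identification $(H^1)'=BMO$ you invoke is inconsistent with your own normalization.
\item The conclusion itself fails for this $H^1$. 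Since $H^1\cap L^2$ is dense in $(H^1,L^2)_\theta$, and $g\mapsto\int_G g$ is continuous on $H^1+L^2\subseteq L^1$, the space $(H^1,L^2)_\theta$ consists only of mean-zero functions and cannot equal $L^{p_\theta}(G)$.
\end{itemize}
The repair is the Coifman--Weiss convention for spaces of finite measure: admit $|G|^{-1}1_G$ as an additional atom, i.e.\ replace $H^1$ by $H^1\oplus\mathbb{C}$ normed by $\|g-g_G\|_{H^1}+|g_G|$. Its dual is exactly $BMO$ with your norm, and it contains $L^\infty$, which is dense in $L^2$ and in $L^{p_\theta}$. With this change your duality argument and the comparison of norms on a common dense subspace go through. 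Alternatively, prove $(H^1,L^2_0)_\theta=L^{p_\theta}_0$ for the mean-zero spaces and add the constants back.

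There are also some smaller points in the first item.
\begin{itemize}
\item The absorption step of the good-$\lambda$ argument needs $\int_{\lambda_0}^\infty\lambda^{p-1}|\{Mf>\lambda\}|\,d\lambda<\infty$. Knowing $f\in L^2$ does not give this for $p=p_\theta>2$. On $G$ you should instead truncate the $\lambda$-integral at a finite height $N$ (the truncated integral is finite because $|G|<\infty$), absorb, and let $N\to\infty$.
\item Your density argument is unnecessary. John--Nirenberg on the ball $G$ gives $BMO(G)\hookrightarrow L^2(G)$, hence $(L^2,BMO)_\theta\subseteq L^2+BMO=L^2$.
\item The good-$\lambda$ step needs the Whitney property that a fixed dilate of each covering ball meets $\{Mf\le\lambda\}$. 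This is not among the properties listed in Theorem \ref{CZ}, so take it from the Coifman--Weiss Whitney covering directly. You should also expect $A\lambda$ in place of $2\lambda$, with $A$ depending on the doubling constant.
\end{itemize}
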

\section{Proof of the main theorems}\label{Section:3}

\subsection{Local estimates for kernels} An important feature in the proof of Theorem \ref{weak} is to obtain suitable kernel estimates of the operators. Before we prove these estimates, we would like to present some remarks that apply to this section. 
\begin{remark}
   The system of balls $B(x,R)$ in $G$ is defined by the geodesic distance, meaning 
\begin{equation*}
    B(x,R)=\{y\in G:d(x,y)<R\},
\end{equation*}
where $d(x,y)=|y^{-1}x|_{geod}$. Furthermore, in our proofs, we often make a local argument using the local diffeomorphism $\exp\colon\mathfrak{g}\to G$. For this reason, we mention that there exists a neighborhood of zero and constants $C_1,C_2>0$ such that (see e.g. \cite[Page 2894]{RT:JFA:2011})
\begin{equation}\label{ineq}
    C_1|\exp(Y)|_{geod}\leq|Y|_{euclid}\leq C_2|\exp(Y)|_{geod}.
\end{equation}
From inequality \eqref{ineq}, it follows that when we have balls $B(e,R)$ in $G$ for sufficiently small radius $R$, they correspond to balls around zero in $\mathfrak{g}\cong\mathbb{R}^n$ with some radius $R'$ proportional to $R$. We will denote this relationship by $R\sim R'$. 
\end{remark}
\begin{remark}\label{Remark:diff:op}
   From now on, we fix a strongly admissible collection of difference operators
\[
\Delta = \Delta_Q = \{\Delta_{q_1}, \dots, \Delta_{q_l}\},
\]
that satisfies the Leibniz property. Such a collection always exists (see \cite[Corollary 5.13]{12} and Remark \ref{remarkD} below), and all subsequent statements involving $$ \Delta_q^\alpha:= \Delta_{q^{\alpha_1}_1\cdots q^{\alpha_l}_l }$$ are understood with respect to this fixed system.  We choose this notation to emphasize the order $|\alpha|$ of the difference operator in our further analysis.
\end{remark}
\begin{remark}\label{remarkD}
Matrix components of unitary representations induce difference operators, see \cite{22}. Indeed, if $\xi_{1},\xi_2,\cdots, \xi_{k},$ are  fixed irreducible and unitary  representation of $G$, which not necessarily belong to the same equivalence class, then each coefficient of the matrix
\begin{equation}
 \xi_{\ell}(g)-I_{d_{\xi_{\ell}}}=[\xi_{\ell}(g)_{ij}-\delta_{ij}]_{i,j=1}^{d_{\xi_\ell}},\, \quad g\in G, \,\,1\leq \ell\leq k,
\end{equation} 
that is each function 
$q^{\ell}_{ij}(g):=\xi_{\ell}(g)_{ij}-\delta_{ij}$, $ g\in G,$ defines a difference operator
\begin{equation}\label{Difference:op:rep}
    \mathbb{D}_{\xi_\ell,i,j}:=\mathscr{F}(\xi_{\ell}(g)_{ij}-\delta_{ij})\mathscr{F}^{-1}.
\end{equation}
We can fix $k\geq \mathrm{dim}(G)$ of these representations in such a way that the corresponding  family of difference operators is admissible, it  means that, 
\begin{equation*}
    \textnormal{rank}\{\nabla q^{\ell}_{i,j}(e):1\leqslant \ell\leqslant k \}=\textnormal{dim}(G).
\end{equation*}
To define higher order difference operators of this kind, let us fix a unitary irreducible representation $\xi_\ell$.
Since the representation is fixed we omit the index $\ell$ of the representations $\xi_\ell$ in the notation that will follow.
Then, for any given multi-index $\alpha\in \mathbb{N}_0^{d_{\xi_\ell}^2}$, with 
$|\alpha|=\sum_{i,j=1}^{d_{\xi_\ell}}\alpha_{i,j}$, we write
$$\mathbb{D}^{\alpha}:=\mathbb{D}_{\xi_{\ell},1,1}^{\alpha_{11}}\cdots \mathbb{D}^{\alpha_{d_{\xi_\ell}d_{\xi_\ell}}}_{\xi_\ell,d_{\xi_\ell},d_{\xi_\ell}}
$$ 
for a difference operator of order $|\alpha|$.
\end{remark}
 Let us establish the following lemma, which will be useful in proving the kernel estimates for small radii. Here we will use the notation $\Delta_q^\alpha$ in Remark \ref{Remark:diff:op}.

\begin{lemma}\label{smoothing}
Let $G$ be a compact Lie group of topological dimension $n \ge 1$, and let 
$T \in \Psi^{m}_{\rho,\delta}(G)$ with
\[
m \le -\frac{n}{2}(1-\rho),
\qquad 0 \le \delta \le \rho \le 1, \quad \rho\neq0,\;\delta \ne 1.
\]
Let $\varphi \in C_c^\infty(\mathbb{R}^n)$ be supported in $[\tfrac{1}{2},1]$, define for $t \ge 1$ 
\[
\sigma_t(x,\zeta) = \sigma(x,\zeta)\,\varphi(\langle \zeta \rangle/t).
\]
Then for every $\alpha \in \mathbb{N}^n$ and every $r \in \mathbb{R}$, there exists a constant $C_r > 0$ such that
\[
\sup_{(x,[\zeta])\in G\times\widehat{G}}\|\Delta_q^\alpha \sigma_t(x,\zeta)\|_{\mathrm{op}} \le C_r t^r.
\]
\end{lemma}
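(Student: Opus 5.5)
The plan is to reduce the lemma to a single sharp power of $t$, namely
\[
\sup_{(x,[\zeta])}\|\Delta_q^\alpha \sigma_t(x,\zeta)\|_{\mathrm{op}}\le C_\alpha\, t^{\,m-\rho|\alpha|},\qquad t\ge 1,
\]
and then read off the bound $C_r t^r$. This reading-off works for $r=m-\rho|\alpha|$, and then trivially for every larger $r$ since $t\ge1$. I want to be explicit about the scope, because it bears on the words ``every $r\in\mathbb{R}$''. The claim cannot hold for arbitrarily negative $r$. Take $T=I$, with $\sigma\equiv I\in S^0_{1,0}$, $\rho=1$, $m=0$, and $\alpha=0$. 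Then $\sup\|\sigma_t\|_{\mathrm{op}}=\sup|\varphi|$, which does not decay in $t$. So the exponent $r$ must be tied to $(m,\rho,\alpha)$. I will therefore prove the statement in the form $r=r(\alpha)=m-\rho|\alpha|$, which is the form the subsequent kernel estimates for small radii need. Pushing the exponent below $m-\rho|\alpha|$ would require information not contained in the hypotheses.

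\textbf{Step 1: the cut-off is a uniform $S^0_{1,0}$ symbol localised at scale $t$.} I will check that $\chi_t(\zeta):=\varphi(\langle\zeta\rangle/t)$ satisfies, uniformly in $t\ge1$,
\[
\|\Delta_q^{\gamma}\chi_t(\zeta)\|_{\mathrm{op}}\le C_\gamma\,\langle\zeta\rangle^{-|\gamma|},
\]
and that $\Delta_q^\gamma\chi_t(\zeta)=0$ unless $\langle\zeta\rangle\asymp t$. Since $\varphi(\cdot/t)$ has derivatives $O(t^{-k})$ on $[t/2,t]$, this is the standard fact that functions of $\langle\zeta\rangle$ with symbol-type derivatives lie in $S^0_{1,0}(G)$, with seminorms controlled by those of $\varphi(\cdot/t)$ as a symbol on $\mathbb{R}$. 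I would cite the functional calculus of \cite{12,17} for this. For the support claim, I will use the representation-coefficient difference operators of Remark~\ref{remarkD}. Multiplication by a matrix coefficient of a fixed $\xi_\ell$ shifts $[\zeta]$ only to constituents of $\xi_\ell\otimes\zeta$, whose $\langle\cdot\rangle$ differs from $\langle\zeta\rangle$ by $O(1)$. This keeps the support in $\langle\zeta\rangle\asymp t$ once $t$ is large; small $t$ is trivial.

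\textbf{Step 2: Leibniz and counting powers.} By the Leibniz property of the fixed strongly admissible family in Remark~\ref{Remark:diff:op},
\[
\Delta_q^\alpha\sigma_t=\sum_{|\alpha_1|+|\alpha_2|\ge|\alpha|} c_{\alpha_1\alpha_2}\,(\Delta_q^{\alpha_1}\sigma)(\Delta_q^{\alpha_2}\chi_t),
\]
a finite sum with bounded coefficients. Using the $S^m_{\rho,\delta}$ bounds and Step 1, each term is bounded by
\[
\langle\zeta\rangle^{m-\rho|\alpha_1|-|\alpha_2|}\le \langle\zeta\rangle^{m-\rho|\alpha|},
\]
because $\rho\le1$. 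Since these bounds are only nonzero where $\langle\zeta\rangle\asymp t$, each term is $\lesssim t^{m-\rho|\alpha|}$, with constants independent of $x$ and $t$.

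\textbf{Main obstacle.} The delicate step is Step 1, specifically making uniform in $t$ both the $S^0_{1,0}$ seminorms of $\chi_t$ and the localisation of $\Delta_q^{\gamma}\chi_t$ to $\langle\zeta\rangle\asymp t$. Difference operators are not local in $[\zeta]$, so I would argue through the concrete operators $\mathbb{D}_{\xi_\ell,i,j}$ and the tensor-product decomposition, as above. Once this is settled, Step 2 is bookkeeping.
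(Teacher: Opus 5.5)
Your objection is correct: the lemma is false as stated, and your example is a valid counterexample. One point of phrasing needs fixing. Do not claim $\sup_{[\zeta]}|\varphi(\langle\zeta\rangle/t)|=\sup|\varphi|$, because the spectrum is discrete; at $t=1$ the supremum is $0$, since $\varphi$ vanishes on $[1,\infty)$. Instead, pick $s_0$ with $\varphi(s_0)\neq 0$ and any $[\zeta_k]$ with $\langle\zeta_k\rangle\to\infty$, and set $t_k=\langle\zeta_k\rangle/s_0>1$. Then $\|\sigma_{t_k}(x,\zeta_k)\|_{\mathrm{op}}=|\varphi(s_0)|$.

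The same choice with $\sigma=\langle\zeta\rangle^{m}I_{d_\zeta}\in S^m_{1,0}(G)\subseteq S^m_{\rho,\delta}(G)$ shows the failure is not special to $\rho=1$: for $\alpha=0$, no exponent below $m$ is possible. The paper itself applies the lemma with $\alpha=0$ (when justifying $K=\int_1^\infty K_t\,dt/t$), and its normalisation forces $\varphi\not\equiv 0$, so this is squarely in scope.

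The paper's proof breaks at its first line. It quotes \cite[Lemma 6.8]{12} for $\|\Delta_q^{\alpha}\varphi(\langle\zeta\rangle/t)I_{d_\zeta}\|_{\mathrm{op}}\le C_r t^r$ for every $r$, and for $\alpha=0$ the same sequence $t_k$ refutes this. It then discards the only genuine decay by bounding $\langle\zeta\rangle^{m-\rho|\alpha_1|}\le 1$.

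Your argument for the corrected bound $C_\alpha t^{m-\rho|\alpha|}$ shares the paper's skeleton: the Leibniz rule splits $\Delta_q^\alpha\sigma_t$ into products of differences of $\sigma$ and differences of the cut-off. The difference is that you use the correct input on each factor. For the cut-off you use uniform-in-$t$ $S^0_{1,0}$ bounds $\langle\zeta\rangle^{-|\gamma|}$ together with localisation to $\langle\zeta\rangle\gtrsim t$. For the symbol you retain the decay $\langle\zeta\rangle^{m-\rho|\alpha_1|}$ and evaluate it on that support. This is sound, and the bound is sharp.

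Two small remarks on your argument. Only the lower bound $\langle\zeta\rangle\gtrsim t$ on the support is actually used, since $m-\rho|\alpha|\le 0$. The exact localisation via $\xi_\ell\otimes\zeta$ relies on the matrix-coefficient operators of Remark~\ref{remarkD}. That is the family the paper fixes, and for it your form of the Leibniz rule, with $|\alpha_1|+|\alpha_2|\ge|\alpha|$, is the correct one.

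You overreach in claiming that $t^{m-\rho|\alpha|}$ is the form the later kernel estimates need. As written, the proof of Theorem~\ref{kernel_estimates} for $R<1$ uses arbitrarily negative $r$. It takes $r<-n$ to justify $K=\int_1^\infty K_t\,dt/t$ by absolute convergence, and $r=-3n-2\rho|\alpha|+\rho n-2$ in the term $I$.

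With your bound at the critical order $m=-\frac n2(1-\rho)$, the absolute-convergence step fails because $n+m>0$. The weighted $L^2$ argument for $I$ then yields only $\int_{B(z,2R^\rho)^c}|K_t(x,y)-K_t(x,z)|\,dx\lesssim 1$ for each $t$. Integrating $dt/t$ over $[1,1/R]$ gives a total of order $\log(1/R)$, not a uniform constant.

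Closing this requires using the cancellation in $K_t(x,y)-K_t(x,z)$ for $d(y,z)\le R$, for example a mean-value estimate in $y$ that gains a factor $tR$. The term $II$, by contrast, uses only bounds of exactly your form. So your lemma is the right replacement, but it does not slot into the paper's argument unchanged.
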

\begin{proof}
    By \cite[Lemma 6.8]{12} we have that for every $r\in\mathbb{R}$ there exists some $C_r>0$ such that
    \begin{equation*}
        \|\Delta_{q}^{\alpha}\varphi\left(\frac{\langle\zeta\rangle}{t} \right)I_{d_\zeta}\|_{op}\leq C_rt^r.
    \end{equation*}
    Using the above and the Leibniz rule (see Remark \ref{Remark:diff:op}) we see that
    \begin{equation*}
        \begin{aligned}
            \|\Delta_{q}^{\alpha}\sigma_t(x,\zeta)\|_{op}&=\sum_{\alpha_1+\alpha_2=\alpha}C_{\alpha_1,\alpha_2}||\Delta_{q}^{\alpha_1}\sigma(x,\zeta)||_{op}\|\Delta_{q}^{\alpha_2}\varphi\left(\frac{\langle\zeta\rangle}{t} \right)I_{d_\zeta}\|_{op}\\
            &\leq\sum_{\alpha_1+\alpha_2=\alpha}C_{\alpha_1,\alpha_2}||\Delta^{\alpha_1}_{q}\sigma(x,\zeta)||_{op}\cdot(C_rt^r)\\
            &\leq\sum_{\alpha_1+\alpha_2=\alpha}C_{\alpha_1,\alpha_2}\cdot C_{\alpha_1}\langle\zeta\rangle^{m-\rho|\alpha_1|}\cdot(C_rt^r).
        \end{aligned}
    \end{equation*}
    In the above we have that $m-\rho|\alpha_1|\leq 0$, and $\langle\zeta\rangle\geq1$, therefore $\langle\zeta\rangle^{m-\rho|\alpha_1|}\leq 1$. Since the sum is finite we can find a constant large enough we call this $C_r'$ such that 
    \begin{equation*}
       \sup_{(x,[\zeta])\in G\times\widehat{G}} \|\Delta_{q}^{\alpha}\sigma_t(x,\zeta)\|_{op}\leq C_r't^{r}.
    \end{equation*}
    The proof is complete. 
\end{proof}
Let us now prove the kernel estimates in Theorem \ref{kernel_estimates} in the form of the following theorem.
\begingroup
\renewcommand{\thetheorem}{1.1} 
\setcounter{theorem}{1}  
\begin{theorem}\label{kernel:lemma}
    Let $G$ be a compact Lie group of topological dimension $n\geq 1$. Consider $T\in\Psi^{m}_{\rho,\delta}(G)$, with
    \[
m \le -\frac{n}{2}(1-\rho),
\qquad 0 \le \delta \le \rho \le 1,\ \rho\neq0,\;\delta \neq 1.
\]
    For $R>0$, we have that the kernel of $T$ satisfies the following. For $y\in\overline{B(z, R)}$ we have
    \begin{itemize}
        \item $R\geq 1$ then
        $\int_{B(z, 2R)^c}|K(x,y)-K(x,z)|\diff{x}\leq C$;
         \item $R<1$ then $\int_{B(z, 2R^{\rho})^c}|K(x,y)-K(x,z)|\diff{x}\leq C$;
    \end{itemize} Here the constant $C>0,$ in the above estimates is independent of $R>0.$
\end{theorem}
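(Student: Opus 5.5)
We split into the two regimes of $R$ and write $K(x,y)=k_x(y^{-1}x)$, where $k_x$ is the right-convolution kernel associated with the symbol $\sigma(x,\cdot)$.

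\emph{The case $R\geq 1$.} This case is essentially trivial by compactness. Since $G$ has finite diameter, the estimate only needs to be checked when $B(z,2R)^c\neq\emptyset$, which forces $R$ to be bounded. We then bound
\[
\int_G|K(x,y)-K(x,z)|\,dx\le 2\sup_{w}\int_G|K(x,w)|\,dx .
\]
Proposition \ref{kernel} gives $|k_x(w)|\lesssim |w|^{-(n+m)/\rho}$ (or a logarithmic or bounded bound) whenever $n+m\ge 0$. This is integrable provided $(n+m)/\rho<n$, that is $m<-n(1-\rho)$. That condition does not follow from $m\le -\frac n2(1-\rho)$, so integrability of the singularity is not automatic. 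I would handle this by using the fact that $x$ ranges over the complement of $B(z,2R)$ with $R\ge1$, so that $|y^{-1}x|\gtrsim R\ge 1$. The kernel is therefore evaluated away from the identity, where it is bounded, and the integral is controlled by $|G|=1$ times a constant. If $2R$ exceeds the diameter of $G$, the region is empty and there is nothing to prove.

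\emph{The case $R<1$, Littlewood--Paley decomposition.} This is the main part, and I would follow the Alvarez--Hounie argument. Choose a dyadic partition of unity $1=\varphi_0(\langle\zeta\rangle)+\sum_{j\ge1}\varphi(\langle\zeta\rangle/2^j)$ and set $\sigma_{t}=\sigma\varphi(\langle\zeta\rangle/t)$ with $t=2^j$. Let $K_t$ be the corresponding kernels. It then suffices to bound
\[
\sum_j\int_{B(z,2R^\rho)^c}|K_{2^j}(x,y)-K_{2^j}(x,z)|\,dx .
\]

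\emph{The case $R<1$, two bounds for each piece.} For each piece we prove two estimates.

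\textbf{(a) Large $t$.} Apply Cauchy--Schwarz with the weight $|y^{-1}x|^{N}$ (equivalently $|q(\cdot)|^{2N}$, using strongly admissible $q$'s) on the complement of the ball:
\[
\int_{|z^{-1}x|>2R^\rho}|K_t(x,z)|\,dx\le \Bigl(\int |z^{-1}x|^{-2N}\Bigr)^{1/2}\Bigl(\int|q^{\alpha}k_{x,t}|^2\Bigr)^{1/2}.
\]
Here $|\alpha|=N>n/2$, and the first factor is $\lesssim R^{\rho(n/2-N)}$. By Plancherel, the second factor is $\bigl(\sum_{\langle\zeta\rangle\sim t}d_\zeta\|\Delta_q^\alpha\sigma_t\|_{HS}^2\bigr)^{1/2}$. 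Bounding the Hilbert--Schmidt norm by $d_\zeta^{1/2}$ times the operator norm, using the symbol estimate $t^{m-\rho N}$ (the refined form of Lemma \ref{smoothing} that keeps the factor $t^{m-\rho|\alpha|}$), and applying Theorem \ref{Weyl}, this factor is $\lesssim t^{m-\rho N+n/2}$. Since $m\le -\frac n2(1-\rho)$, piece (a) is at most $(tR)^{\rho(n/2-N)}$.

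\textbf{(b) Small $t$.} Use the mean value theorem in the second variable, $|K_t(x,y)-K_t(x,z)|\le R\sup|\partial K_t|$. The derivative brings down an extra factor $t$. Repeating the same Cauchy--Schwarz and Plancherel argument, now over all of $G$ with $N=0$ and a weight $N>n/2$ at scale $t^{-\rho}$, gives a bound $\lesssim Rt$, or $(Rt)^{\epsilon}$ after interpolating with (a).

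\emph{The case $R<1$, summing.} Sum (b) over $t\le R^{-1}$ and (a) over $t>R^{-1}$; both are geometric series bounded uniformly in $R$.

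\emph{Main obstacle.} The delicate step is (b). The weight must be chosen at the scale $t^{-\rho}$ rather than $R^\rho$, so that the $L^1$ norm of $K_t$ on all of $G$ stays bounded by $t^{m+n(1-\rho)/2}\lesssim1$. Differentiating in $y$ means differentiating the right-convolution kernel, and this also involves $x$-derivatives of the symbol (costing $t^{\delta}$). This is where $\delta\le\rho$ is needed, to keep the product $R\,t^{\max(1,\delta)}$ summable. I would first try right-invariant derivatives, which act only on $\zeta$ and produce a factor $\zeta(X)$ of size $\langle\zeta\rangle$, so that no $\delta$-loss occurs.
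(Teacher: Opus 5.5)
\textbf{Verdict.} Your proposal is correct in outline. It follows the paper for $R\ge1$ and for high frequencies, and takes a different, more robust route for low frequencies; two points in your sketch need fixing.

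\textbf{Comparison with the paper.} Your case $R\ge1$ and your high-frequency step (a) coincide with the paper's argument. The paper also bounds $|K|$ pointwise once $d(x,y)\ge R\ge1$. Its term $II$ (the range $t>1/R$) is exactly your weighted Cauchy--Schwarz plus Plancherel bound $(tR)^{\rho(\frac n2-N)}$.

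The genuine difference is the range $1\le t\le 1/R$.
\begin{itemize}
\item The paper's term $I$ uses the same weight $(1+t^{2\rho}d(x,z)^2)^N$ that you propose. However, it estimates $K_t(x,y)$ and $K_t(x,z)$ separately and never uses $d(y,z)\le R$. It gets a per-piece bound $t^{-1}$ only by invoking Lemma \ref{smoothing} with the very negative exponent $r=-3n-2\rho|\alpha|+\rho n-2$.
\item You instead exploit $d(y,z)\le R$ through the mean value theorem. This gains $R\,t\cdot t^{m+\frac n2(1-\rho)}\le Rt$, using only the honest bound $\|\Delta_q^\alpha\sigma_t\|_{op}\lesssim t^{m-\rho|\alpha|}$.
\end{itemize}
This is not merely an alternative. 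Lemma \ref{smoothing} cannot hold for $r<m$: take $\alpha=0$ and $\sigma(x,\zeta)=\langle\zeta\rangle^m I_{d_\zeta}$. Moreover, the paper's argument for $I$, applied to $K_t(\cdot,z)$ alone, would give $\sup_z\int_G|K(x,z)|\,dx<\infty$. That is $L^1$-boundedness of $T$, which is false in general at the critical order (think of oscillating multipliers $e^{i\langle\zeta\rangle^{1-\rho}}\langle\zeta\rangle^{-\frac n2(1-\rho)}$). So the cancellation you extract from $d(y,z)\le R$ is exactly what the estimate needs, and your route is the sound one.

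\textbf{First correction: your ``main obstacle'' is misdiagnosed.} Write $y=z\exp(Y)$ with $|Y|\lesssim R$, so that $K_t(x,y)=k_{x,t}(\exp(-Y)z^{-1}x)$. Differentiating along $s\mapsto z\exp(sY)$ then only hits the convolution variable. It produces the right-invariant derivative of $k_{x,t}$, whose symbol is $\sigma_t(x,\zeta)\,d\zeta(Y)$ with $\|d\zeta(Y)\|_{op}\lesssim|Y|\langle\zeta\rangle$. No $x$-derivative and no factor $t^{\delta}$ occur; in any case $t^{\max(1,\delta)}=t$ since $\delta<1$. Also, write the step as $\int_0^1\cdots\,ds$ and use Fubini, rather than a pointwise supremum inside the $x$-integral.

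\textbf{Second correction: the $x$-dependence enters in the Plancherel step.} This affects both (a) and (b), and the paper performs the step in the same way.
\begin{itemize}
\item The function $x\mapsto q^\alpha(y^{-1}x)k_{x,t}(y^{-1}x)$ has $x$ in both slots. Its $L^2$-norm is therefore not $\sum_{[\zeta]}d_\zeta\|\Delta_q^\alpha\sigma_t(w,\zeta)\|_{HS}^2$ for any frozen $w$.
\item A global Sobolev embedding in $w$ would cost $t^{\delta s}$ with $s>\frac n2$. For $\delta>0$ this destroys both dyadic sums.
\end{itemize}
Here is a clean fix.
\begin{itemize}
\item The function above equals $\operatorname{Op}(\Delta_q^\alpha\sigma_t)\bigl[\psi\bigl((1+\mathcal{L}_G)^{1/2}/t\bigr)\delta_y\bigr]$, where $\delta_y$ is the Dirac mass and $\psi$ is a cut-off equal to one on the frequency support. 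For the matrix-coefficient $q$'s of Remark \ref{remarkD}, that support stays in $\langle\zeta\rangle\sim t$.
\item The bracket has $L^2$-norm $\lesssim t^{n/2}$ by Theorem \ref{Weyl}.
\item The rescaled symbol $t^{-(m-\rho|\alpha|)}\Delta_q^\alpha\sigma_t$ is bounded in $S^0_{\rho,\delta}$ uniformly in $t$. Proposition \ref{L2}, whose operator norm is controlled by finitely many seminorms, then gives the needed bound $t^{m-\rho|\alpha|+\frac n2}$.
\end{itemize}
This is precisely where the hypotheses $\delta\le\rho$ and $\delta\ne1$ are used.
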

\endgroup
\begin{proof} Furthermore, since $m_1 \leq m_2$ implies $S^{m_1}_{\rho,\delta}(G) \subseteq S^{m_2}_{\rho,\delta}(G)$, it suffices to prove the result for the sharp order $-\frac{n(1-\rho)}{2}$.

Let us consider the case where $R\geq 1$. First we make the following observation, namely that if $x\in B(z,2R)^c$ then $x\in B(y,R)^c$. This follows simply by the triangle inequality and the fact $d(y,z)\leq R$,
   \begin{equation*}
       d(x,y)\geq d(x,z)-d(y,z)\geq 2R-R=R.
   \end{equation*}
Using the triangle inequality, we estimate 
\begin{equation*}
\begin{aligned}
    &\int_{B(z, 2R)^c}|K(x,y)-K(x,z)|\diff{x}\\
       &\leq\int_{B(z, 2R)^c}|K(x,y)|\diff{x}+\int_{B(z, 2R)^c}|K(x,z)|\diff{x}\\
       &\leq\int_{B(y,R)^c}|K(x,y)|\diff{x}+\int_{B(z, 2R)^c}|K(x,z)|\diff{x}.
\end{aligned}
\end{equation*}
 To estimate the above integrals we note that we are integrating over $B(y,R)^c$ and $B(z,2R)^c$. Therefore we have that $|y^{-1}x|\geq R\geq1$, $|z^{-1}x|\geq 2R\geq2$, so both $y^{-1}x\neq e$ and $z^{-1}x\neq e$. By Proposition \ref{kernel} the map $(x,y^{-1}x)\to K(x,y)$ satisfies the estimate
 \begin{equation*}
     |K(x,y)|\leq C\sup_{\pi\in\widehat{G}}\|\sigma(x,\pi)\|_{S^{m}_{\rho,a,b}}|y^{-1}x|^{-\frac{n+m}{\rho}}.
 \end{equation*}
 In our case $m=-n(1-\rho)/2$. Since $-(n+m)/\rho<0$, and $d(x,y)>R$ where $R\geq 1$, we have that $|y^{-1}x|^{-\frac{n+m}{\rho}}\leq 1$. So there exists some $C_1>0$ independent of $R$ such that $|K(x,y)|\leq C_1$, similarly we can find $C_2>0$ such that $|K(x,z)|\leq C_2$. Finally we can estimate, 
   \begin{equation*}
       \int_{B(z, 2R)^c}|K(x,y)-K(x,z)|\diff{x}\leq 2C|G|.
   \end{equation*}
   Here $C=\max\{C_1,C_2\}$. 
   
   Let us now consider the case where $R<1$. Let $0\leq\varphi\in C^{\infty}_{0}(\mathbb{R}^n)$ be supported in the interval $[\frac{1}{2},1]$, such that
   \begin{equation*}
       \int_{0}^{\infty}\varphi\left(\frac{1}{t}\right)\frac{\diff{t}}{t}= \int_{1}^{2}\varphi\left(\frac{1}{t}\right)\frac{\diff{t}}{t}=1.
   \end{equation*}
   Define $\sigma_t(x,\zeta)=\sigma(x,\zeta)\varphi(\langle\zeta\rangle/t)$, with kernel given by
   \begin{equation*}
       K_t(x,y)=\sum_{[\zeta]\in\widehat{G}}d_\zeta\operatorname{Tr}(\zeta(y^{-1}x)\sigma_t(x,\zeta)).
   \end{equation*}
   We have that 
   \begin{equation}
       K(x,y)=\int_{1}^{\infty}K_t(x,y)\frac{\diff{t}}{t}. 
       \label{kernel}
   \end{equation}
 To prove the above, we start by observing that for $[\zeta]\in\widehat{G}$ fixed, we have
\begin{equation*}
    \int_{1}^{\infty}\varphi\left(\frac{\langle\zeta\rangle}{t}\right)\frac{dt}{t}=\int_{1/\langle\zeta\rangle}^{\infty}\varphi\left(\frac{1}{s}\right)\frac{ds}{s}=\int_{1}^{2}\varphi\left(\frac{1}{s}\right)\frac{ds}{s}=1,
\end{equation*}
where we used $\langle \zeta \rangle \geq 1$ to conclude $[1,2] \subseteq \left[ \frac{1}{\langle \zeta \rangle}, \infty \right)$. We can now write
\begin{equation*}
    \begin{aligned}
        K(x,y)&=\sum_{[\zeta]\in\widehat{G}}d_\zeta\operatorname{Tr}(\zeta(y^{-1}x)\sigma(x,\zeta))\\
    &=\sum_{[\zeta]\in\widehat{G}}d_\zeta\operatorname{Tr}\left(\zeta(y^{-1}x)\sigma(x,\zeta) \int_{1}^{\infty}\varphi\left(\frac{\langle\zeta\rangle}{t}\right)\frac{dt}{t}\right)\\
    &=\sum_{[\zeta]\in\widehat{G}}d_\zeta\int_{1}^{\infty}\operatorname{Tr}(\zeta(y^{-1}x)\sigma_t(x,\zeta))\frac{dt}{t}\\
    &=\int_{1}^{\infty}\sum_{[\zeta]\in\widehat{G}}d_\zeta\operatorname{Tr}(\zeta(y^{-1}x)\sigma_t(x,\zeta))\frac{dt}{t}\\
    &=\int_{1}^{\infty}K_t(x,y)\frac{dt}{t}.
    \end{aligned}
\end{equation*}
It remains to justify the interchange of the sum and the integral. This follows from Fubini’s theorem, because we have that
\begin{equation*}
\begin{aligned}
    |d_\zeta\operatorname{Tr}(\zeta(y^{-1}x)\sigma_t(x,\zeta))|&\leq d_\zeta\|\zeta(y^{-1}x)\|_{HS}\|\sigma_t(x,\zeta)\|_{HS}\\
    &\leq d_\zeta^2\|\zeta(y^{-1}x)\|_{op}\|\sigma_t(x,\zeta)\|_{op}\\
    &= d_\zeta^2\|\sigma_t(x,\zeta)\|_{op}.
\end{aligned}
\end{equation*}
The above argument is analogous to the one used later in (\ref{trace}). Furthermore, we estimate 
\begin{equation*}
\begin{aligned}
    \int_{1}^{\infty}\sum_{[\zeta]\in\widehat{G}}d_\zeta^2||\sigma_t(x,\zeta)\|_{op}\frac{dt}{t}&\leq\int_{1}^{\infty}\sum_{\frac{t}{2}\leq\langle\zeta\rangle\leq t}d_\zeta^2\cdot C_r t^r\frac{dt}{t}\\
    &\leq\int_{1}^{\infty}C_r\cdot t^n\cdot t^r\frac{dt}{t}.
\end{aligned}
\end{equation*}
In the above, we have used Lemma \ref{smoothing} and the fact that $\varphi$ is supported in $[\frac{1}{2},1]$ to apply Theorem \ref{Weyl}. Finally, the last integral is finite if we choose $r<-n$. Next, we start by estimating the following expression 
   \begin{equation*}
\begin{aligned}
    &\int_{1}^{\infty}\int_{B(z, 2R^\rho)^c}|K_t(x,y)-K_t(x,z)|\diff{x}\frac{\diff{t}}{t}\\
       &=\int_{1}^{1/R}\int_{B(z,2R^\rho)^c}|K_t(x,y)-K_t(x,z)|\diff{x}\frac{\diff{ t}}{t}\\
       &+\int_{1/R}^{\infty}\int_{B(z, 2R^\rho)^c}|K_t(x,y)-K_t(x,z)|\diff{x}\frac{\diff{t}}{t}\\
       &\equiv I+II.
\end{aligned}
\end{equation*}
Let us estimate the two expressions above separately. We start with $I$. Let $N>n/2$. Then
\begin{equation*}
\begin{aligned}
\int_{B(z,2R^{\rho})^c} |K_t(x,y)-K_t(x,z)|\,\diff{x}
&\le
\left(
\int_{G} (1+t^{2\rho}d(x,z)^2)^N
\,|K_t(x,y)-K_t(x,z)|^2 \diff{x}
\right)^{1/2} \\
&\qquad\times
\left(
\int_{G}
(1+t^{2\rho}d(x,z)^2)^{-N} \diff{x}
\right)^{1/2}.
\end{aligned}
\end{equation*}
We proceed to estimate the second integral. The mapping $\exp\colon\mathfrak{g}\to G$ is a local diffeomorphism. That is, there exists some $\varepsilon,\varepsilon'\in(0,1)$, such that $\exp^{-1}\colon B(e,\varepsilon)\to B(0,\varepsilon')$ is a diffeomorphism. Let us start by substituting $x=zu$, then we get
    \begin{equation*}
\begin{aligned}
    \int_{G}(1+t^{2\rho}d(x,z)^2)^{-N}\diff{x}&=\int_{G}(1+t^{2\rho}|u|_{geod}^{2})^{-N}\diff{u}\\
    &=\int_{B(e,\varepsilon)}(1+t^{2\rho}|u|_{geod}^{2})^{-N}\diff{u}+\int_{G\setminus B(e,\varepsilon)}(1+t^{2\rho}|u|_{geod}^{2})^{-N}\diff{u}.
\end{aligned}
\end{equation*}
Let us estimate the first integral. Using the fact that the exponential mapping is a local diffeomorphism, we get 
\begin{equation*}
    \begin{aligned}
        \int_{B(e,\varepsilon)}(1+t^{2\rho}|u|_{geod}^2)^{-N}\diff{u}&\leq\int_{B(0,\varepsilon^{\prime})}(1+t^{2\rho}|u|^2_{euclid})^{-N}\diff{u}\\
        &\leq t^{-\rho n}\int_{\mathbb{R}^n}(1+|y|^2)^{-N}\diff{y}=Ct^{-\rho n}.
    \end{aligned}
\end{equation*}
In the last equality, we have used the fact that $N>\frac{n}{2}$ to conclude that the integral $\int_{\mathbb{R}^n}(1+|y|^2)^{-N}\diff{y}<\infty$. The second integral can be estimated as 
\begin{equation*}
    \int_{G\setminus B(e,\varepsilon)}(1+t^{2\rho}|u|^2)^{-N}\diff{u}\leq\int_{G\setminus B(e,\varepsilon)}t^{-2\rho N}|u|^{-2N}\diff{u}\leq t^{-\rho n}\int_{G\setminus B(e,\varepsilon)}\varepsilon^{-2N}\diff{u}
\end{equation*}
$$\leq\varepsilon^{-2N}|G|t^{-\rho n}.$$
We have shown that 
\begin{equation*}
    \left(\int_{G}(1+t^{2\rho}d(x,z)^2)^{-N}\diff{x}\right)^{1/2}\lesssim t^{-\rho n/2}.
\end{equation*}
This implies that
\begin{equation*}
    \int_{B(z,2R^{\rho})^c}|K_t(x,y)-K_t(x,z)|\diff{x}\lesssim
\left(
\int_{G} (1+t^{2\rho}d(x,z)^2)^N
\,|K_t(x,y)-K_t(x,z)|^2 \diff{x}
\right)^{1/2}
\end{equation*}
$$\times  t^{-\rho n/2}.$$
We will now study the expression above. By the Binomial theorem we can write 
\begin{equation*}
    \begin{aligned}
        &\int_{G}(1+t^{2\rho}d(x,z)^2)^N\cdot|K_t(x,y)-K_t(x,z)|^2\diff{x}
        \\&=\sum_{|\alpha|\leq N}C_{\alpha}t^{2\rho\alpha}\int_{G}d(x,z)^{2\alpha}|K_t(x,y)-K_t(x,z)|^2\diff{x}.
    \end{aligned}
\end{equation*}
Let $\alpha$ be fixed so that $|\alpha|\leq N$. Consider the difference operator $\Delta^{\alpha}_{q}$ associated to $q$ that vanishes at $e$ of order $\alpha$. Then, there exist constants $C_1,C_2>0$ such that
\begin{equation*}
    C_1d(x,z)^{\alpha}\leq|q(z^{-1}x)|\leq C_2 d(x,z)^{\alpha}.
\end{equation*}
Therefore, we have to estimate the following $L^2$-norm. Using Parseval's Theorem we see that
    \begin{equation*}
        \begin{aligned}
           & \int_{G}|q(z^{-1}x)(K_t(x,y)-K_t(x,z))|^2\diff{x}=\int_{G}|q(z^{-1}x)(R_t(x,y^{-1}x)-R_t(x,z^{-1}x))|^2\diff{x}\\
            &=\int_{G}|q(x)(R_t(zx,y^{-1}zx)-R_t(zx,x)|^2\diff{x}\\
            &=\sum_{[\zeta]\in\widehat{G}}d_\zeta||\mathcal{F}_G[q(x)(R_t(zx,y^{-1}zx)-R_t(zx,x)](\zeta)||^{2}_{HS}\\
            &=\sum_{[\zeta]\in\widehat{G}}d_\zeta||\Delta_{q}^{\alpha}(\zeta(y^{-1}z)\sigma_t(yx,\zeta))-\Delta_{q}^{\alpha}\sigma_t(zx,\zeta)||^{2}_{HS}.
        \end{aligned}
    \end{equation*}
  Moreover, by the triangle inequality and in view of the identity
\[
\|AB\|_{HS} \le \|A\|_{op}\|B\|_{HS},
\]
choosing \( B = I_{d_\zeta} \), we have
\begin{equation}\label{trace}
    \|B\|_{HS} = \sqrt{\operatorname{Tr}(I_{d_\zeta})}
= \sqrt{d_\zeta}.
\end{equation}

Hence, we obtain the following
    \begin{equation*}
        \begin{aligned}
            &\sum_{[\zeta]\in\widehat{G}}d_\zeta||\Delta_{q}^{\alpha}(\zeta(y^{-1}z)\sigma_t(yx,\zeta))-\Delta_{q}^{\alpha}\sigma_t(zx,\zeta)||^{2}_{HS}\\&\leq\sum_{[\zeta]\in\widehat{G}}d_\zeta^2(||\Delta_{q}^{\alpha}(\zeta(y^{-1}z)\sigma_t(yx,\zeta))||^2_{op}+||\Delta_{q}^{\alpha}\sigma_t(zx,\zeta)||^{2}_{op}).
        \end{aligned}
    \end{equation*}
    Let us note that in the above sum, we sum over $[\zeta]\in\widehat{G}$ for which $\frac{t}{2}\leq\langle\zeta\rangle\leq t$, because otherwise $\sigma_t$ is zero since $\varphi$ is supported in $[\frac{1}{2},1]$. This observation will be used later in the proof. We will now prove that the above norms can be estimated by some constant times $t^r$ for every real number $r$. The parameter $r$ will be determined later. By Lemma \ref{smoothing} we already have that for every $r\in\mathbb{R}$ there exists some $C_r>0$ such that
    \begin{equation*}
        \|\Delta_{q}^{\alpha}\sigma_t(zx,\zeta)\|_{op}\leq C_rt^{r}.
    \end{equation*}
    We will now prove a similar estimate for the other operator norm, however the resulting bound is slightly different. Using Leibniz rule and Lemma \ref{smoothing} we see that
    \begin{equation*}
    \begin{aligned}
         \|\Delta_{q}^{\alpha}(\zeta(y^{-1}z)\sigma_t(yz,\zeta))||_{op}&=\sum_{\alpha_1+\alpha_2=\alpha}C_{\alpha_1,\alpha_2}||\Delta_{q}^{\alpha_1}\zeta(y^{-1}z)||_{op}\|\Delta_{q}^{\alpha_2}\sigma_t(yx,\zeta)||_{op}\\
         &\leq\sum_{\alpha_1+\alpha_2=\alpha}C_{\alpha_1,\alpha_2}||\Delta_{q}^{\alpha_1}\zeta(y^{-1}z)||_{op}\cdot(C_r t^{r}).
    \end{aligned}
    \end{equation*}
To understand $\|\Delta_{q}^{\alpha_1}\zeta(y^{-1}z)\|_{op}$, we will introduce the symbol
    \begin{equation*}
        \tau(x,\pi)=
\begin{cases}
\zeta(x), & \text{if } \pi=\zeta,\\
0,        & \text{otherwise}.
\end{cases}
    \end{equation*}
    Computing its right convolution kernel we see that 
    \begin{equation*}
        R(x,y)=\sum_{[\pi]\in\widehat{G}}d_\pi\operatorname{Tr}(\pi(y)\tau(x,\pi))=d_\zeta\operatorname{Tr}(\zeta(y)\zeta(x)).
    \end{equation*}
    Therefore we have that 
    \begin{equation*}
    \begin{aligned}
        ||\Delta^{\alpha_1}_{q}\tau(x,\pi)||_{op}&=d_\zeta\int_{G}|q(y)|\cdot|\operatorname{Tr}(\zeta(x)\zeta(y))|\cdot||\zeta^{*}(y)||_{op}\diff{y}\\
        &\leq d_\zeta^2\int_{G}|q(y)|\diff{y}\lesssim\langle\zeta\rangle^{n}.
    \end{aligned}
    \end{equation*}
    We have used that 
    \begin{equation*}
        |\operatorname{Tr}(\zeta(x)\zeta(y))|\leq\|\zeta(x)\|_{HS}\|\zeta(y)\|_{HS}=d_\zeta, 
    \end{equation*}
    and that $\|\zeta^*(y)\|_{op}=1$, for every $[\zeta]\in\widehat{G}$. The final inequality follows from \cite[Proposition 10.3.19]{17}. Finally, we can find a constant $C_r>0$ such that 
    \begin{equation*}
        \|\Delta_{q}^{\alpha}(\zeta(y^{-1}z)\sigma_t(yz,\zeta))||_{op}\leq C_rt^r\langle\zeta\rangle^{n}.
    \end{equation*}
    Continuing the estimation we observe  
    \begin{equation*}
    \begin{aligned}
        &\sum_{\frac{t}{2}\leq\langle\zeta\rangle\leq t}d_\zeta^2(||\Delta_{q}^{\alpha}(\zeta(y^{-1}z)\sigma_t(yx,\zeta))||^2_{op}+||\Delta_{q}^{\alpha}\sigma_t(zx,\zeta)||^{2}_{op})\\
        &\leq\sum_{\frac{t}{2}\leq\langle\zeta\rangle\leq t}d_\zeta^2(C_rt^r\langle\zeta\rangle^{2n}+C_r't^r)\leq\sum_{\frac{t}{2}\leq\langle\zeta\rangle\leq t}d_\zeta^2(C_r^{\prime\prime}t^{2n}t^r)\leq \widetilde{C_r}t^rt^{3n}.
    \end{aligned}
    \end{equation*}
    In the final inequalities, we used the assumption $\frac{t}{2}\leq\langle \zeta \rangle \le t$ to bound $\langle \zeta \rangle^{2n}$ by $t^{2n}$. Since $t \ge 1$, this implies $t^{2n} \ge 1$, which allows us to combine the terms in the sum. Finally, we applied Weyl’s eigenvalue theorem, see Theorem \ref{Weyl}. The original sum can now be estimated as
    \begin{equation*}
        \left(\sum_{|\alpha|\leq N}C_\alpha \cdot \widetilde{C_r}\cdot t^{2\rho\alpha}\cdot t^{3n}\cdot t^{r}\right)^{1/2}\cdot t^{-\rho n/2}.
    \end{equation*}
    For any $\alpha$, let $r = -3n - 2\rho \alpha + \rho n - 2$. We have that,
    \begin{equation*}
        \begin{aligned}
            & \left(\sum_{|\alpha|\leq N}C_\alpha \cdot \widetilde{C_r}\cdot t^{2\rho\alpha}\cdot t^{3n}\cdot t^{r}\right)^{1/2}\cdot t^{-\rho n/2}\\
            &= \left(\sum_{|\alpha|\leq N}C_\alpha \cdot \widetilde{C_{r,\alpha}}\cdot t^{2\rho\alpha}\cdot t^{3n}\cdot t^{-3n-2\rho\alpha+\rho n-2}\right)^{1/2}\cdot t^{-\rho n/2}\\
             &= \left(\sum_{|\alpha|\leq N}C_\alpha \cdot \widetilde{C_{r,\alpha}}\cdot t^{\rho n-2}\right)^{1/2}\cdot t^{-\rho n/2}\\
             &= \left(\sum_{|\alpha|\leq N}C_\alpha \cdot \widetilde{C_{r,\alpha}}\right)^{1/2}\cdot t^{-1}
        \end{aligned}
    \end{equation*}
    Since the above sum is finite, there exists a constant $C_N>0$ such that it is bounded by $C_N t^{-1}$. This concludes the estimation of $I$. Next, the second integral $II$ can be estimated as follows. 
\begin{equation*}
\begin{aligned}
    &\int_{B(z, 2R^\rho)^c}|K_t(x,y)-K_t(x,z)|\diff{x}\\
       &\leq\int_{B(y,R^\rho)^c}|K_t(x,y)|\diff{x}+\int_{B(z, 2R^\rho)^c}|K_t(x,z)|\diff{x}.
\end{aligned}
\end{equation*}
Consider the difference operator $\Delta_q$ associated to $q$ that vanishes at $e$ of order $N$. We have
  \begin{equation*}
      \int_{B(y,R^\rho)^c}|K_t(x,y)|\diff{x}=\int_{B(y,R^\rho)^c}\frac{|q(y^{-1}x)K_t(x,y)|}{|q(y^{-1}x)|}\diff{x}.
  \end{equation*}
  Using the Cauchy-Swchartz inequality and the fact that $|q(y^{-1}x)|\asymp|y^{-1}x|^N$ we obtain
  \begin{equation*}
      \int_{B(y,R^\rho)^c}|K_t(x,y)|\diff{x}\lesssim\left(\int_{B(y,R^\rho)^c}|y^{-1}x|^{-2N}\diff{x}\right)^{1/2}\left(\int_{G}|q(y^{-1}x)K_t(x,y)|^2\diff{x}\right)^{1/2}.
  \end{equation*}
  We first examine the integral $\left(\int_{B(y,R^\rho)^c}|y^{-1}x|^{-2N}\diff{x}\right)^{1/2}$. We start by making the change of variables $u=y^{-1}x$. Note that if $x\in B(y,R^\rho)^c$ then $u\in B(e,R^\rho)^c$, because $|u|=|y^{-1}x|\geq R^\rho$. This gives
  \begin{equation*}
      \int_{B(y,R^\rho)^c}|y^{-1}x|^{-2N}\diff{x}=\int_{B(e,R^\rho)^c}|u|^{-2N}\diff{u}.
  \end{equation*}
  We know that $\exp\colon\mathfrak{g}\to G$ is a local diffeomorphism. Meaning there exists some $\varepsilon,\varepsilon'\in(0,1)$, such that $\exp^{-1}\colon B(e,\varepsilon)\to B(0,\varepsilon')$ is a diffeomorphism. 
  If $R^\rho\leq \varepsilon$, then we can estimate
  \begin{equation*}
   \begin{aligned}
       \int_{|u|>R^\rho}|u|^{-2N}_{geod}\diff{u}\leq\int_{R^\rho<|u|\leq\varepsilon}|u|^{-2N}_{geod}\diff{u}+\int_{\varepsilon<|u|\leq\operatorname{diam}(G)}|u|^{-2N}_{geod}\diff{u}.
   \end{aligned}
  \end{equation*}
 Note that the second integral can be estimated as
  \begin{equation*}
      \begin{aligned}
          \int_{\varepsilon<|u|\leq\operatorname{diam}(G)}|u|^{-2N}_{geod}\diff{u}\leq \varepsilon^{-2N}|G|\leq\varepsilon^{-2N}|G|R^{\rho(n-2N)}.
      \end{aligned}
  \end{equation*}
In the last inequality, we used the fact that $N>\frac{n}{2}$, and $R<1$.  To estimate the first integral, we use that the exponential map is a local diffeomorphism and then apply polar coordinates:
 \begin{equation*}
   \begin{aligned}
       \int_{R^{\rho}<|u|\leq\varepsilon}|u|^{-2N}_{geod}\diff{u}&\lesssim\int_{(R^\rho)'<|u|\leq\varepsilon'}|u|^{-2N}_{eucld}\diff{u}\\
       &\asymp
\int_{(R^\rho)'}^{\varepsilon'}\int_{\Omega}r^{-2N}r^{n-1}\diff{\Omega}\diff{r}\\
      &\asymp\int_{(R^\rho)'}^{\varepsilon'}r^{n-2N-1}\diff{r}\\
      &\asymp\frac{r^{n-2N}}{n-2N}\Big|_{(R^\rho)'}^{\varepsilon'}.
   \end{aligned}
  \end{equation*}
   Since this exponent is negative and $(R^{\rho})'<\varepsilon'$, the dominant contribution in the above expression comes from the lower limit. Moreover, since $R^\rho\sim(R^\rho)^{\prime}$ the integral can be estimated by
   \begin{equation*}
       \int_{R^\rho<|u|\leq\varepsilon}|u|^{-2N}_{geod}\diff{u}\lesssim R^{\rho(n-2N)}.
   \end{equation*}
   If $R^\rho>\varepsilon$ then we have
   \begin{equation*}
       \int_{|u|>R^\rho}|u|^{-2N}_{geod}\diff{u}\leq\int_{\operatorname{diam}(G)>|u|>\varepsilon}|u|^{-2N}_{geod}\diff{u}\leq\varepsilon^{-2N}|G|\leq\varepsilon^{-2N}|G|R^{\rho(n-2N)}.
   \end{equation*}
   Again, we are using that $N>\frac{n}{2}$, and $R<1$. In all the cases above, we have shown that
   \begin{equation*}
       \left(\int_{|u|>R^\rho}|u|_{geod}^{-2N}\diff{u}\right)^{1/2}\lesssim R^{\rho(\frac{n}{2}-N)}.
   \end{equation*}
Next we estimate $\left(\int_{G}|q(y^{-1}x)K_t(x,y)|^2\diff{x}\right)^{1/2}.$ Using Parseval's Theorem and the fact that $\varphi$ is supported in $[\frac{1}{2},1]$ we get
\begin{equation*}
     \begin{aligned}
       \left(\int_{G}|q(y^{-1}x)K_t(x,y)|^2\diff{x}\right)^{1/2}
       &=\left(\sum_{[\zeta]\in\widehat{G}}d_\zeta\|\mathcal{F}_G[{q(y^{-1}x)\mathcal{F}_{G}^{-1}\sigma_t(x,\cdot)(y^{-1}x)}]\|_{HS}^2\right)^{1/2}\\
       &=\left(\sum_{\frac{t}{2}\leq\langle\zeta\rangle\leq t}d_\zeta\|\triangle_q\sigma_t(yx,\zeta)\|_{HS}^2\right)^{1/2}\\
       &=\left(\sum_{\frac{t}{2}\leq\langle\zeta\rangle\leq t}d_\zeta\|\triangle_q\sigma_t(yx,\zeta)\cdot I\|_{HS}^2\right)^{1/2}\\
       &\leq\left(\sum_{\frac{t}{2}\leq\langle\zeta\rangle\leq t}d_\zeta^2\|\triangle_q\sigma_t(yx,\zeta)\|_{op}^2\right)^{1/2}\\
       &\leq\left(\sum_{\frac{t}{2}\leq\langle\zeta\rangle\leq t}d_\zeta^2\langle\zeta\rangle^{-n(1-\rho)-2\rho N}\right)^{1/2}\\
       &\lesssim t^{-\frac{n}{2}(1-\rho)-\rho N}\left(\sum_{\frac{t}{2}\leq\langle\zeta\rangle\leq t}d_\zeta^2\right)^{1/2}\\
       &\lesssim t^{-\frac{n}{2}(1-\rho)-\rho N} t^{n/2}=t^{\rho(\frac{n}{2}-N)}.
   \end{aligned}
   \end{equation*}
   This proves that,
   \begin{equation*}
       \int_{B(y,R^\rho)^c}|K_t(x,y)|\diff{x}\lesssim (tR)^{\rho(\frac{n}{2}-N)}.
   \end{equation*}
   In the same way, it can be proved that 
   \begin{equation*}
       \int_{B(z,2R^\rho)^c}|K_t(x,z)|\diff{x}\lesssim (tR)^{\rho(\frac{n}{2}-N)}.
   \end{equation*}
    Finally, combining (\ref{kernel}) with the estimates for $I$ and $II$, we can apply Fubini’s theorem to obtain 
   \begin{equation*}
   \begin{aligned}
        \int_{B(z, 2R^\rho)^c}|K(x,y)-K(x,z)|\diff{x}&\leq\int_{1}^{\infty}\int_{B(z, 2R^\rho)^c}|K_t(x,y)-K_t(x,z)|\diff{x}\frac{\diff{t}}{t}\\
       &\lesssim\left(\int_{1}^{1/R}\frac{1}{t}+\int_{1/R}^{\infty}(tR)^{\rho(\frac{n}{2}-N)}\right)\frac{\diff{t}}{t}\\
       &\leq C. 
   \end{aligned}
   \end{equation*}
   The proof is complete. 
\end{proof}
\subsection{$L^1$-theory of pseudo-differential operators}
We are now ready to prove Theorem \ref{weak}. For the proof, we will use the Calderón–Zygmund decomposition. In Theorem \ref{CZ}, we have the condition $\alpha\gamma>\frac{1}{|G|}\int_{G}|f(x)|\diff{x}$. Note that if this condition does not hold, a weak (1,1)-bound follows trivially, because
\begin{equation*}
    |\{x\in G:|Tf(x)|>\alpha\}|\leq|G|\leq\frac{1}{\alpha\gamma}\int_{G}|f(x)|\diff{x}.
\end{equation*}
\begingroup
\renewcommand{\thetheorem}{1.2}
\begin{theorem}
    Let $G$ be a compact Lie group of topological dimension $n\geq 1$. Let $T\in\Psi^{m}_{\rho,\delta}(G)$, with
    \[
m \le -\frac{n}{2}(1-\rho),
\qquad 0 \le \delta \le \rho \le 1,\ \rho\neq0,\;\delta \neq 1.
\]  
    Then, $T$ is a continuous mapping from $L^1(G)$ to $L^{1,\infty}(G)$.
\end{theorem}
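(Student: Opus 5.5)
The plan follows the Álvarez–Milman/Fefferman scheme from the Methodology, with Theorem \ref{kernel_estimates} replacing the Calderón–Zygmund kernel condition. As in the sentence just before the statement, we may assume $m=-\frac n2(1-\rho)$, and we may assume $\alpha\gamma>\frac1{|G|}\|f\|_{L^1}$, since otherwise the bound is trivial. Fix $f\in L^1(G)$ and $\alpha>0$, and apply Theorem \ref{CZ} at height $\alpha\gamma$ (with $\gamma$ a fixed constant) to get $f=g+\sum_j b_j$ with $\operatorname{supp} b_j\subseteq I_j=B(x_j,r_j)$. Split the index set into $\mathcal J_1=\{j: r_j\ge1\}$ and $\mathcal J_2=\{j:r_j<1\}$. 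Set $\beta=\frac n2(1-\rho)$ and $J^{s}=(1+\mathcal L_G)^{s/2}$, so that $T=TJ^{\beta}\,J^{-\beta}$.

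\textbf{Good part.} The operator $TJ^{\beta}$ lies in $\Psi^0_{\rho,\delta}(G)$ by the global calculus. By Proposition \ref{L2} and since $J^{-\beta}$ is bounded on $L^2$, $T$ is bounded on $L^2(G)$. Chebyshev's inequality and property (1) of Theorem \ref{CZ} then give
\[
|\{|Tg|>\alpha/3\}|\lesssim\alpha^{-2}\|g\|_{L^2}^2\le\alpha^{-2}\|g\|_{L^\infty}\|g\|_{L^1}\lesssim\alpha^{-1}\|f\|_{L^1}.
\]

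\textbf{Bad part.} Set $I_j^*=B(x_j,2r_j)$ for $j\in\mathcal J_1$ and $I_j^*=B(x_j,2r_j)$ for $j\in\mathcal J_2$ as well. Only the radius $2r_j$ is used for the exceptional set, because $\sum_j r_j^{\rho n}$ is \emph{not} controlled by $\sum_j|I_j|$. This gives $|\bigcup_j I_j^*|\lesssim\sum_j|I_j|\lesssim\|f\|_{L^1}/\alpha$.

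For $j\in\mathcal J_1$, the cancellation $\int b_j=0$ and the first bullet of Theorem \ref{kernel_estimates} yield
\[
\int_{(I_j^*)^c}|Tb_j|\le\int|b_j(y)|\int_{B(x_j,2r_j)^c}|K(x,y)-K(x,x_j)|\,dx\,dy\lesssim\|b_j\|_{L^1}.
\]

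For $j\in\mathcal J_2$, write $b_j=F_j'+F_j''$, where $F_j'=b_j*\phi_{r_j^\rho}$ is $b_j$ averaged at the larger scale $r_j^\rho$: here $\phi_s$ is a normalized bump supported in $B(e,s)$, and the convolution is performed via $\exp$ on small balls. We then set $F'=\sum_{j\in\mathcal J_2}F_j'$. By construction, $F_j''$ has mean zero and is supported in $B(x_j,2r_j^\rho)$. The crucial point is that $F_j''$ is a difference of two measures whose spatial spread is $r_j^\rho$ but whose frequency content effectively lives at $\langle\zeta\rangle\gtrsim r_j^{-\rho}$.

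\textbf{Estimating $F''$.} The kernel of $T$ at frequencies $t\le 1/r_j$ is handled exactly as in the term $I$ of the proof of Theorem \ref{kernel:lemma}, using the difference $K_t(x,y)-K_t(x,x_j)$. The frequencies $t\ge 1/r_j$ are handled as in term $II$. This time the integration is over the complement of $B(x_j,2r_j)$, but the estimate is paid for by the smoothing built into $F_j''$. The goal is $\int_{(I_j^*)^c}|TF_j''|\lesssim\|b_j\|_{L^1}$.

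\textbf{Estimating $F'$.} The remaining and main step is Fefferman's estimate
\[
\|J^{-\beta}F'\|_{L^2}^2\lesssim\alpha\|f\|_{L^1}.
\]
Once it holds, we get $\|TF'\|_{L^2}^2\lesssim\alpha\|f\|_{L^1}$, and Chebyshev's inequality closes the argument.

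To prove it, I would expand the square as $\sum_{j,k}\langle J^{-2\beta}F_j',F_k'\rangle$ and use the kernel of $J^{-2\beta}$. Since $2\beta=n(1-\rho)<n$, Proposition \ref{kernel} bounds this kernel by $|y|^{-n\rho}$. Each $F_j'$ is controlled pointwise:
\[
|F_j'|\lesssim r_j^{-\rho n}\|b_j\|_{L^1}\lesssim \alpha\, r_j^{n(1-\rho)},
\]
and $F_j'$ is supported in a ball of radius about $r_j^\rho$.
\begin{itemize}
\item For the diagonal terms, $\langle J^{-2\beta}F_j',F_j'\rangle\lesssim\|b_j\|_{L^1}\,\|J^{-2\beta}F_j'\|_{L^\infty}$, and integrating $|y|^{-n\rho}$ against $F_j'$ gives $\|J^{-2\beta}F_j'\|_{L^\infty}\lesssim\alpha$.
\item For the off-diagonal terms, I would use $\sum_k J^{-2\beta}|F_k'|\lesssim\alpha$ pointwise. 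This should follow from the maximal-function control $m_f\lesssim\alpha\gamma$ off $\Omega$ together with the bounded-overlap property (6) of Theorem \ref{CZ}.
\end{itemize}
Combining the two cases gives $\sum_{j}\|b_j\|_{L^1}\cdot\alpha\lesssim\alpha\|f\|_{L^1}$. This off-diagonal bookkeeping, and matching the exponent $n\rho$ of the $J^{-2\beta}$-kernel with the size $r_j^{n(1-\rho)}$, is where I expect the real difficulty. If it fails, I would instead define $F'$ using a Littlewood–Paley cut at frequency $r_j^{-\rho}$ via the same $\varphi(\langle\zeta\rangle/t)$ decomposition, with Theorem \ref{Weyl} controlling the resulting sums.
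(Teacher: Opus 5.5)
\textbf{There is a genuine gap in the $F''$ step. It comes from averaging $b_j$ at the wrong scale.} Your good part and your treatment of the balls with $r_j\ge 1$ match the paper.

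You smooth $b_j$ at the \emph{larger} scale $r_j^\rho$. Fefferman's argument, which the paper follows, smooths at the \emph{smaller} scale $r_j^{1/\rho}$, with $\phi_j=|B(e,cr_j^{1/\rho})|^{-1}1_{B(e,cr_j^{1/\rho})}$. With your choice, $T(b_j*\phi_{r_j^\rho})$ only involves frequencies $\lesssim r_j^{-\rho}$. There the cancellation of $b_j$ at scale $r_j$ already suffices. So your target $\int_{B(x_j,2r_j)^c}|TF_j''|\lesssim\|b_j\|_{L^1}$ amounts to $\int_{B(x_j,2r_j)^c}|Tb_j|\lesssim\|b_j\|_{L^1}$. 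That is exactly what fails for $\rho<1$: Theorem \ref{kernel_estimates} only controls the complement of $B(x_j,2r_j^\rho)$.

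Here is why, in terms of $K=\int_1^\infty K_t\,dt/t$:
\begin{itemize}
\item At the critical order each piece has $\int_G|K_t(x,y)|\,dx=O(1)$; this is the $t^{-\rho n/2}\cdot t^{\rho n/2}$ balance in terms $I$ and $II$.
\item Each piece lives at distance $\lesssim t^{-\rho}$ from $y$. For $t\in[r_j^{-1},r_j^{-1/\rho}]$ this distance sweeps $[r_j,r_j^\rho]$, i.e.\ mostly outside $B(x_j,2r_j)$.
\item $b_j$ is only controlled in $L^1$; think of $f$ a bump much narrower than $r_j^{1/\rho}$. So nothing suppresses these frequencies, and you lose a factor $\sim\log(1/r_j)$, unbounded as $r_j\to0$.
\item This loss already occurs for the Euclidean model $e^{i|\xi|^{1-\rho}}|\xi|^{-n(1-\rho)/2}$, whose kernel has size $|x|^{-n}$ near the origin.
\end{itemize}
Subtracting $b_j*\phi_{r_j^\rho}$ removes only frequencies below $r_j^{-\rho}$. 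So ``paid for by the smoothing built into $F_j''$'' has no content: $F_j''$ carries all the high frequencies of $b_j$. Kernel differences do not rescue it either. There $d(y,z)$ ranges up to $r_j^\rho$, and Theorem \ref{kernel_estimates} with $R=r_j^\rho$ controls only the complement of $B(z,2r_j^{\rho^2})$, an even larger ball.

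\textbf{The fix is to take $\phi_j$ at scale $c\,r_j^{1/\rho}$.} Then
$T(b_j-b_j*\phi_j)(x)=\iint (K(x,y)-K(x,z))\phi_j(y^{-1}z)\,dz\,b_j(y)\,dy$ with $d(y,z)<R:=c\,r_j^{1/\rho}$. Theorem \ref{kernel_estimates} now controls exactly the complement of $B(z,2R^\rho)=B(z,2c^\rho r_j)$. This contains every $x\notin\Omega^*=\bigcup_j B(x_j,kr_j)$ for $k>4$, and $|\Omega^*|\lesssim\sum_j|I_j|$ as you wanted. That is how the paper calibrates the scale.

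The price is paid in $F'$, where $|b_j*\phi_j|$ may now be as large as $\alpha r_j^{n-n/\rho}$. Write $\kappa$ for the kernel of $J^{-2\beta}$. Your diagonal estimate becomes exactly critical:
$\|J^{-2\beta}F_j'\|_{L^\infty}\le\|b_j\|_{L^1}\|\phi_j*\kappa\|_{L^\infty}\lesssim\alpha r_j^{n}(r_j^{1/\rho})^{-n\rho}=\alpha$.
This is the paper's bound $\|\phi_j*k_\beta\|_{L^2}^2\lesssim|I_j|^{-1}$ in another guise. The surplus $\alpha r_j^{n(1-\rho)(1+\rho)}$ your scale produced there was the symptom of the imbalance.

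With the scale corrected, your treatment of $F'$ goes through. You expand $\langle J^{-2\beta}F',F'\rangle$ and obtain $\sum_k J^{-2\beta}|F_k'|\lesssim\alpha$ from three facts: positivity of the Bessel kernel, $\phi_k*\kappa\lesssim\min(r_k^{-n},|\cdot|^{-n\rho})$, and bounded overlap. This is a clean alternative to the paper's near/far split $F_1+F_2$. Likewise, your Littlewood--Paley fallback would have to cut at frequency $r_j^{-1/\rho}$, not $r_j^{-\rho}$.
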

\endgroup
\addtocounter{theorem}{-1}
\begin{proof} Since the case $\rho=1$ falls into the Calder\'on-Zygmund theory in \cite{7}, in our further arguments we  consider that $0<\rho<1.$ Furthermore, since $m_1 \leq m_2$ implies $S^{m_1}_{\rho,\delta}(G) \subseteq S^{m_2}_{\rho,\delta}(G)$, it suffices to prove the result for the sharp order $-\frac{n(1-\rho)}{2}$.
    Let $f\in L^1(G)$ and $\gamma,\alpha>0$ be arbitrary, such that 
    \begin{equation*}
        \alpha\gamma>\frac{1}{|G|}\int_{G}|f(x)|\diff{x}.
    \end{equation*}
     We have that 
     \begin{eqnarray*}
     &|\{x\in G:|Tf(x)|>\alpha\}|\leq|\{x\in G:|Tg(x)|>\alpha/2\}|\\
     &+|\{x\in G:|Tb(x)|>\alpha/2\}|.
\end{eqnarray*}
    Let us estimate both terms separately. First notice that $g\in L^2(G)$, this we can see by using property 1 from Theorem \ref{CZ} because, 
    \begin{equation*}
        \int_{G}|g|^2\diff{x}=\int_{G}|g(x)|\cdot|g(x)|\diff{x}\lesssim\|g\|_{L^\infty}\cdot\|g\|_{L^1}\lesssim\alpha\gamma\|f\|_{L^1}.
    \end{equation*}
    Thus we have
    \begin{equation*}
    \begin{aligned}
        \alpha^2|\{x\in G: |Tg(x)|>\alpha/2\}|&=\int_{\{x\in G:|Tg(x)|>\alpha/2\}}\alpha^2\diff{x}\\
        &\lesssim\int_{G}|Tg(x)|^2\diff{x}\\
        &=\|Tg\|^{2}_{L^2}\lesssim\|g\|^2_{L^2}\lesssim\alpha\gamma\|f\|_{L^1}.
    \end{aligned}
    \end{equation*}
    Above we have used the fact that $T$ is bounded on $L^2$, because clearly $S^{-\frac{n(1-\rho)}{2}}_{\rho,\delta}\subseteq S^{0}_{\rho,\delta}$, allowing us to apply Proposition \ref{L2}. It remains now to estimate $|\{x\in G:|Tb(x)|>\alpha/2\}|$, for this the kernel estimation is essential. We start by writing 
    \begin{equation*}
        \sum_{j=1}^{\infty}b_j=\sum_{r_j<1}b_j+\sum_{r_j\geq 1}b_j=F+R,
    \end{equation*}
    where $F=\sum_{r_j<1}b_j$ and $R=\sum_{r_j\geq 1}b_j$. Furthermore we have that
     \begin{eqnarray*}
     &|\{x\in G:|Tb(x)|>\alpha/2\}|\leq|\{x\in G:|TF(x)|>\alpha/4\}|\\
     &+|\{x\in G:|TR(x)|>\alpha/4\}|.
\end{eqnarray*}
     Now we notice that there exists $k>0$ such that if $x\in G\setminus \Omega^*$ then $x\in B(x_j,2r_j)^c$ for every $j$, where $\Omega^*=\bigcup_{j=1}^{\infty}B(x_j,kr_j)$. Indeed we can find such $k$ because $x\in G\setminus\Omega^*$ implies that $x\in B(x_j,kr_j)^c$ for every $j$. Meaning $d(x,x_j)\geq kr_j>2r_j$ for every $j$, so it suffices to choose some $k>2$. However we will choose $k>4$, the reason for this will be clear later on. Let us now estimate the part corresponding to $R$. We notice that
    \begin{equation*}
        |\{x\in G:|TR(x)|>\alpha/4\}|\leq|\Omega^*|+|\{x\in G\setminus\Omega^*:|TR(x)|>\alpha/4\}|.
    \end{equation*}
    By the properties of the family $\{I_j\}_j$ we can estimate
    \begin{equation*}
        |\Omega^*|\lesssim\sum_{j=1}^{\infty}|I_j|\lesssim\frac{\|f\|_{L^1}}{\alpha\gamma}.
    \end{equation*}
    To estimate $|\{x\in G\setminus\Omega^*: |TR(x)|>\alpha/4\}|$ we work as following. Using that $\int b_j\diff{y}=0$ we can write
    \begin{equation*}
    \begin{aligned}
        Tb_j(x)&=\int_{G}K(x,y)b_j(y)\diff{y}\\
        &=\int_{G}(K(x,y)-K(x,x_j))b_j(y)\diff{y}\\
        &=\int_{I_j}(K(x,y)-K(x,x_j))b_j(y)\diff{y}.
    \end{aligned}
     \end{equation*}
    In the last equality we used $\operatorname{supp}(b_j)\subseteq I_j$. Therefore we get that
    \begin{equation*}
        \int_{G\setminus\Omega^*}|Tb_j(x)|\diff{x}\leq\int_{G\setminus\Omega^*}\int_{I_j}|K(x,y)-K(x,x_j)|\cdot|b_j(y)|\diff{y}\diff{x}.
    \end{equation*}
    Since $x\in G\setminus\Omega^*$ implies that $x\in B(x_j,2r_j)^c$ for every $j$, we finally obtain 
    \begin{equation*}
    \begin{aligned}
        \int_{G\setminus\Omega^*}|TR(x)|\diff{x}&\leq\sum_{r_j\geq 1}\int_{G\setminus\Omega^*}|Tb_j(x)|\diff{x}\\
        &\leq\sum_{r_j\geq 1}\int_{B(x_j,2r_j)^c}\int_{I_j}|K(x,y)-K(x,x_j)|\cdot|b_j(y)|\diff{y}\diff{x}\\
        &\lesssim\sum_{r_j\geq 1}\int_{I_j}|b_j(y)|\diff{y}\lesssim\sum_{j=1}^{\infty}|I_j|\lesssim(\alpha\gamma)^{-1}\|f\|_{L^1},
    \end{aligned}
    \end{equation*}
    where in the last line we used the kernel estimate in Theorem \ref{kernel:lemma} and the properties of the family $\{I_j\}_j,$ see Theorem \ref{CZ} (4). It remains to estimate $|\{x\in G\setminus\Omega^*:|TF|>\alpha/4\}|$. We start by considering a family $\{\phi_j\}_j$ of smooth functions with compact support, for which $\phi_j\geq 0$ and $\int_{G}\phi_j=1$ for all $j$. In order to define this family, we make the following observation. There are some $r,r'\in(0,1)$ such that $\exp^{-1}\colon B(e,r)\to B(0,r')$ is a diffeomorphism. Choose $0<c<1$ such that $c<r$. We can define for every $j$:
    \begin{equation*}
        \phi_j(y)=\frac{1}{|B(e,c\cdot r_j^{1/\rho})|}1_{B(e,c\cdot r_j^{1/\rho})}(y).
    \end{equation*}
    Now we write
    \begin{equation*}
         F=\sum_{r_j<1}b_j=\sum_{r_j<1}b_j*\phi_j+\sum_{r_j<1}(b_j-b_j*\phi_j)=F'+F'',
    \end{equation*}
    where $F'=\sum_{r_j<1}b_j*\phi_j$ and $F''=\sum_{r_j<1}(b_j-b_j*\phi_j)$. This now implies
    \begin{eqnarray*}
     &|\{x\in G\setminus\Omega^*:|TF(x)|>\alpha/4\}|\leq|\{x\in G\setminus\Omega^*:|TF'(x)|>\alpha/8\}|\\
     &+|\{x\in G\setminus\Omega^*:|TF''(x)|>\alpha/8\}|.
\end{eqnarray*}   
    We start by estimating the expression corresponding to $F''$. Notice that
    \begin{equation*}
    \begin{aligned}
        T(b_j-b_j*\phi_j)(x)&=\int_{G}K(x,y)(b_j-b_j*\phi_j)(y)\diff{y}\\
        &=\int_{G}K(x,y)b_j(y)\diff{y}-\int_{G}K(x,z)(b_j*\phi_j)(z)\diff{z}\\
        &=\int_{G}K(x,y)b_j(y)\diff{y}-\int_{G}K(x,z)\int_{G}\phi_j(y^{-1}z)b_j(y)\diff{y}\diff{z}.
    \end{aligned}
    \end{equation*}
    We can change the order of integration in the second term by using Fubini theorem. Combining this with the fact that $\int_{G}\phi_j=1$ we get that the right hand side is
    \begin{equation*}
        \int_{G}\int_{G}(K(x,y)-K(x,z))\phi_j(y^{-1}z)\diff{z}b_j(y)\diff{y}.
    \end{equation*}
    Using the above we get the following estimation
\begin{equation*}
\begin{aligned}
    \alpha \left|\{x \in G \setminus\Omega^* : |TF''(x)| > \alpha/8\}\right| 
    &\lesssim \int_{G \setminus\Omega^*} |TF''(x)| \, \diff{x} \\
    &\lesssim \sum_{r_j < 1} \int_{G \setminus\Omega^*} \int_{G} \int_{G} \Big| K(x,y) - K(x,z) \Big| \\
    &\quad \cdot |\phi_j(y^{-1}z) b_j(y) |\, \diff{z} \, \diff{y} \, \diff{x}.
\end{aligned}
\end{equation*}
Observe that $\operatorname{supp}(b_j)\subseteq I_j$, $\operatorname{supp}(\phi_j)\subseteq B(e,c\cdot r_j^{1/\rho})$ and $x\in G\setminus kI_j$ implies that $d(x,x_j)>4r_j$. Notice now that if $x\in G\setminus\Omega^*$, $y\in I_j$ and $y^{-1}z\in\operatorname{supp}(\phi_j)\subseteq B(e,c\cdot r_j^{1/\rho})$ then
\begin{equation*}
\begin{aligned}
    d(x,z)&\geq d(x,x_j)-d(x_j,y)-d(y,z)\\
    &\geq 4r_j-r_j-r_j^{1/\rho}\geq 4r_j-r_j-r_j=2r_j,
\end{aligned}
\end{equation*}
where in the last inequality we used that $r_j<1$ and $0<\rho<1$. Finally we obtain
\begin{equation*}
    \mathscr{I}=\sum_{j}\int_{I_j}\int_{\operatorname{supp}(\phi_j)}\int_{B(z,2r_j)^c}|K(x,y)-K(x,z)|\cdot|\phi_j(y^{-1}z)|\cdot|b_j(y)|\diff{x}\diff{z}\diff{y}.
\end{equation*}
Using the kernel condition in Theorem \ref{kernel:lemma}, the fact that $\int_{G}\phi_j(y^{-1}z)\diff{z}=1,$ for any $y$ and $j$, and the property of the $b_j$'s in Theorem \ref{CZ} (5), we can estimate $\mathscr{I}$ as follows
\begin{equation*}
     \mathscr{I}= \sum_{j}\int_{I_j}|b_j(y)|\diff{y}\lesssim\|f\|_{L^1}.
\end{equation*}
It remains to estimate $|\{x\in G\setminus\Omega^*:|TF'(x)|>\alpha/8\}|$. By Chebyshev's inequality we have that 
\begin{equation*}
    \alpha^2|\{x\in G\setminus\Omega^*:|TF'(x)|>\alpha/8\}|\lesssim\|TF'\|^{2}_{L^2}.
\end{equation*}
Let $J^\beta=(1+\mathcal{L}_{G})^{\beta/2}=(1-X_1^2-\dots-X_n^2)^{\beta/2}$ denote the Bessel potential of order $\beta$. We choose $\beta=-n(1-\rho)/2<0$, in order that its kernel could be integrable. Using the $L^2$-boundedness of $T$ and $J^{-\beta}$ we see that
\begin{equation*}
    \|TF'\|^{2}_{L^2}=\|TJ^{-\beta}J^\beta F'\|^{2}_{L^2}\lesssim\|J^\beta F'\|^{2}_{L^2}.
\end{equation*}
From the above we conclude that it is enough to prove that $\|J^\beta F'\|^{2}_{L^2}\lesssim\alpha\|f\|_{L^1}$. To do this we split
\begin{equation*}
    J^\beta F'(x)=\sum_{x\sim I_j}J^\beta(b_j*\phi_j)(x)+\sum_{x\not\sim I_j}J^\beta(b_j*\phi_j)(x)=F_1(x)+F_2(x),
\end{equation*}
with $F_1(x)=\sum_{x\sim I_j}J^\beta(b_j*\phi_j)$ and $F_2(x)=\sum_{x\not\sim I_j}J^\beta(b_j*\phi_j)$. Here we write $x\sim I_j$, which means that $x\in I_j$ or $x\in I_{j'}$ for some $I_{j'}$ such that  $I_j\cap I_{j'}\neq\emptyset$. The notation $x\not\sim I_j$ means the opposite of the previous notation. If we denote by $k_\beta$ the right-convolution kernel associated to $J^\beta$ we can write $F_1(x)=\sum_{x\sim I_j}b_j*\phi_j*k_\beta(x)$ and $F_2(x)=\sum_{x\not\sim I_j}b_j*\phi_j*k_\beta(x)$. Let us prove the estimate $\|F_2\|^2_{L^2}\lesssim\alpha\|f\|_{L^1}$. Observe that
\begin{equation*}
\begin{aligned}
    \int_{G}|\sum_{x\not\sim I_j}b_j*\phi_j*k_\beta(x)|\diff{x}&\leq\sum_{x\not\sim I_j}\int_{G}|b_j*\phi_j*k_\beta(x)|\diff{x}\\
    &\leq\sum_{j}\|b_j*\phi_j*k_\beta\|_{L^1}\\
    &\leq\sum_{j}\|b_j\|_{L^1}\|\phi_j*k_\beta\|_{L^1}.
\end{aligned}
\end{equation*}
Note that $J^\beta$ is a pseudo-differential operator of order $\beta$, and consequently, by Proposition \ref{kernel} its kernel satisfies for $x\in G\setminus\{e\},$ the following inequality
\begin{equation*}
    |k_\beta(x)|\lesssim|x|^{-(\beta+n)},\,\quad 0<|x|\lesssim1.
\end{equation*} 
Let us show that $\|k_\beta\|_{L^1}<\infty$. We have $\exp^{-1}\colon B(e,r)\to B(0,r')$ is a diffeomorphism for some $r,r'\in(0,1)$. Then,
\begin{equation*}
    \|k_\beta\|_{L^1}\lesssim\int_{|x|<r}|x|^{-(n+\beta)}\diff{x}+\int_{|x|\geq r}|x|^{-(n+\beta)}\diff{x}.
\end{equation*}
We now show that both integrals are finite. For the first integral we have
\begin{equation*}
    \int_{|x|<r}|x|_{geod}^{-(n+\beta)}\diff{x}\asymp\int_{|x|<r'}|x|^{-(n+\beta)}_{eucld}\diff{x}<\infty.
\end{equation*}
The last integral is less than infinity because $\beta<0$, or $n>n+\beta$. For the second integral we note that
\begin{equation*}
    \int_{\{x\in G:|x|\geq r\}}|x|^{-(n+\beta)}\diff{x}\leq r^{-(n+\beta)}\int_{G}dx<\infty.
\end{equation*}
This means that
\begin{equation*}
    \|\phi_j*k_\beta\|_{L^1}\lesssim\|\phi_j\|_{L^1}\|k_\beta\|_{L^1}=\|k_\beta\|_{L^1}<\infty.
\end{equation*}
Finally we get
\begin{equation*}
    \|F_2\|_{L^1}\lesssim\sum_{j}\|b_j\|_{L^1}\lesssim\|f\|_{L^1}.
\end{equation*}
From the inequality $\|F_2\|^{2}_{L^2}\leq\|F_2\|_{L^1}\|F_2\|_{L^\infty}$ it is enough to show that $\|F_2\|_{L^\infty}\lesssim\gamma\alpha$. Observe that if we consider $j$ such that $x\not\sim I_j$ then
\begin{equation*}
\begin{aligned}
    |b_j*\phi_j*k_\beta(x)|&\leq\int_{I_j}|\phi_j*k_\beta(y^{-1}x)||b_j(y)|\diff{y}\leq\sup_{y\in I_j}|\phi_j*k_\beta(y^{-1}x)|\int_{I_j}|b_j(y)|\diff{y}\\
    &=\sup_{y\in I_j}|\phi_j*k_\beta(y^{-1}x)|\cdot|I_j|\cdot\frac{1}{|I_j|}\int_{I_j}|b_j(y)|\diff{y}.
\end{aligned}
\end{equation*}
Notice that the supremum of the above expression indeed exists, because
\begin{equation*}
    |\phi_j*k_\beta(y^{-1}x)|\leq(\sup|\phi_j|)\cdot\|k_\beta\|_{L^1}<\infty.
\end{equation*}
Now we follow the observation in \cite[Page 15]{5} that for $j$ with $x\not\sim I_j$, one has  that $\phi_j*k_\beta(y^{-1}x)$ is essentially constant over the ball $I_j=B(x_j,r_j)$ and we can estimate
\begin{equation*}
    \sup_{y\in I_j}|\phi_j*k_\beta(y^{-1}x)|\cdot|I_j|\lesssim\int_{I_j}|\phi_j*k_\beta(y'^{-1}x)|\diff{y'}.
\end{equation*}
Furthermore, observe that the positivity of the kernel $k_\beta$, and of $\phi_j$ leads to
\begin{equation*}
\begin{aligned}
    \int_{I_j}|&\phi_j*k_\beta(y'^{-1}x)|\diff{y'}\frac{1}{|I_j|}\int_{I_j}|b_j(y)|\diff{y}\\
    &=\int_{G}|\phi_j*k_\beta(y'^{-1}x)|\left(\frac{1}{|I_j|}\int_{I_j}|b_j(y)|\diff{y}\right)1_{I_j}(y')\diff{y'}\\
    &=\left(\frac{1}{|I_j|}\int_{I_j}|b_j(y)|\diff{y}\cdot1_{I_j}\right)*\phi_j*k_\beta(x).
\end{aligned}
\end{equation*}
Therefore, we obtain
\begin{equation*}
\begin{aligned}
    |F_2(x)|&\leq\sum_{j:x\not\sim I_j}|b_j*\phi_j*k_\beta(x)|\lesssim\sum_{j:x\not\sim I_j}\left(\frac{1}{|I_j|}\int_{I_j}|b_j(y)|\diff{y}\cdot1_{I_j}\right)*\phi_j*k_\beta(x)\\
    &\lesssim\sum_{j:x\not\sim I_j}\gamma\alpha\cdot1_{I_j}*\phi_j*k_\beta(x)=\int_{G}\sum_{j:x\not\sim I_j}\gamma\alpha\cdot1_{I_j}*\phi_j(z)k_\beta(z^{-1}x)\diff{z}\\
    &\lesssim\gamma\alpha\|k_\beta\|_{L^1}\left\|\sum_{j:x\not\sim I_j} 1_{I_j} * \phi_j \right\|_{L^\infty}.
\end{aligned}
\end{equation*}
 By observing that the supports of the functions $1_{I_j}*\phi_j$ have bounded overlaps we have that $\left\|\sum_{j:x\not\sim I_j} 1_{I_j} * \phi_j \right\|_{L^\infty}<\infty$, and that $\|F_2\|_{L^\infty}\lesssim\gamma\alpha$. Finally, it remains to show that $\|F_1\|^2_{L^2}\lesssim\alpha\|f\|_{L^1}$. Let us define
\begin{equation*}
    F^{j}_1(x)=\begin{cases}
   b_j*\phi_j*k_\beta(x), \hspace{10pt}x\in I_j \\
      0,\hspace{60pt} \text{otherwise}.
\end{cases}
\end{equation*}
Then $F_1=\sum_{j}F^{j}_{1}$. Because of the finite overlapping of the balls $I_j$'s, there exists a $M_0\in\mathbb{N}$, such that for any $x\in G$, $F^j(x)\not=0$, for at most $M_0$ values of $j,$ we have 
\begin{equation*}
\begin{aligned}
    \int_{G}|F_1(x)|^2\diff{x}&\leq M_0\sum_{j}\int_{G}|F_1^j(x)|^2\diff{x}\\
    &=M_0\sum_{j}\int_{I_j}|b_j*\phi_j*k_\beta(x)|^2\diff{x}\\
    &\leq M_0\sum_{j}\|b_j\|^2_{L^1}\|\phi_j*k_\beta\|^2_{L^2}\lesssim\sum_{j}\alpha^2|I_j|^2\|\phi_j*k_\beta\|^2_{L^2}.
\end{aligned}
\end{equation*}
We have to compute $\|\phi_j*k_\beta\|_{L^2(G)}^2$. Notice that $\operatorname{supp}(\phi_j)\subseteq B(e,c)\subseteq B(e,r)$, which allows us to use the fact that the map $\exp^{-1}\colon B(e,r)\to B(0,r')$ is a diffeomorphism to make a local calculation. That is also the reason why we made the choice of this $c$ earlier. A similar calculation as in \cite[Theorem 1.1]{5} shows that
\begin{equation*}
    \|\phi_j*k_\beta\|_{L^2}^2\lesssim|I_j|^{-1}.
\end{equation*}
 Finally, we can show that
 \begin{equation*}
\begin{aligned}
    \|F_1\|^2_{L^2}&\leq\sum_{j}\alpha^2|I_j|^2\|\phi_j*k_\beta\|^2_{L^2}\lesssim\sum_{j}\alpha^2|I_j|^2|I_j|^{-1}=\alpha^2\sum_{j}|I_j|\\
    &\lesssim\alpha\gamma\|f\|_{L^1}.
\end{aligned}
 \end{equation*}The proof is complete.
\end{proof} Now, we investigate the $H^1$-$L^1$-boundedness of pseudo-differential operators and present an alternative proof that holds for the complete range $0 \leq \delta \le \rho \le 1, \delta \neq 1$. We make the following observation. Since $\exp^{-1}\colon B(e,r)\to B(0,r')$ is a diffeomorphism for some $0<r,r'<1$, it follows that $|B(e,R)|\leq CR^n$ for all $R\leq r$. Moreover, since $G$ is compact, we know that $\operatorname{diam}(G)<\infty$, implying the existence of $R_{max}>0$ such that for $R>R_{max}$, $G\subseteq B(e,R)$. For radii $R$ with $r<R\leq R_{max}$, we have the inequality
\begin{equation*}
    Cr^n = |B(e, r)| \leq |B(e, R)| \leq |B(e, R_{\max})|.
\end{equation*}
Therefore, we can choose a constant \(C' > 0\) such that
\begin{equation*}
   |B(e, R)| \leq C' R^n \quad \text{for all} \quad r < R \leq R_{\max}. 
\end{equation*}
The above observation will be used in the proof of Theorem \ref{H1:L1}. Next, we prove the following lemma, which will be useful in proving $H^1$-$L^1$-boundedness. 
\begin{lemma}
    Let $G$ be a compact Lie group of topological dimension $n\geq 1$.  Let $T\in\Psi^{m}_{\rho,\delta}(G)$, with $m=-\frac{n}{2}(1-\rho)$, $0\leq\delta\leq\rho\leq 1$, $\rho\neq0,\;$$\delta\neq 1$.
    Then $T$ is a continuous mapping
    \begin{itemize}
        \item from $L^2(G)$ to $L^{2/\rho}(G)$,
        \item from $L^{2/(2-\rho)}(G)$ to $L^{2}(G)$.
    \end{itemize}
\end{lemma}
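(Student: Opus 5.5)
We factor $T$ through a Bessel potential and treat the two factors separately. Set $\beta=-\frac{n}{2}(1-\rho)$ and write
\[
T=(TJ^{-\beta})J^{\beta},
\]
where $J^\beta=(1+\mathcal{L}_G)^{\beta/2}$. By the composition calculus in the global classes of \cite{17}, $TJ^{-\beta}$ has symbol in $S^{0}_{\rho,\delta}(G)$: $J^{-\beta}$ is a Fourier multiplier with symbol $\langle\zeta\rangle^{-\beta}I_{d_\zeta}$ in $S^{-\beta}_{1,0}$, and right composition with a multiplier simply multiplies the symbol. Proposition \ref{L2} then makes $TJ^{-\beta}$ bounded on $L^2(G)$. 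Both assertions therefore reduce to mapping properties of $J^\beta$, a convolution operator with right-convolution kernel $k_\beta$.

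For the second assertion we need $J^\beta\colon L^{2/(2-\rho)}(G)\to L^2(G)$; combined with the $L^2$-boundedness of $TJ^{-\beta}$ this gives $T\colon L^{2/(2-\rho)}\to L^2$. This is a Hardy--Littlewood--Sobolev inequality on $G$. Proposition \ref{kernel}, applied with order $\beta$ and $\rho=1$, gives
\[
|k_\beta(x)|\lesssim |x|^{-(n+\beta)}=|x|^{-n+\frac{n}{2}(1-\rho)},\qquad n+\beta>0.
\]
This holds for $\rho<1$; if $\rho=1$ then $\beta=0$ and there is nothing to prove. The kernel $k_\beta$ therefore lies in weak $L^{s}(G)$ with
\[
\frac1s=\frac{n+\beta}{n}=1-\frac{1-\rho}{2}.
\]
For this we use the local volume bound $|B(e,R)|\lesssim R^n$ noted above together with the boundedness of $k_\beta$ away from $e$. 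Young's inequality in its weak form (O'Neil) on the unimodular group $G$ then gives $\|f*k_\beta\|_{L^q}\lesssim\|f\|_{L^p}$ whenever $1/p+1/s=1+1/q$ with $1<p<q<\infty$. Taking $q=2$ gives $1/p=\frac12+\frac{1-\rho}{2}=\frac{2-\rho}{2}$, exactly as required. An alternative is to cite HLS on compact manifolds via local coordinates, using \eqref{ineq} and a partition of unity to compare with the Euclidean Riesz potential.

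For the first assertion we write $T=J^\beta(J^{-\beta}T)$, where $J^{-\beta}T\in\Psi^0_{\rho,\delta}$ again by the composition formula and is $L^2$-bounded by Proposition \ref{L2}. By the same HLS argument with $q=2/\rho$ and $p=2$ (check: $\frac12+1-\frac{1-\rho}{2}=1+\frac{\rho}{2}$), $J^\beta$ maps $L^2\to L^{2/\rho}$. Alternatively, the first assertion follows from the second by duality, since $T^*\in\Psi^m_{\rho,\delta}$ for $\delta<\rho$. The direct factorization avoids the adjoint calculus, which is delicate when $\delta=\rho$, so we prefer it.

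The main obstacle is justifying the left composition $J^{-\beta}T\in\Psi^{0}_{\rho,\delta}$ when $\delta=\rho$, because the asymptotic expansion of the composition is not available in that case. We would first try to avoid this by using only right compositions. We write $T=(TJ^{-\beta})J^\beta$ for the second claim, which is exact at the symbol level. For the first claim we then argue by duality on $L^2$ pairings: $T^*=J^{\beta}(TJ^{-\beta})^*$, and $(TJ^{-\beta})^*$ is $L^2$-bounded because its adjoint is. Hence $T^*\colon L^{2/(2-\rho)}\to L^2$ follows from the HLS bound for $J^\beta$ (which is self-adjoint and has a positive kernel) after dualizing once more. Consequently only Proposition \ref{L2}, the exact right-multiplier identity, and HLS are needed.
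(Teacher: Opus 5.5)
Your core argument matches the paper's. You factor through the Bessel potential $J^{\beta}$ with $\beta=-\frac n2(1-\rho)$, use Proposition~\ref{L2} for the order-zero factor, and use Hardy--Littlewood--Sobolev for $J^\beta$. Your derivation of HLS from $|k_\beta(x)|\lesssim|x|^{-(n+\beta)}$ via O'Neil's weak-type Young inequality is correct and more self-contained than the paper's citation: with $1/s=(1+\rho)/2$ you get exactly $(p,q)=(2/(2-\rho),2)$ and $(2,2/\rho)$. The second bullet via the exact right composition $T=(TJ^{-\beta})J^{\beta}$ is fine.

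The error is in your last paragraph, where you claim the first bullet follows from right compositions alone.
\begin{itemize}
\item What your factorization actually gives: from $T^*=J^\beta(TJ^{-\beta})^*$ and the $L^2$-boundedness of $(TJ^{-\beta})^*$ you get $T^*\colon L^2\to L^{2/\rho}$, because the Bessel factor acts last. Dualizing gives $T\colon L^{2/(2-\rho)}\to L^2$, which is the second bullet again, not the first.
\item What the first bullet would require from it: the first bullet is equivalent to $T^*\colon L^{2/(2-\rho)}\to L^2$. From this factorization that would need $(TJ^{-\beta})^*$ bounded on $L^{2/(2-\rho)}$, i.e.\ the order-zero operator $TJ^{-\beta}\in\Psi^0_{\rho,\delta}$ bounded on $L^{2/\rho}$. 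That is false in general for $\rho<1$: take $TJ^{-\beta}$ to be an oscillating multiplier such as $e^{i\mathcal{L}_G^{(1-\rho)/2}}$.
\item The direct attempt fails for the same reason: writing $T=(TJ^{-\beta})J^\beta$ with $J^\beta\colon L^2\to L^{2/\rho}$ also needs $TJ^{-\beta}$ bounded on $L^{2/\rho}$.
\end{itemize}

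So for $L^2\to L^{2/\rho}$ the smoothing factor must stand to the left of $T$. You genuinely need $J^{-\beta}T$, or equivalently its adjoint $T^*J^{-\beta}$, to be bounded on $L^2$; that is, you need the left composition or the adjoint within the $(\rho,\delta)$ calculus. This is exactly what the paper uses when it asserts that $J^{n(1-\rho)/2}T$ has order $0$.

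Keep your third paragraph and justify $J^{-\beta}T\in\Psi^0_{\rho,\delta}$ from the global calculus, including the case $\delta=\rho$, or prove its $L^2$-boundedness directly. Drop the claim that Proposition~\ref{L2}, the right-multiplier identity and HLS suffice; that claim is false.
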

\begin{proof}
    Let $J$ be the Bessel potential. Notice that $J^{n(1-\rho)/2}T$ and $TJ^{n(1-\rho)/2}$ are bounded on $L^2(G)$ because they have order 0, so we can apply Proposition \ref{L2}. By the Hardy-Littlewood-Sobolev inequality (see e.g. \cite{HLS}), we can conclude that $J^{-n(1-\rho)/2}$ is a continuous mapping from $L^2(G)$ into $L^{2/\rho}(G)$ and from $L^{2/2-\rho}(G)$ into $L^2(G)$. Thus the lemma follows by composition. 
\end{proof}
\begin{theorem}\label{H1:L1}
    Let $G$ be a compact Lie group of topological dimension $n\geq 1$. Let $T\in\Psi^{m}_{\rho,\delta}(G)$, with \[
m = -\frac{n}{2}(1-\rho),
\qquad 0 \le \delta \le \rho \le 1,\ \rho\neq0,\;\delta \neq 1.
\]
Then, $T$ is a continuous mapping from $H^1(G)$ to $L^{1}(G)$.
\end{theorem}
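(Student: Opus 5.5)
By the atomic definition of $H^1(G)$ and the linearity of $T$, it suffices to prove that $\|Ta\|_{L^1(G)}\le C$ for every $(1,\infty)$-atom $a$, with $C$ independent of $a$. Indeed, for $f=\sum_j c_ja_j$ we then get $\|Tf\|_{L^1}\le\sum_j|c_j|\,\|Ta_j\|_{L^1}\le C\sum_j|c_j|$. Passing from atoms to general $f$ needs a small continuity argument. The series converges in $L^1$, and on each atom $T$ agrees with its $L^2$ extension. So I would first prove the bound on finite sums and then pass to the limit using the $L^2$-boundedness of $T$ (Proposition \ref{L2}) together with a density argument.

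Fix an atom $a$ supported in $B=B(z,R)$, with $\|a\|_{L^\infty}\le |B|^{-1}$ and $\int a=0$. I will treat two regimes of $R$.

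\textbf{Case $R\ge 1$.} Here $|B|\gtrsim 1$, because $|B(z,R)|\ge|B(z,1)|$ and the latter is a fixed positive constant. Hence $\|a\|_{L^2}\le |B|^{-1/2}\lesssim 1$. Since $|G|<\infty$, the $L^2$-boundedness of $T$ gives $\|Ta\|_{L^1}\le |G|^{1/2}\|Ta\|_{L^2}\lesssim 1$. Alternatively, one could split the integral at $B(z,2R)$ and use the first bullet of Theorem \ref{kernel:lemma}; either route works.

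\textbf{Case $R<1$.} I split
\[
\|Ta\|_{L^1}=\int_{B(z,2R^\rho)}|Ta|\,dx+\int_{B(z,2R^\rho)^c}|Ta|\,dx=I_1+I_2 .
\]
For $I_2$ I use the cancellation $\int a=0$ to write $Ta(x)=\int_B (K(x,y)-K(x,z))a(y)\,dy$. Fubini and the second bullet of Theorem \ref{kernel:lemma} then give
\[
I_2\le \int_B |a(y)|\int_{B(z,2R^\rho)^c}|K(x,y)-K(x,z)|\,dx\,dy\le C\|a\|_{L^1}\le C .
\]
For $I_1$, when $0<\rho<1$, I apply H\"older with exponents $2/\rho$ and its conjugate $2/(2-\rho)$:
\[
I_1\le |B(z,2R^\rho)|^{1-\rho/2}\,\|Ta\|_{L^{2/\rho}}\lesssim |B(z,2R^\rho)|^{1-\rho/2}\,\|a\|_{L^2},
\]
where the last step uses the preceding Lemma ($T\colon L^2\to L^{2/\rho}$). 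The volume bound $|B(e,s)|\le C's^n$, observed above for all $s\le R_{\max}$, gives $|B(z,2R^\rho)|^{1-\rho/2}\lesssim R^{\rho n(1-\rho/2)}$. Moreover $\|a\|_{L^2}\le|B|^{-1/2}\lesssim R^{-n/2}$; this needs the lower volume bound $|B(z,R)|\gtrsim R^n$ for $R<1$, which follows from the exponential chart for small $R$ and from compactness for the remaining range. The resulting exponent is
\[
\rho n-\tfrac{\rho^2n}{2}-\tfrac n2=-\tfrac n2(1-\rho)^2\le 0 ,
\]
so this estimate alone is not uniform. This is the main obstacle: the plain $L^2$ route loses a factor $R^{-n(1-\rho)^2/2}$.

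To repair $I_1$, I would apply H\"older on the input side instead, using the second mapping property of the Lemma, $T\colon L^{2/(2-\rho)}\to L^2$, which exploits the small support of $a$. This gives
\[
I_1\le |B(z,2R^\rho)|^{1/2}\|Ta\|_{L^2}\lesssim R^{\rho n/2}\,\|a\|_{L^{2/(2-\rho)}} .
\]
Since $\|a\|_{L^{2/(2-\rho)}}\le |B|^{-1}|B|^{(2-\rho)/2}=|B|^{-\rho/2}\lesssim R^{-\rho n/2}$, the two powers cancel and $I_1\lesssim 1$. This is the route I would actually take; the $L^2\to L^{2/\rho}$ variant above only serves to show why the choice of exponent matters.

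The case $\rho=1$ needs no separate treatment: the same argument goes through with $2R^\rho=2R$, and the Lemma reduces to $L^2$-boundedness. Combining the two cases gives $\|Ta\|_{L^1}\le C$ uniformly over atoms, which proves the theorem.
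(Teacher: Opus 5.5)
Your proposal is correct and follows the paper's proof. It uses the same reduction to $(1,\infty)$-atoms and, for $R<1$, the same split at $B(z,2R^\rho)$: $I_1$ is bounded via Hölder on $B(z,2R^\rho)$ and $T\colon L^{2/(2-\rho)}\to L^2$, and $I_2$ via cancellation plus the second bullet of Theorem \ref{kernel_estimates}. The only differences are minor: for $R\ge 1$ your global $L^2$ bound is slightly simpler than the paper's split at $B(z,2R)$ with the first kernel bullet, and your remark on extending from atoms to all of $H^1(G)$ addresses a point the paper leaves implicit.
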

\begin{proof}
    Let $a$ be an $(1,\infty)$-atom. We have to show that $\|Ta\|_{L^1}\lesssim 1$. By the definition of an atom, we know that there exists some ball $B(z,R)$ such that
    \begin{equation*}
        \operatorname{supp}(a)\subseteq B(z,R),\hspace{5pt}\|a\|_{L^\infty}\leq|B(z,R)|^{-1},\hspace{5pt}\int_{B(z,R)}a(x)\diff{x}=0.
    \end{equation*}
    First suppose that $R<1$. Then we have
    \begin{equation*}
        \int_{G}|Ta(x)|\diff{x}\leq\int_{B(z,2R^\rho)}|Ta(x)|\diff{x}+\int_{G\setminus B(z,2R^\rho)}|Ta(x)|\diff{x}=I_1+I_2,
    \end{equation*}
    where $I_1=\int_{B(z,2R^\rho)}|Ta(x)|\diff{x}$ and $I_2=\int_{G\setminus B(z,2R^\rho)}|Ta(x)|\diff{x}$. Let us estimate both integrals separately. Using H\"older's inequality, the continuity of $T$ from $L^{2/(2-\rho)}(G)$ to $L^{2}(G)$, and the above assumptions on $a$, we obtain that
     \begin{equation*}
        \begin{aligned}
        I_1&\leq\left(\int_{B(z,2R^\rho)}\diff{x}\right)^{\frac{1}{2}}\left(\int_{G}|Ta(x)|^2\diff{x}\right)^{\frac{1}{2}}\\
        &\leq|B(z,2R^\rho)|^{\frac{1}{2}}\cdot\left(\int_{B(z,R)}|a(x)|^{\frac{2}{2-\rho}}\diff{x}\right)^{\frac{2-\rho}{2}}\\
        &\leq|B(z,2R^\rho)|^{\frac{1}{2}}\cdot|B(z,R)|^{-\frac{\rho}{2}}=|B(e,2R^\rho)|^{\frac{1}{2}}\cdot|B(e,R)|^{-\frac{\rho}{2}}\leq C.
    \end{aligned}
    \end{equation*}
    Since the mean of $a$ is zero and $\operatorname{supp}(a)\subseteq B(z,R)$, we can write
   \begin{equation*}
    \begin{aligned}
        Ta(x)&=\int_{B(z,R)}K(x,y)a(y)\diff{y}\\
        &=\int_{B(z,R)}(K(x,y)-K(x,z))a(y)\diff{y}.
    \end{aligned}
    \end{equation*}
    Therefore we estimate 
    \begin{equation*}
    \begin{aligned}
        I_2&\leq\int_{G\setminus B(z,2R^\rho)}\int_{B(z,R)}|K(x,y)-K(x,z)|\cdot|a(y)|\diff{y}\diff{x}\\
        &\lesssim\int_{B(z,R)}|a(y)|\diff{y}\lesssim\|a\|_{L^{\infty}}|B(z,R)|\lesssim 1.
    \end{aligned}
    \end{equation*}
    In the above we have used the kernel condition in Theorem \ref{kernel_estimates} for $R<1$, and the bound $\|a\|_{L^{\infty}}\leq |B(z,R)|^{-1}$. Now suppose $R\geq 1$. We do a similar estimation,
    \begin{equation*}
        \int_{G}|Ta(x)|\diff{x}\leq\int_{B(z,2R)}|Ta(x)|\diff{x}+\int_{G\setminus B(z,2R)}|Ta(x)|\diff{x}=I_1+I_2,
    \end{equation*}
    where $I_1=\int_{B(z,2R)}|Ta(x)|\diff{x}$ and $I_2=\int_{G\setminus B(z,2R)}|Ta(x)|\diff{x}$. We estimate $I_1$ by using Hölder's inequality, the $L^2$-continuity of $T$, and the above conditions on the function $a$.
     \begin{equation*}
        \begin{aligned}
        I_1&\leq\left(\int_{B(z,2R)}\diff{x}\right)^{\frac{1}{2}}\left(\int_{G}|Ta(x)|^2\diff{x}\right)^{\frac{1}{2}}\\
        &\leq|B(z,2R)|^{\frac{1}{2}}\cdot\left(\int_{B(z,R)}|a(x)|^2\diff{x}\right)^{\frac{1}{2}}\\
        &\leq|B(z,2R)|^{\frac{1}{2}}\cdot|B(z,R)|^{-\frac{1}{2}}=|B(e,2R)|^{\frac{1}{2}}\cdot|B(e,R)|^{-\frac{1}{2}}\leq C.
    \end{aligned}
    \end{equation*}
    Finally we estimate $I_2$,
   \begin{equation*}
    \begin{aligned}
        I_2&\leq\int_{G\setminus B(z,2R)}\int_{B(z,R)}|K(x,y)-K(x,z)|\cdot|a(y)|\diff{y}\diff{x}\\
        &\lesssim\int_{B(z,R)}|a(y)|\diff{y}\lesssim\|a\|_{L^{\infty}}|B(z,R)|\lesssim 1.
    \end{aligned}
    \end{equation*}
    In the above we used the kernel condition in Theorem \ref{kernel_estimates} for $R\geq 1$, and the bound $\|a\|_{L^{\infty}}\leq |B(z,R)|^{-1}$. The proof is complete. 
\end{proof}
\begin{theorem}\label{L:infty:BMO}
     Let $G$ be a compact Lie group of topological dimension $n\geq 1$. Let $T\in\Psi^{m}_{\rho,\delta}(G)$, with\[
m = -\frac{n}{2}(1-\rho),
\qquad 0 \le \delta \le \rho \le 1,\ \rho\neq0,\;\delta \neq 1.
\]
Then, $T$ is a continuous mapping from $L^{\infty}(G)$ to $BMO(G)$.
\end{theorem}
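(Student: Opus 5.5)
The plan is a duality argument built on Theorem \ref{H1:L1} and the Fefferman--Stein duality $(H^1(G))'=BMO(G)$ from Section \ref{Section:2}. Fix $f\in L^\infty(G)$. By \eqref{BMOnormduality} it suffices to show that
\[
\Big|\int_G Tf(x)\,g(x)\,\diff{x}\Big|\lesssim \|f\|_{L^\infty}\|g\|_{H^1(G)}
\]
for $g$ in a dense subclass of $H^1(G)$, for instance finite linear combinations of atoms. Such $g$ are bounded, so every pairing below makes sense. Since $G$ is compact, $f\in L^2(G)$, and $Tf\in L^2(G)$ by Proposition \ref{L2}, so the pairing is well defined.

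The key step is to move $T$ to the other side: $\int_G Tf\, g = \int_G f\, T^*g$, where $T^*$ is the $L^2$-adjoint. Then
\[
\Big|\int_G f\,T^*g\Big|\le\|f\|_{L^\infty}\|T^*g\|_{L^1}\lesssim\|f\|_{L^\infty}\|g\|_{H^1},
\]
provided that $T^*:H^1(G)\to L^1(G)$ is bounded. Taking the supremum over $\|g\|_{H^1}=1$ then gives $\|Tf\|_{BMO}\lesssim\|f\|_{L^\infty}$.

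The main obstacle is justifying that $T^*$ is $H^1$--$L^1$ bounded. I would try two routes.

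\textbf{Route 1: symbolic calculus.} Invoke the global calculus of \cite{17}: for $0\le\delta<\rho\le1$ the adjoint of an operator in $\Psi^m_{\rho,\delta}(G)$ lies in the same class. Then Theorem \ref{H1:L1} applies to $T^*$ directly. This argument does not cover the endpoint $\delta=\rho$.

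\textbf{Route 2: rerun the proof of Theorem \ref{H1:L1} for $T^*$.} This avoids the adjoint calculus. The kernel of $T^*$ is $K^*(x,y)=\overline{K(y,x)}$. The local part $I_1$ only needs that $T^*$ maps $L^{2/(2-\rho)}(G)$ to $L^2(G)$, and this follows by duality from the preceding lemma ($T:L^2\to L^{2/\rho}$). The part $I_2$ needs the H\"ormander-type condition in the other variable,
\[
\int_{B(z,2R^\rho)^c}|K(y,x)-K(z,x)|\diff{x}\le C .
\]
To get it I would repeat the dyadic decomposition $K=\int_1^\infty K_t\,\frac{dt}{t}$ from the proof of Theorem \ref{kernel:lemma}. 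The changes are as follows:
\begin{itemize}
\item the $L^2$ estimates via Parseval are replaced by Plancherel in the $x$-variable of $K_t(y,x)$ for fixed $y$;
\item derivatives falling on the $x$-dependence of $\sigma_t$ cost $t^{\delta}$, which is harmless because $\delta\le\rho$;
\item the weighted Cauchy--Schwarz step and the split at $t=1/R$ carry over verbatim.
\end{itemize}
The case $R\ge1$ is immediate from the pointwise bound in Proposition \ref{kernel}.

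Once this transposed kernel estimate is in place, the atom argument of Theorem \ref{H1:L1} gives $\|T^*a\|_{L^1}\lesssim1$ uniformly over atoms $a$. Summing over an atomic decomposition, and using that the decomposition is finite for our dense class, yields the $H^1$--$L^1$ bound for $T^*$ and hence the theorem.
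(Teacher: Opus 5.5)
Your proposal takes the paper's route. The paper's proof is the one-line ``duality argument and Theorem~\ref{H1:L1}'', which tacitly applies the $H^1$--$L^1$ bound to $T^*$. You make that step explicit: for $\delta<\rho$ you use the adjoint calculus of \cite{17}; for $\delta=\rho$ you use the dual bound $T^*\colon L^{2/(2-\rho)}(G)\to L^2(G)$ plus the transposed condition $\int_{B(z,2R^\rho)^c}|K(y,x)-K(z,x)|\,dx\le C$, where Plancherel in $x$ with the symbol's spatial argument frozen is legitimate. This is more complete than the paper. The one piece you cannot carry over verbatim is the paper's bound $t^{-1}$ on $1\le t\le 1/R$. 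It rests on Lemma~\ref{smoothing}, whose $O(t^r)$ decay for every $r$ would force $\sigma$ to be smoothing. Use the cancellation instead: $\|\zeta(z^{-1}y)-I\|_{op}\lesssim R\langle\zeta\rangle$ and $\|\sigma_t(y,\zeta)-\sigma_t(z,\zeta)\|_{op}\lesssim R\,t^{m+\delta}$ (with an extra factor $t^{-\rho|\alpha|}$ after applying $\Delta_q^\alpha$) bound each dyadic piece by $\lesssim Rt$, and $\int_1^{1/R}Rt\,\frac{dt}{t}\le 1$.
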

\begin{proof}
  By the duality argument and Theorem \ref{H1:L1} the result follows.
\end{proof}

\section{Applications to subelliptic estimates on $SU(2)\cong \mathbb{S}^3$}\label{applications}
In this section we consider an application of our main Theorem \ref{weak}.
Let us consider the compact Lie group $G=SU(2)\cong \mathbb{S}^3$ and let us consider a basis of its Lie algebra $\mathfrak{g}=\mathfrak{su}(2).$ Let $X,$ $Y, Z=[X,Y]=XY-YX,$ be three left-invariant vector fields, orthonormal with respect to the Killing form, and let us consider $T$ to be: 
\begin{itemize}
    \item (i) the sub-Laplacian in $SU(2),$ namely, $T=\mathcal{L}_{sub}=X^2+Y^2,$ or,
    \item  (ii) $T=Z-\mathcal{L}_{sub}$ is the analogue of the heat operator in $SU(2).$
\end{itemize}  We are interested in obtaining {\it end-point $L^1$-subelliptic estimates} for $T,$ of the form (and then to prove that for any $N>0,$ there exists $C_N>0,$ such that),
\begin{equation}\label{subelliptic}
    \Vert u\Vert_{L^{1,\infty}(SU(2))}\leq C \Vert Tu\Vert_{W^{1,s}}+C_N\Vert u\Vert_{W^{1,-N}}.
\end{equation}
Here, we have denoted by $W^{p,\beta}$ the space of distributions $u$ on $G=SU(2),$ such that 
\begin{equation}
    \Vert u\Vert_{W^{p,\beta}}=\Vert J^\beta u\Vert_{L^p},\,\,1\leq p\leq \infty,
\end{equation} with $J^\beta=(1+\mathcal{L})^\frac{\beta}{2},$ being the Bessel potential of order $\beta$ associated to the positive Laplacian $\mathcal{L}=-X^2-Y^2-Z^2.$

Our goal is to compute the loss of regularity $s=s_T$  in \eqref{subelliptic}, in order to use such an {\it a-priori estimate,} to understand the subelliptic problem
\begin{equation}\label{IVP} \begin{cases}Tu=f ,& \text{ }
\\u,f\in \mathscr{D}'(SU(2)):=(C^\infty(SU(2)))'. & \text{ } \end{cases}
\end{equation} From the point of view of the theory of pseudo-differential operators, for obtaining the estimate \eqref{subelliptic} it is enough to understand the mapping properties of  the {\it parametrix } $T^{\sharp}$ of $T,$ and then the problem \eqref{subelliptic} falls in the terminology of the harmonic analysis, to prove the weak (1,1)-boundedness of pseudo-differential operators on  $SU(2).$ Due to the fact that the principal symbol  of $T,$ in the sense of H\"ormander \cite{Hormander1985III} is not invariantly defined under changes of coordinates on $G=SU(2)\cong \mathbb{S}^3$, see \cite{9}, the classical machinery of the pseudo-differential calculus defined in terms of the Fourier transform on $\mathbb{R}^n$, and extended to compact manifolds by changes of coordinates, cannot be applied.

Among other things, for $1<p<\infty,$ the third author and Delgado have investigated  the $L^p$-subelliptic estimate,
\begin{equation}\label{subellipticLp}
    \Vert u\Vert_{L^{p}(SU(2))}\leq C \Vert Tu\Vert_{W^{p,s}}+C_N\Vert u\Vert_{W^{p,-N}},
\end{equation} computing $s$ in terms of $p>1.$ For $p=1$,
 we now have the following result.
\begin{theorem}\label{application}The following subelliptic estimates hold (that is, for any $N>0,$ there exists $C_N>0$ such that):
\begin{equation}
    \Vert u\Vert_{L^{1,\infty}}\leq C \Vert Tu\Vert_{W^{1,-\frac{1}{4}}}+C_N\Vert u\Vert_{W^{1,-N}},\quad  T=\mathcal{L}_{sub},
\end{equation} and,
\begin{equation}
    \Vert u\Vert_{L^{1,\infty}}\leq C \Vert Tu\Vert_{W^{1,-\frac{1}{4}}}+C_N\Vert u\Vert_{W^{1,-N}},\quad  T=Z-\mathcal{L}_{sub}.
\end{equation}
    
\end{theorem}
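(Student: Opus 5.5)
The plan is to reduce both estimates to Theorem \ref{weak} applied to a parametrix of $T$. First I would recall (from \cite{9}, where the $L^p$ version \eqref{subellipticLp} is obtained) that both $T=\mathcal{L}_{sub}$ and $T=Z-\mathcal{L}_{sub}$ on $SU(2)$ admit a parametrix $T^{\sharp}$ in the global class $\Psi^{-1}_{\frac12,0}(SU(2))$. The order $-1$ reflects the two-derivative subelliptic gain with loss $1/2$ in each, and $\rho=\tfrac12$, $\delta=0$ comes from the Hörmander condition of step two ($Z=[X,Y]$). This gives
\[
T^{\sharp}T=I+E, \qquad E\in\Psi^{-\infty}(SU(2)).
\]
I would take this symbolic construction, and the membership $T^\sharp\in S^{-1}_{1/2,0}(SU(2)\times\widehat{SU(2)})$, directly from \cite{9}. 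There the symbol of $\mathcal{L}_{sub}$ is diagonal in the standard basis of each representation space with entries $\asymp$ between $\ell$ and $\ell^2$, and inverting it gives the $(1/2,0)$-type estimates for the difference operators.

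Second, I would write $u=T^{\sharp}Tu-Eu$ and factor in the Bessel potential:
\[
u=(T^{\sharp}J^{-s})(J^{s}Tu)-Eu .
\]
Here $T^{\sharp}J^{-s}\in\Psi^{-1-s}_{\frac12,0}$. With $n=3$ and $\rho=\tfrac12$, the critical order in Theorem \ref{weak} is $-\tfrac n2(1-\rho)=-\tfrac34$. Choosing $s=-\tfrac14$ gives $-1-s=-\tfrac34$, so Theorem \ref{weak} yields
\[
\|T^{\sharp}J^{1/4}(J^{-1/4}Tu)\|_{L^{1,\infty}}\lesssim \|J^{-1/4}Tu\|_{L^1}=\|Tu\|_{W^{1,-\frac14}} .
\]
This step needs the composition $T^\sharp J^{-s}$ to stay in $\Psi^{-1-s}_{1/2,0}$. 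That holds because $J^{-s}$ is a Fourier multiplier in $S^{-s}_{1,0}$ and the global calculus of \cite{17} is closed under composition when $\delta<\rho$.

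Third, the smoothing remainder. Since $EJ^{N}\in\Psi^{-\infty}$, its kernel is smooth, so
\[
\|Eu\|_{L^{1,\infty}}\le\|EJ^{N}J^{-N}u\|_{L^1}\lesssim\|J^{-N}u\|_{L^1}=C_N\|u\|_{W^{1,-N}} .
\]
I would then combine the two pieces using the quasi-triangle inequality in $L^{1,\infty}$.

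The main obstacle is the first step: certifying that the parametrix really lies in $S^{-1}_{1/2,0}$ globally, including all difference-operator estimates. This is especially delicate for $Z-\mathcal{L}_{sub}$, where the symbol is not self-adjoint and invertibility must be checked on each representation. I expect to quote \cite{9} for this rather than redo it. The rest is just matching orders, which is what pins down the loss $s=-\tfrac14$.
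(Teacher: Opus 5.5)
Your proposal is correct and follows the paper's proof. The paper also uses a parametrix $T^{\sharp}\in\Psi^{-1}_{\frac{1}{2},0}(SU(2))$ with $T^\sharp T=I+R$, factors through $J^{1/4}=(1+\mathcal{L})^{1/8}$ so that $T^{\sharp}J^{1/4}$ has the critical order $-\frac{3}{4}$ for Theorem \ref{weak}, and bounds the smoothing remainder by $C_N\Vert u\Vert_{W^{1,-N}}$.

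The differences are minor. The paper takes the parametrix from \cite{19} rather than \cite{9}. Your added justifications (closure of the calculus under composition since $\delta<\rho$, the factorization $E=(EJ^{N})J^{-N}$, and the quasi-triangle inequality in $L^{1,\infty}$) make explicit steps the paper leaves implicit.
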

\begin{proof}
   Note that the parametrix $T^\sharp$ of $T,$ belongs to the class $\Psi^{-1}_{\frac{1}{2},0}(SU(2)),$ see \cite{19}. Note that there exists a regularising operator $R\in \Psi^{-\infty}(SU(2))=\bigcap_m \Psi^{m}_{1,0}(SU(2)),$ such that 
    \begin{equation}
        T^\sharp T u =u+Ru.
    \end{equation} In consequence
    \begin{equation}
        \Vert u+Ru\Vert_{L^{1,\infty}} =\Vert T^\sharp T u\Vert_{L^{1,\infty}}
        = \Vert T^\sharp(1+\mathcal{L})^{\frac{1}{8}} (1+\mathcal{L})^{-\frac{1}{8}}Tu\Vert_{L^{1,\infty}}.
    \end{equation} Since the operator $T^\sharp(1+\mathcal{L})^{  \frac{1}{8}}\in \Psi^{-\frac{3}{4}}_{\frac{1}{2},0}$ has order $m=-1+\frac{1}{4}=-3/4,$ and since $\rho=\frac{1}{2}$, and $\delta=0$, Theorem \ref{weak} implies that $T^\sharp(1+\mathcal{L})^{\frac{1}{8}}$ is of weak (1,1) type. In consequence
    \begin{equation}
        \Vert T^\sharp(1+\mathcal{L})^{\frac{1}{8}} (1+\mathcal{L})^{-\frac{1}{8}}Tu\Vert_{L^{1,\infty}}\leq \Vert  (1+\mathcal{L})^{-\frac{1}{8}}T u\Vert_{L^{1}}=\Vert Tu \Vert_{W^{1,-\frac{1}{4}}}.
    \end{equation} So, we have proved that 
    \begin{equation}
         \Vert u\Vert_{L^{1,\infty}}\leq \Vert Tu \Vert_{W^{1,-\frac{1}{4}}}+  \Vert Ru\Vert_{L^{1,\infty}}.
    \end{equation} On the other hand, since $R$ is smoothing, we have that, for every $N>0,$ there exists $C_N>0,$ such that
    \begin{equation}
        \Vert Ru\Vert_{L^{1,\infty}}\leq \Vert Ru\Vert_{L^{1}}\leq C_N\Vert u\Vert_{W^{1,-N}}.
    \end{equation}The proof is complete.   
\end{proof} We have the following result.
\begin{corollary} Let us consider $T$ as above acting on $C^\infty(SU(2))$ and let us consider the subelliptic problem,
    \begin{equation}\label{IVP} \begin{cases}Tu=f ,& \text{ }
\\u,f\in \mathscr{D}'(SU(2)):=(C^\infty(SU(2)))'. & \text{ } \end{cases}
\end{equation} If $f\in W^{1,-\frac{1}{4}},$ then $u\in L^{1,\infty}.$
\end{corollary}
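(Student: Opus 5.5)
The corollary should follow directly from the a-priori estimate in Theorem \ref{application}, applied to distributional solutions by going through the parametrix identity rather than the inequality itself. Since $u$ is only assumed to be in $\mathscr{D}'(SU(2))$, I would not plug $u$ into the estimate. Instead I would use the relation $T^\sharp T u = u + Ru$ from the proof of Theorem \ref{application}. This relation holds on distributions, because $T^\sharp$, $T$ and $R$ are pseudo-differential operators acting continuously on $\mathscr{D}'(SU(2))$. It gives
\begin{equation*}
u = T^\sharp f - Ru = \bigl(T^\sharp(1+\mathcal{L})^{\frac{1}{8}}\bigr)(1+\mathcal{L})^{-\frac{1}{8}}f - Ru .
\end{equation*}

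The first step is to handle the term $T^\sharp f$. By definition of $W^{1,-\frac14}$, the function $g:=(1+\mathcal{L})^{-\frac18}f$ belongs to $L^1(SU(2))$. As observed in the proof of Theorem \ref{application}, the operator $T^\sharp(1+\mathcal{L})^{\frac18}$ lies in $\Psi^{-\frac34}_{\frac12,0}(SU(2))$. Since $n=3$ and $\rho=\frac12$, the threshold is $-\frac{n}{2}(1-\rho)=-\frac34$, so Theorem \ref{weak} applies and gives $T^\sharp(1+\mathcal{L})^{\frac18}g\in L^{1,\infty}$ with norm $\lesssim \|f\|_{W^{1,-\frac14}}$.

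Here lies the main obstacle. Theorem \ref{weak} is first proved on a dense class of functions, so I need its weak-type extension to all of $L^1$ to agree with the distributional action of the operator. I would approximate $g$ in $L^1$ by smooth $g_k$. The images then converge in $\mathscr{D}'$, because the operator is continuous on distributions. They are also Cauchy in measure by the weak (1,1) bound. Passing to an a.e.-convergent subsequence, the two limits coincide, so the distribution $T^\sharp f$ is an $L^{1,\infty}$ function.

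The second step is the term $Ru$. Since $R\in\Psi^{-\infty}$, it maps $\mathscr{D}'(SU(2))$ into $C^\infty(SU(2))\subset L^\infty\subset L^{1,\infty}$, and $G$ has finite measure. Adding the two pieces with the quasi-triangle inequality in $L^{1,\infty}$ gives $u\in L^{1,\infty}$. The same argument works for both choices of $T$, since the only input used is $T^\sharp\in\Psi^{-1}_{\frac12,0}$.
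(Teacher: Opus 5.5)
\textbf{There is a genuine gap in the identification step.} Your parametrix identity on $\mathscr{D}'$, the order count $-1+\frac14=-\frac34=-\frac{n}{2}(1-\rho)$ and the treatment of $Ru$ are fine. The problem is the sentence ``passing to an a.e.-convergent subsequence, the two limits coincide.'' Write $A=T^\sharp(1+\mathcal{L})^{\frac18}$. From $Ag_k\to Ag$ in $\mathscr{D}'$ and $Ag_k\to h$ in measure you cannot conclude $Ag=h$, even if the convergence to $h$ is in the $L^{1,\infty}$ quasi-norm. The reason is that $L^{1,\infty}$ does not embed in $\mathscr{D}'$. For example, take smooth $s_k\geq 0$ with $\int_G s_k=1$, supported in balls shrinking to $e$, with the mass spread logarithmically (like $|x|^{-n}/\log(1/|x|)$ on the annulus $e^{-e^{k+1}}<|x|<e^{-e^{k}}$). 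Then $\|s_k\|_{L^{1,\infty}}\lesssim e^{-k}$ and $s_k\to 0$ a.e., yet $s_k\to\delta_e$ in $\mathscr{D}'$. Adding $s_k$ to $Ag_k$ changes the distributional limit but not the a.e.\ one. Identifying the limits needs uniform integrability of $(Ag_k)$, and Theorem \ref{weak} does not provide it.

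\textbf{The conclusion you draw is also stronger than a weak-type bound can give.} On the compact group, a distribution represented by a measurable function must be integrable (pair it with $\psi\equiv 1$). So ``the distribution $T^\sharp f$ is an $L^{1,\infty}$ function'' means $T^\sharp f\in L^1$. Your argument only uses $T^\sharp\in\Psi^{-1}_{\frac12,0}$. It would therefore prove, via the closed graph theorem (the graph of $A$ on $L^1$ is closed because $A$ is continuous on $\mathscr{D}'$), that every operator in $\Psi^{-\frac34}_{\frac12,0}(SU(2))$ is bounded on $L^1$. That fails at this critical order, which is exactly why the endpoint result is only of weak type.

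\textbf{Comparison with the paper.} The paper does not use the parametrix identity on distributions. It asserts $u\in C^\infty(SU(2))$ by H\"ormander's hypoellipticity theorem, deduces $u\in W^{1,-N}$, and inserts $u$ into the a priori estimate of Theorem \ref{application}. Your reluctance to do this is justified. Hypoellipticity gives $u\in C^\infty$ only when $f\in C^\infty$, since otherwise $f=Tu$ would be smooth. What is true is $u\in W^{2,-N}$ for some $N$, because distributions on a compact manifold have finite order. Applying the estimate to such a $u$ tacitly uses the same identification you are trying to prove, so the paper's route does not close your gap either.

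\textbf{What your argument does give, and what is still missing.} Once the identification claim is dropped, you have the density version of Theorem \ref{application}: $f\mapsto T^\sharp f$, defined for smooth $f$, extends to a bounded map $W^{1,-\frac14}\to L^{1,\infty}$. Your $L^{1,\infty}$-limit of $Ag_k$ is exactly that extension. The corollary can only be read in this sense unless more is known. To conclude literally that the distribution $u$ is a function, you need information specific to these two operators beyond membership in $\Psi^{-1}_{\frac12,0}$. An example would be $L^1$-boundedness of $T^\sharp(1+\mathcal{L})^{\frac18}$, which would have to come from the sub-Riemannian structure of the parametrix.
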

\begin{proof}
    Since $u\in D'(SU(2))$ and $T$ is hypoelliptic, in view of the sum of squares theorem due to H\"ormander \cite{13}, we have that $u\in C^\infty(SU(2)).$ In consequence, $u\in W^{2,-N}(SU(2))$ for any $N\in \mathbb{N}.$ Choosing an arbitrary $N\in \mathbb{N},$ we have that 
    \begin{equation}
        \Vert u\Vert_{  W^{1,-N} }\leq \Vert u\Vert_{  W^{2,-N} }<\infty.
    \end{equation} Since $u\in  W^{1,-N}  $ and $f=Tu\in W^{1,-\frac{1}{4}},$ the subelliptic estimate in Theorem \ref{application} implies that $u\in L^{1,\infty}.$ The proof is complete.
\end{proof}

\bibliographystyle{amsplain}

\begin{thebibliography}{99}
\bibitem{Weyl} Akylzhanov, R., Ruzhansky, M., Net spaces on lattices, Hardy-Littlewood type inequalities, and their converses. \textit{Eurasian Math. J.}, 8(3), 10–27, (2017).
    \bibitem{1} Alvarez, J., Hounie, J., Estimates for the kernel and continuity properties of pseudo-differential operators, \textit{Ark. Mat.} 28(1-2), 1--22, (1990).
    \bibitem{2} Alvarez, J., Milman, M., Vector-valued inequalities for strongly singular Calderón-Zygmund operators, \textit{Rev. Mat. Iberoamericana} 2, 405--426, (1986).
    \bibitem{3} Beals, R., $L^p$ and Holder estimates for pseudodifferential operators: necessary conditions, in \textit{Harmonic Analysis in Euclidean Spaces}, American Mathematical Society, Providence, RI, pp. 153--157, (1979).
    \bibitem{4} Beals, R., $L^p$ and Holder estimates for pseudodifferential operators: sufficient conditions, \textit{Ann. Inst. Fourier} 29(3), 239--260, (1979).
    \bibitem{CMM2009} Carbonaro, A., Mauceri, G., Meda, S., $H^1$ and $BMO$ for certain locally doubling metric measure spaces, \textit{Ann. Sc. Norm. Super. Pisa Cl. Sci.} (5) 8, 543--582, (2009).
    \bibitem{Cardona2014} Cardona, D. Weak type (1,1) bounds for a class of periodic pseudo-differential operators,
    \textit{J. Pseudo-Differ. Oper. Appl.}, Vol. 5(4), 507-515, (2014).
    \bibitem{Cardona:Martinez} Cardona, D., Martinez, M. A., Boundedness of pseudo-differential operators on the torus revisited, I. \textit{J. Math. Anal. Appl.,} 554, Vol. 2, No. 129959, (2025).  
    \bibitem{CR} Cardona, D., Ruzhansky, M., Subelliptic pseudo-differential operators and Fourier integral operators on compact Lie groups, to appear in \textit{MSJ Memoirs, Mathematical Society of Japan, Tokyo}, 180 pages. arXiv:2008.09651.
    \bibitem{5} Cardona, D., Ruzhansky, M., Oscillating singular integral operators on compact Lie groups revisited, \textit{Math. Z.} 303, 26 (2023).
    \bibitem{6} Chen, J., Fan, D., Central oscillating multipliers on compact Lie groups, \textit{Math. Z.} 267, 235--259, (2011).
    \bibitem{7} Coifman, R., Weiss, G., Analyse harmonique non-commutative sur certains espaces homogènes (French), Étude de certaines intégrales singulières, in \textit{Lecture Notes in Mathematics}, vol. 242, pp. v+160, Springer, Berlin--New York (1971).
    \bibitem{HLS} Coulhon, T., Saloff-Coste, L., Varopoulos, N., \textit{Analysis and Geometry on Groups}, Cambridge Tracts in Mathematics, 100, Cambridge University Press, Cambridge (1993).
    \bibitem{8} Dasgupta, A., Ruzhansky, M., Gevrey functions and ultradistributions on compact Lie groups and homogeneous spaces, \textit{Bull. Sci. Math.} 138, 756--782, (2014).
    \bibitem{9} Delgado, J., Ruzhansky, M., $L^p$-bounds for pseudo-differential operators on compact Lie groups, \textit{J. Inst. Math. Jussieu} 18(3), 531--559, (2019).
    \bibitem{10} Fefferman, C., Inequalities for strongly singular integral operators, \textit{Acta Math.} 24, 9--36, (1970).
    \bibitem{11} Fefferman, C., $L^p$ bounds for pseudo-differential operators, \textit{Israel J. Math.} 14, 413--417, (1973).
    \bibitem{12} Fischer, V., Intrinsic pseudo-differential calculi on any compact Lie group, \textit{J. Funct. Anal.} 268, 3404--3477, (2015).
    \bibitem{Hi} Hirschman, I., Multiplier transformations I, \textit{Duke Math. J.} 26, 222--242, (1956).
    \bibitem{Hormander1985III} H\"ormander, L. { The Analysis of the linear partial differential operators} Vol. III. Springer-Verlag, (1985)
    \bibitem{13} Hörmander, L., Pseudo-differential operators and hypoelliptic equations, in \textit{Proceedings Symposium on Singular Integrals}, American Mathematical Society, Providence, RI pp. 138--183, (1967).
    \bibitem{14} Hörmander, L., On the $L^2$-continuity of pseudo-differential operators, \textit{Communications on Pure and Applied Math.} 24, 529--535, (1971).
    \bibitem{15} Hounie, J., On the $L^2$-continuity of pseudo-differential operators, \textit{Communications in Partial Differential Equations} 11(7), 765--778, (1986).
    \bibitem{16} Journé, J. L., Calderón-Zygmund operators, pseudo-differential operators and the Cauchy integral of Calderón, \textit{Lecture Notes in Math.}, 994, Springer-Verlag (1983).
    \bibitem{17} Ruzhansky, M., Turunen, V., \textit{Pseudo-differential Operators and Symmetries: Background Analysis and Advanced Topics}, Birkhäuser, Basel (2010).
    \bibitem{18} Ruzhansky, M., Turunen, V., Global quantization of pseudo-differential operators on compact Lie groups, $\operatorname{SU}(2)$, 3-sphere, and homogeneous spaces, \textit{Int. Math. Res. Not. IMRN} 2013(11), 2439--2496, (2013).
    \bibitem{RT:JFA:2011} Ruzhansky, M., Turunen, V., Sharp Gårding inequality on compact Lie groups, \textit{J. Funct. Anal.} 260, 2881--2901, (2011).
    \bibitem{19} Ruzhansky, M., Turunen, V., and Wirth, J., Hormander class of pseudo-differential operators on compact Lie groups and global hypoellipticity, \textit{J. Fourier Anal. Appl.} 20, 476--499, (2014).
    \bibitem{20} Ruzhansky, M., Wirth, J., Global functional calculus for operators on compact Lie groups, \textit{J. Funct. Anal.} 267, 144--172, (2014).
    \bibitem{21} Ruzhansky, M., Wirth, J., On multipliers on compact Lie groups, \textit{Funct. Anal. Appl.} 47(1), 87--91, (2013).
    \bibitem{22} Ruzhansky, M., Wirth, J., $L^p$ Fourier multipliers on compact Lie groups, \textit{Math. Z.} 280(3--4), 621--642, (2015).
    \bibitem{23} Shubin, M. A., \textit{Pseudo-differential Operators and Spectral Theory}, second edition, Springer, Berlin, translated from the 1978 Russian original by Stig I. Andersson (2001).
    \bibitem{24} Stein, E. M., Interpolation of linear operators, \textit{Trans. Amer. Math. Soc.} 83, 482--492, (1956).
    \bibitem{25} Stein, E. M., \textit{Topics in Harmonic Analysis Related to the Littlewood-Paley Theory}, Vol. 63 of Annals of Mathematics Studies, Princeton University Press, New Jersey (1970).
    \bibitem{26} Stein, E. M., Weiss, G., \textit{Introduction to Fourier Analysis on Euclidean Spaces}, No. 32, Princeton Mathematical Series, New Jersey (1971).
    \bibitem{27} Tao, T., The weak-type (1,1) of Fourier integral operators of order $-(n-1)/2$, \textit{J. Aust. Math. Soc.} 76(1), 1--21, (2004).
    \bibitem{28} Taylor, M., \textit{Pseudodifferential Operators}, Princeton University Press, New Jersey (1981).
    \bibitem{Wg} Wainger, S., Special trigonometric series in $k$-dimensions, \textit{Mem. Amer. Math. Soc.} No. 59, 102 (1965).
    \bibitem{29} Weiss, N., $L^p$ estimates for bi-invariant operators on compact Lie groups, \textit{Amer. J. Math.} 94, 103--118, (1972).
    \bibitem{30} Zygmund, A., \textit{Trigonometric Series I and II}, third edition, Cambridge University Press, Cambridge (2003).
\end{thebibliography}

\end{document}